\newtheorem{theorem}{Theorem}[section]
\newtheorem{lemma}[theorem]{Lemma}
\newtheorem{proposition}{Proposition}
\newtheorem{remark}{Remark}
\newcommand{\ep}{\varepsilon}
\newenvironment{proof}
{$\!\!\!\!\!\!\!\!\!$\emph{Proof: }}
{{\null\hfill{$\Box$}\par}}
\newcommand{\uuepI}{\underline{u_\ep^I}}
\newcommand{\uvepI}{\underline{v_\ep^I}}
\newcommand{\udtuep}{\underline{\dt u_\ep}}
\newcommand{\udtuepInd}{\underline{\dt u_{\ep,\indx}}}
\newcommand{\udxuepInd}{\underline{\dpp_x u_{\ep,\indx}}}
\newcommand{\udxjuep}{\underline{\dpp_{x_j} u_\ep}}
\newcommand{\udxjuepI}{\dpp_{x_j}\underline{ u_\ep^I}}
\newcommand{\underxjuepI}{\underline{\dpp_{x_j} u_\ep^I}}
\newcommand{\udxjuepInd}{\underline{\dpp_{x_j} u_{\ep,\indx}}}
\newcommand{\IN}{\mathbb{N}}
\newcommand{\IR}{\mathbb{R}}
\newcommand{\IC}{\mathbb{C}}
\newcommand{\BB}{B}
\newcommand{\Bic}{\mathcal{\BB}}
\newcommand{\CC}{C}
\newcommand{\Co}{\mathcal\CC}
\newcommand{\EE}{E}
\newcommand{\nrj}{\mathcal\EE}
\newcommand{\ron}{\, o \,}
\newcommand{\FF}{F}
\newcommand{\Fo}{\mathcal{\FF}}
\newcommand{\MM}{M}
\newcommand{\Ma}{\mathcal\MM}
\newcommand{\NN}{N}
\newcommand{\Nm}{\mathcal{\NN}}
\newcommand{\RR}{R}
\newcommand{\Ref}{\mathcal{\RR}} %
\newcommand{\SSS}{S}
\newcommand{\So}{\mathcal\SSS}
\newcommand{\UU}{U}
\newcommand{\Um}{\mathcal{\UU}}
\newcommand{\dist}{\mathrm{dist}}
\newcommand{\supp}{\mathrm{supp}}
\newcommand{\Tr}{\mathrm{Tr}}
\renewcommand{\Im}{\mathrm{Im}\,}
\renewcommand{\Re}{\mathrm{Re}\,}
\newcommand{\alp}{\alpha}
\newcommand{\gb}{\omega} 
\newcommand{\ox}{\otimes}
\newcommand{\x}{\times}
\newcommand{\dpp}{\partial}
\newcommand{\Cob}{\overset{o}{T^*\Omega}}
\newcommand{\dom}{\dpp \Omega}
\newcommand{\dt}{\dpp_t}
\newcommand{\indx}{r_0,r_\infty}
\newcommand{\liSu}{\underset{\ep \rightarrow 0}{\limsup}\,}
\newcommand{\nd}{\frac{n}{2}}
\newcommand{\nq}{\frac{n}{4}}
\newcommand{\phiu}{\gamma'}
\newcommand{\phsi}{\psi_{\text{inc}}}
\newcommand{\phsr}{\psi_{\text{ref}}}
\newcommand{\rhou}{\rho'}
\newcommand{\Rcomp}{\mathnormal{R}_\eta}
\newcommand{\tnd}{\frac{3n}{2}}
\newcommand{\tnq}{\frac{3n}{4}}
\newcommand{\ud}{\frac{1}{2}}
\newcommand\Char[1]{\mbox{{\boldmath{$1$}}$_{#1}$}}
\newcommand\nord[1]{\|#1\|_{L^2}}
\newcommand\nordom[1]{\|#1\|_{L^2(\Omega)}}
\newcommand\nordye[1]{\|#1\|_{L^2_{y,\eta}}}
\newcommand\norhom[1]{\|#1\|_{H^1(\Omega)}}
\newcommand\nrjFun[1]{\nrj\Par{#1}}
\newcommand\prive[1]{\backslash\{ #1 \}}
\newcommand\tendz[1]{{#1}\rightarrow 0} 
\newcommand\trak[1]{{\widetilde{#1}}^k}
\newcommand\tral[1]{{\widetilde{#1}}^l}
\newcommand\asp[2]{\overset{#1}{\underset{#2}{\asymp}}}
\newcommand\GaE[2]{-(#1)\cdot #2 (#1)}
\newcommand\nordvois[2]{\|#1\|_{L^2({#2})}}
\newcommand\somme[2]{ \overset {#2} { \underset {#1} {\sum} }  \,}
\newcommand\GasE[3]{-(#1)\cdot #2 (#1)/{#3}}
\newcommand\Croch[1]{\left[#1\right]}
\newcommand\Modul[1]{\left|#1\right|}
\newcommand\Par[1]{\left(#1\right)}
\title{A Gaussian beam approach for computing Wigner measures in convex domains}
\author{Jean-Luc Akian\footnote{Aeroelasticity and Structural Dynamics Department, ONERA, 92320 Ch\^atillon, France (jean-luc.akian@onera.fr).}{} , Radjesvarane Alexandre\footnote{Department of Mathematics and Institute of Natural Sciences, Shanghai Jiao Tong University, 200900 Shanghai, PRC China (alexandreradja@gmail.com).}{ }  and Salma Bougacha\footnote{Mechanics, Structures and Materials Laboratory, \'Ecole Centrale Paris, 92295 Ch\^atenay-Malabry, France (salma.bougacha@ecp.fr).}{}}
\begin{document}

\maketitle

\begin{abstract}
A Gaussian beam method is presented for the analysis of the energy of the high frequency solution to the mixed problem of the scalar wave equation in an open and convex subset $\Omega$ of $\IR^n$, with initial conditions compactly supported in $\Omega$, and Dirichlet or Neumann type boundary condition. The transport of the microlocal energy density along the broken bicharacteristic flow at the high frequency limit is proved through the use of Wigner measures. Our approach consists first in computing explicitly the Wigner measures under an additional control of the initial data allowing to approach the solution by a superposition of first order Gaussian beams. The results are then generalized to standard initial conditions.

\textbf{Mathematics Subject Classification: }{35L05, 35L20, 81S30.}

\textbf{Key words and phrases: }{wave equation, Gaussian beam summation, Wigner measures, FBI transform, reflection.}

\end{abstract}

\section{Introduction} 																						
\label{intro} 			We are interested in the high frequency limit of the initial-boundary value problem (IBVP) for the wave equation 
\begin{subequations}
\label{MPb:gp} 
\begin{gather}
Pu_\ep=\dpp_t^2u_\ep-\somme{j=1}{n}\dpp_{x_j} \Par{c^2(x)\dpp_{x_j} u_\ep}=0 \text{ in } [0,T]\x \Omega, \label{MPb:gp1}\\
B u_\ep=0 \text{ in }[0,T]\x \dom,\label{MPb:gp2}\\
u_\ep|_{t=0}=u^I_\ep,\, \dt u_\ep|_{t=0}=v^I_\ep  \text{ in } \Omega,\label{MPb:gp3}
\end{gather}
\end{subequations}
where $B$ stands for a Dirichlet or Neumann type boundary operator. 

Above, $T>0$ is fixed, $\Omega$ is a bounded domain of ${\mathbb R}^n$ with a $\Co^\infty$ boundary and the wave propagation velocity  $c$ is in $\Co^{\infty}(\bar\Omega)$, though this assumption may be relaxed. The initial data depend on a small wavelength parameter $\ep>0$ and we assume that
\begin{align}
\label{CIboundBis} 
&u^I_\ep \text{ and } v^I_\ep \text{ are uniformly bounded w.r.t. } \ep \text{ respectively in } H^1(\Omega) \nonumber \\
&\text{and }L^2(\Omega).
\tag{\textup{H1}}
\end{align}
We are interested in the description of the behavior of the local energy density $\ud |\dt u_\ep|^2+ \ud \somme{j=1}{n} c^2 |\dpp_{x_j} u_\ep|^2$, at the high frequency limit $\tendz{\ep}$, in which case, it is well known that  this quantity can be computed through the use of Wigner measures. 

The Wigner transform is a phase space distribution introduced by E. Wigner \cite{Wigner} in 1932 to study quantum corrections to classical statistical mechanics. In the 90's, mathematicians became increasingly interested by these transforms and related measures, see for example \cite{LiPa,MaMa,MaMaPo,MaPiPo} for the semiclassical limit of Schr\"odinger equations. A general theory for their use in the homogenization of energy densities of dispersive equations was laid out by G\'erard et al. in \cite{GeMaMaPo}, see also \cite{Gerard91a,GaMa}. Wigner measures are also related to the H-measures and microlocal defect measures introduced in \cite{Tartar} and \cite{Gerard91b},  see also \cite{Bu97,Alexandre}. Whereas there is no notion of scale for the latter measures, Wigner transforms are associated to a small parameter tending to zero. In quantum mechanics, this parameter is the rescaled Planck constant, while it will be typically the distance between two points of the medium's periodic structure for homogenization problems. 

The Wigner transform, at the scale $\varepsilon$, is defined for a given sequence $(a_\ep ,b_\ep)$ in $\So'(\IR^n)^p\x \So'(\IR^n)^p$  
as the tempered distribution
\begin{equation*}
w_\ep(a_\ep,b_\ep)(x,\xi) = (2 \pi)^{-n} \int_{\IR^{n}} e^{-i v \cdot \xi} a_\ep(x+\frac{\ep}{2}v) b^*_\ep(x-\frac{\ep}{2}v) dv.
\end{equation*}
If $a_\ep$ is uniformly bounded w.r.t. $\ep$ in $L^2(\IR^n)^p$,
then $w_\ep[a_\ep] := w_\ep(a_\ep,a_\ep)$ converges as $\ep$ goes to $0$ in $\Ma_p\Par{\So'(\IR^n_x \x \IR^n_\xi)}$ to a positive hermitian matrix measure (modulo the extraction of a subsequence), which is called a Wigner measure associated to $(a_\ep)$ and denoted $w[a_\ep]$. The Wigner measures associated to the solution of the wave equation (and hyperbolic problems in general, see e.g. \cite{GeMaMaPo,PaRh}) are related to the energy density in the high frequency limit. More precisely, under suitable hypotheses (see Proposition 1.7 in \cite{GeMaMaPo}), the density of energy associated to the solution $u_\ep^C$ of the Cauchy problem for the scalar wave equation converges as $\ep \rightarrow 0$ in the sense of measures to
\begin{equation*}
	 \int_{\IR^n} \nrjFun{u_\ep^C(t,.)}(x,d\xi) ,
\end{equation*}
where
\begin{equation*}
\nrjFun{u_\ep^C(t,.)} = \ud w[\dt u_\ep^C(t,.)] + \ud \somme{j=1}{n} w[ c \dpp_{x_j} u_\ep^C(t,.)].
\end{equation*}
Moreover, $\nrjFun{u_\ep^C(t,.)}$ is the sum of two measures  satisfying transport equations of Liouville type (see e.g. \cite{GeMaMaPo}).

For the Dirichlet or Neumann initial boundary value problem connected with the wave equation, we shall study the same quantity after extending $\dt u_\ep(t,.)$ and $c \dpp_{x_j} u_\ep(t,.)$, $j=1,\dots,n$, to functions of $L^2(\IR^n)$ by setting $\udtuep= \Char{\Omega} \dt u_\ep$, $\udxjuep= \Char{\Omega} \dpp_{x_j} u_\ep$ and extending $c$ outside $\bar \Omega$ in a smooth way. Hence, we call microlocal energy density of $u_\ep$ the distribution
\begin{equation*}
 \ud w_\ep\Croch{\udtuep(t,.)} + \ud  \somme{j=1}{n} w_\ep\Croch{c \udxjuep(t,.)}
\end{equation*}
and its high frequency limit the measure
\begin{equation*}
\nrjFun{u_\ep(t,.)} = \ud w\Croch{\udtuep(t,.)} + \ud  \somme{j=1}{n} w\Croch{c \udxjuep(t,.)}.
\end{equation*}
$\uvepI = \Char{\Omega} v^I_\ep $ and $\underxjuepI = \Char{\Omega}\dpp_{x_j}u^I_\ep$ ($j=1,\dots,n$) will satisfy the usual assumptions needed in the general context of the study of Wigner measures: their Wigner measures are supposed unique and
\begin{align}
\label{oscil} 
&\uvepI \text{ and } \underxjuepI,\, j=1,\dots,n,\text{ are } \ep\text{-oscillatory (see \eqref{oscilDefv} and \eqref{oscilDefu}),}
\tag{\textup{H2}}\\
\label{wigZero} 
&\text{the Wigner measures of }\Par{\uvepI} \text{ and } \Par{\underxjuepI},\, j=1,\dots,n,\text{ do not charge the set}\nonumber \\
&\IR^n \x \{\xi=0\}.
\tag{\textup{H3}}
\end{align}
Our present study will be restricted to the case where the rays starting from the support of the initial data do not face diffraction on the boundary, nor do they glide along $\dom$. Therefore, we also assume that
\begin{equation}	
\label{CIcompBis}
u^I_\ep \text{ and } v^I_\ep \text{ have supports contained in a fixed compact set of } \Omega \text{ independent of }\ep,
\tag{\textup{H4}}
\end{equation}
\begin{align*}
 &\!\!\!\!\Omega \text{ is convex with respect to the bicharacteristics of the wave operator, that is }\\
 &\!\!\!\!\text{every ray originating from } \Omega \text{ hits the boundary twice and transversally,}
\end{align*}
and
\begin{align*}
 &\!\text{the boundary has no dead-end trajectories, that is infinite number of successive}\\
 &\!\text{reflections cannot occur in a finite time.}
\end{align*}
These geometric hypotheses insure that the only phenomena occurring at the boundary is the reflection according to the geometrical optics laws.

Wigner measures for the wave equation in presence of a boundary or an interface have been studied by Miller \cite{Miller} who proved refraction results for sharp interfaces 
and Burq \cite{Burq} who described their support for a Dirichlet boundary condition.
Similar results have been established for other problems \cite{BuLe,Duyckaerts04,Fouassier07}, in particular the eigenfunctions for the Dirichlet problem \cite{Zelditch,GeLe} and for the Neumann and Robin problems \cite{Burq03}. All these works are based on pseudo-differential calculus, and in particular the use of a tangential pseudo differential calculus.

In this paper, we present an approach to compute Wigner measures based on the Gaussian beam formalism. Therefore, we avoid any use of adapted pseudo-differential calculus. Though a Gaussian beam technic requires much more work, compared to the above mentioned papers, one advantage is that we are able to give asymptotic estimates for remainders terms, which could be useful for numerical purposes for instance.

Let us recall that Gaussian beams (or the related coherent states) are waves with a Gaussian shape at any instant, localized near a single ray \cite{Babich,Ralston82}. They play the role of a basis of fundamental solutions of wave motion and furthermore can be used to study general solutions of partial differential equations (PDEs). For example, they can help for the understanging of propagation of singularities \cite{Ralston82}, to prove lack of observability \cite{MaZu} and to study semiclassical measures \cite{PaUr96} and trace formulas \cite{Wilkinson,CoRaRo}. 

To describe non localized solutions of PDEs, one can use the Gaussian beam summation method  \cite{KaPo81,CePoPs,Klimes84}. The initial field is expanded as a sum of Gaussian beams. Each individual beam is computed and the solution is then obtained at an observation point by superposing a selection of Gaussian beams. The summation strategies are numerous. The sum can be discrete \cite{MoRu08,TaEnTs09,ArEn10} or continuous \cite{LiRaAcou,LiRuTa2010}, the selection of the beams to be superposed can be done according to several criteria. In \cite{BoAkAl09}, a weighted integral of Gaussian beams was designed to build an approximate solution of the IBVP \eqref{MPb:gp} under an additional assumption \eqref{CIsmoothBis} on the initial data (see p.\pageref{CIsmoothBis}). See also \cite{LeQi2009, LeQi2010, QiYi2010} for recent numerical implementations related to this method.

Gaussian beams seem to be very well suited for the study of Wigner measures. Indeed, the Wigner transform of two different beams vanishes when $\ep$ goes to zero. Even better, the Wigner measure of one individual beam is a Dirac mass localized on the corresponding bicharacteristic. Thus Gaussian beams act as an orthogonal family for the Wigner measure. Using these elementary solutions for studying Wigner measures is not new, see for example in the whole space domain the work by Robinson \cite{Robinson} for the Schr\"odinger equation, and more recently the paper by Castella \cite{Castella} who used a coherent states approach for the Helmholtz equation.

As the microlocal energy density of one individual beam is concentrated near its associated bicharacteristic, one would expect that the Wigner measure of a summation of weighted Gaussian beams will yield easily that the associated weights are transported along the broken bicharacteristic flow (see p.\pageref{refFlow} for the construction of reflected flows and p.\pageref{bbf} for the definition of the broken flow). Unfortunately this result is not immediate as even different beams become infinitely close to each other. 
However, we shall show by elementary computations that this intuition is indeed true and that the microlocal energy density of the considered approximate solution is transported at the high frequency limit along the broken bicharacterisitic flow. Since the asymptotic solution is close to the exact solution $u_\ep$, we may deduce the same consequence for $\nrjFun{u_\ep(t,.)}$.

The additional hypothesis \eqref{CIsmoothBis} consists in assuming that in the frequency space, the initial data are supported in a compact that does not contain $0$ (modulo infinitely small residues). When studying Wigner measures by the pseudo-differential calculus techniques, the frequency behavior of the initial conditions is only controlled by the less restrictive hypotheses  \eqref{oscil} and \eqref{wigZero}. Hence, the assumption \eqref{CIsmoothBis} is artificial (though not for numerical purposes) and is required only by the Gaussian beam summation method we have chosen. However, for $\ep$-oscillatory initial data with Wigner measures not charging the set $\IR^n \x\{\xi=0\}$, a truncation of their frequency support at infinity and at zero does not affect the energy density of the solution as $\ep \rightarrow 0$. By achieving such a truncation, we succeed to derive the transport property of the energy density under the traditional hypotheses \eqref{oscil} and \eqref{wigZero}:

\begin{theorem}
Assume the hypotheses \eqref{CIboundBis}-\eqref{CIcompBis} on the initial conditions hold true. Let $\nrj^\pm=\ud w\Croch{\uvepI \pm ic |D| \uuepI}$ and denote by $\varphi^t_b$ the broken bicharacteristic flow associated to $-i\dpp_t- c |D|$ obtained after successive reflections on the boundary $\dom$.
Then 
\begin{equation*}
\nrjFun{u_\ep(t,.)}=\ud\big(\nrj^+ \ron (\varphi_b^{-t})^{-1}+\nrj^- \ron (\varphi_b^{t})^{-1} \big) \text{ in } \Omega \x (\IR^n\prive{0}). 
\end{equation*}
\end{theorem}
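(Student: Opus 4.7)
The plan has two stages. First, under the additional assumption \eqref{CIsmoothBis}, use the Gaussian beam representation developed in the earlier parts of the paper (following \cite{BoAkAl09}) to approximate $u_\ep$ and compute the Wigner measure of the approximation directly. Second, remove \eqref{CIsmoothBis} by a frequency truncation of the initial data, a truncation that is invisible in the Wigner limit on the region $\xi\neq 0$ thanks to \eqref{oscil} and \eqref{wigZero}.

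\textbf{Step 1 (Gaussian beam computation under \eqref{CIsmoothBis}).} Using the half-wave factorization $\tau^2-c(x)^2|\xi|^2=(\tau-c|\xi|)(\tau+c|\xi|)$, split $u_\ep=u_\ep^++u_\ep^-$ with the data of each piece arranged so that $\dt u_\ep^\pm|_{t=0}=\ud(v_\ep^I\pm ic|D|u_\ep^I)$, matching the data of $\nrj^\pm$. Approximate each $u_\ep^\pm$ in $H^1(\Omega)$ by a weighted superposition $\tilde u_\ep^\pm=\int A^\pm(y,\eta)\,\Phi_\ep^{\pm}(t,x;y,\eta)\,dy\,d\eta$ of first-order Gaussian beams $\Phi_\ep^\pm$ centered on the broken bicharacteristic $s\mapsto\varphi_b^{\mp s}(y,\eta)$, with reflection coefficients absorbed at each hit of $\dom$. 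The computation of $\nrjFun{\tilde u_\ep^\pm(t,.)}$ then rests on two elementary facts: (i) the Wigner transform of a single first-order beam concentrates, as $\ep\to 0$, on a Dirac along its bicharacteristic with amplitude $|A^\pm(y,\eta)|^2$ up to the principal-symbol factor $c|\eta|$; (ii) cross Wigner transforms between beams with distinct base points decorrelate, after integration against $A^\pm$, through an FBI-type $L^2$-orthogonality argument of the kind already used in \cite{BoAkAl09}. Summing the $n+1$ quadratic terms then collapses the principal-symbol factor and yields the pushforward $\ud\,\nrj^\pm\ron(\varphi_b^{\mp t})^{-1}$. Cross Wigner contributions between $+$ and $-$ modes vanish since the two families live microlocally on the disjoint characteristic sheets $\tau=\pm c(x)|\xi|$, a separation preserved by the broken flow.

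\textbf{Step 2 (Removing \eqref{CIsmoothBis} by truncation).} Pick $\rho\in\Co_c^\infty(\Omega)$ equal to $1$ near the compact support from \eqref{CIcompBis}, and smooth cutoffs $\chi_\delta,\chi_R$ localizing in $\delta\leq|\eta|\leq R$. Set $u_{\ep,R,\delta}^I=\rho\,\Fo^{-1}\bigl(\chi_\delta(\ep\xi)\chi_R(\ep\xi)\Fo u_\ep^I\bigr)$ and similarly $v_{\ep,R,\delta}^I$. These data satisfy \eqref{CIsmoothBis}, so Step 1 delivers the desired identity for the solution $u_{\ep,R,\delta}$ with measures $\nrj^\pm_{R,\delta}$. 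For error control, \eqref{oscil} kills the high-frequency loss in the initial data uniformly in $\ep$ as $R\to\infty$, and \eqref{wigZero} kills the low-frequency loss in the Wigner sense on $\xi\neq 0$ as $\delta\to 0$. Energy conservation for \eqref{MPb:gp} applied to $u_\ep-u_{\ep,R,\delta}$ then transfers these bounds to uniform smallness in $t\in[0,T]$ of $\nrjFun{u_\ep(t,.)}-\nrjFun{u_{\ep,R,\delta}(t,.)}$ on $\Omega\times(\IR^n\prive{0})$, and a diagonal argument passing $\ep\to 0$ followed by $\delta\to 0$, $R\to\infty$ concludes.

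\textbf{Main obstacle.} I expect the hardest step to be the cross-term vanishing (ii) of Step 1 in the presence of boundary reflections: beams based at nearby phase-space points remain arbitrarily close in $(x,\xi)$, so the decorrelation is not pointwise but only integrated, and each reflection introduces a phase shift and amplitude factor that must be reconciled between the two beams of a given pair. The FBI isometry provides the right framework, but propagating it through the piecewise broken flow requires that the number of reflections be finite and uniform over beams, which is exactly what the convexity and no-dead-end assumptions guarantee together with \eqref{CIcompBis}. A secondary delicate point is the low-frequency truncation in Step 2, which must be strong enough to legitimize the $\pm$-mode splitting since $(ic|D|)^{-1}v_\ep^I$ is singular at $\xi=0$; \eqref{wigZero} is precisely what makes this compatible with the target region $\xi\neq 0$.
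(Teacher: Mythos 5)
Your overall architecture coincides with the paper's (Gaussian beam summation under the extra hypothesis \eqref{CIsmoothBis}, then removal of that hypothesis by a frequency truncation controlled through \eqref{oscil} and \eqref{wigZero}; your Step 2 is essentially Section \ref{truncation}, the only cosmetic difference being that you truncate the Fourier transform where the paper truncates the FBI transform). But Step 1 contains two genuine gaps. First, your mechanism for killing the cross terms between the $+$ and $-$ modes is wrong: the energy density $\nrjFun{u_\ep(t,.)}$ is built from Wigner transforms at \emph{fixed} $t$ in the spatial variables only, and these do not see the dual time variable $\tau$, so "disjoint characteristic sheets $\tau=\pm c(x)|\xi|$" is unavailable. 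Indeed at $t=0$ with $u_\ep^I=0$ the two modes coincide and the cross Wigner transform equals $w_\ep[\uvepI]\neq 0$; for $t\neq0$ the cross terms carry non-decaying oscillatory factors of the type $e^{\pm 2ic|\xi|t/\ep}$ and need not vanish along subsequences. The paper never proves individual vanishing: it proves the identity $w_\ep\Par{v_t^+(t,.),v_t^-(-t,.)}\approx c^2\,\Tr\, w_\ep\Par{v_x^+(t,.),v_x^-(-t,.)}$ in $\Cob$ and exploits the opposite signs in $\dt u^{appr}\sim\ud(v_t^+-v_t^-)$ versus $\dpp_x u^{appr}\sim\ud(v_x^++v_x^-)$, so that the cross contributions cancel \emph{exactly} in the combination $\ud|\dt u|^2+\ud c^2|\dpp_x u|^2$. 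Without this algebraic cancellation between the time-derivative and space-derivative parts your argument breaks down.

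Second, your items (i)--(ii) assume precisely what the paper identifies as the crux and proves at length. The Wigner measure of a continuous superposition of beams is \emph{not} the superposition of the Dirac measures of the individual beams weighted by $|A^\pm(y,\eta)|^2$: beams launched from phase-space points at distance $O(\sqrt\ep)$ overlap strongly, their cross Wigner transforms do not vanish, and the weights are $\ep$-dependent FBI transforms of merely $L^2$-bounded data, not fixed amplitudes (the introduction states explicitly that "this result is not immediate as even different beams become infinitely close to each other"). No FBI-orthogonality argument supplies this; the paper's replacement is the explicit Robinson-type computation of Lemma \ref{robin}, which reduces $w_\ep\Par{I_\ep(f_\ep,\Phi),I_\ep(g_\ep,\Psi)}$ to an $\IR^{4n}$ integral with Gaussian kernel $Q(H_\Phi,\overline{H_\Psi})$, followed in Proposition \ref{wignerGBS} by the symplectic identity $Q\Par{\trak{\Lambda_k},\trak{\bar\Lambda_k}}=(F_{-k}^{-t})^TF_{-k}^{-t}$ tying the Gaussian to the Jacobian of the inverse flow, a change of variables at scale $\sqrt\ep$ in the transported FBI transforms, and dominated convergence, yielding $w_\ep(\kappa_\ep,\tau_\ep)\ron\Par{\varphi_k^t}^{-1}$; the relation \eqref{FBIWig} then produces the pushforward of $w[\Upsilon^\pm_{\ep,\indx}]$ along the broken flow. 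Note also that the cross terms between \emph{different} beam families $k\neq l$ (incident versus reflected) are disposed of geometrically in the paper --- for $(s,\sigma)\in\Cob$ at most one of the inverse-flow base points $x_{-k}^{-t}(s,\sigma)$, $x_{-l}^{-t}(s,\sigma)$ lies in $\Omega$ --- not by any orthogonality of the FBI transform. As written, your Step 1 therefore gives at best a formal WKB-type answer and is missing the central analytic content of the theorem.
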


As mentioned already in our Introduction, this result is well known. But our method of proof is able to give more precise estimations than those stated above for the Wigner measures. In particular, we have estimations on the Wigner transforms of the solutions.

The rest of the paper is organized as follows. In Section 2, we recall the construction of first order Gaussian beams and the structure of the asymptotic solutions obtained as an infinite sum of such beams. The derivatives of the asymptotic solutions are then expressed using Gaussian type integrals.
We simplify the expression of the Wigner transform of such integrals in Section \ref{wigComput}, following initial computations of \cite{Robinson} in the Schr\"odinger case.
We then compute the microlocal energy density of the asymptotic solution by exploiting the expressions of the beams phases and amplitudes and using the dominated convergence theorem.  We prove the propagation along the broken flow of $\nrjFun{u_\ep(t,.}$ at the high frequency limit, with the help of assumptions \eqref{oscil} and \eqref{wigZero} on the initial data. Some complementary results are collected in an Appendix, Section 4.


Let us end this Introduction with a few notations which will be used hereafter.

A vector $x \in \IR^d$ will be denoted by $(x_1,\dots,x_d)$, the inner product of two vectors $a,b \in \IR^d$ by $a \cdot b$, and the transpose of a matrix $A$ by $A^T$. If $E$ is a subset of $\IR^d$, we denote  $E^c$ its complementary and  $\Char{E}$ its characteristic function. For a function $f \in L^2(\Omega)$, we let $\underline{f} = \Char{\Omega} f$.
For $r>0$, $\chi_r$ denotes a cut-off function in $\Co_0^\infty(\IR^n,[0,1])$ such that
\begin{equation*}
	\chi_r(x)=1 \text{ if } |x|\leq r/2 \text{ and }	\chi_r(x)=0 \text{ if } |x|\geq r.
\end{equation*}
We use the following definition of the Fourier transform 
\begin{equation*}
\Fo_{x}{u}(\xi)=\int_{\IR^d} u(x) e^{-ix \cdot \xi} dx \text{ for } u \in L^2(\IR^d). 
\end{equation*}
If no confusion is possible, we shall omit the reference to the lower index $x$. 
\newline
We keep the standard multi-index notations. 
For a scalar function $f\in \Co^\infty(\IR_x^d,\IC)$, $\dpp_x f$ will denote its gradient vector $(\dpp_{x_j} f)_{1\leq j \leq d}$ and $\dpp_x^2 f$ will denote its Hessian matrix $(\dpp_{x_j} \dpp_{x_k} f)_{1\leq j,k \leq d}$.
For a vector function $g\in \Co^\infty(\IR^d,\IC^p)$, the notation $D g$ is used for its Jacobian matrix $(D g)_{j,k}=\dpp_{x_k} g_j$. If $g$ is a function in $\Co^\infty(\IR^n_y \x \IR^n_{\eta},\IC^p)$, we denote $(D_y g)_{j,k} =\dpp_{y_k} g_j $ and $(D_\eta g)_{j,k} =\dpp_{\eta_k} g_j $.
We use the letter $C$ to denote a (possible different at each occurence) positive constant.
\\
For $(y_\ep)$ and $(z_\ep)$ sequences of $\IR_+$ with $\ep \in]0,\ep_0]$, we use the notation $y_\ep \lesssim z_\ep$ if there exists $C>0$ independent of $\ep$ such that $y_\ep \leq C z_\ep$ for $\ep$ small enough. We write $y_\ep \lesssim \ep^\infty$ or $y_\ep = O(\ep^\infty)$ if for any $ s \geq 0$ there exists $C_s>0$ s.t.  for $\ep$ small enough  $y_\ep \leq C_s \ep^s$.
\\
Finally, if $E$ is in an open subset of $\IR^{2n}$ and $\nu_\ep$, $\nu_\ep'$ are two distributions s.t.
\begin{equation*}
\underset{\tendz{\ep}}{\lim}  (\nu_\ep-\nu_\ep') = 0 \text{ in } E, 
\end{equation*}
we shall write
\begin{equation*}
\nu_\ep \approx \nu_\ep' \text{ in } E.
\end{equation*}

\section{Tool-box and construction of the asymptotic solution} 		
\setcounter{equation}{0}
\label{GBS} 				We recall the construction made in \cite{BoAkAl09} of an asymptotic solution as a superposition of Gaussian beams and give the expression of its time and spatial derivatives with the help of so called Gaussian integrals.

\subsection{First order Gaussian beams}

\subsubsection{Beams in the whole space} \label{beams}

Let $h_+(x,\xi)=c(x)|\xi|$ and $(x^t,\xi^t)$
be a Hamiltonian flow for $h_+$, 
that is a solution of the system
\begin{equation*}
\displaystyle{\frac{d}{dt}}x^t=\dpp_\xi h_+(x^t,\xi^t)=c(x^t)\frac{\xi^t}{|\xi^t|},		\qquad \displaystyle{\frac{d}{dt}}\xi^t=-\dpp_x h_+(x^t,\xi^t)=-\dpp_x c(x^t)|\xi^t|.
\end{equation*}
The curves $(t,x^{\pm t})$ of $\IR^{n+1}$ are called the rays of $P$.

An individual first order (Gaussian) beam for the wave equation associated to a ray $(t,x^t)$ has the form
\begin{equation*}
	\gb_\ep(t,x) =  a_0( t,x) e^{i\psi(t,x)/\ep},
\end{equation*}
with a complex phase function $\psi$ real-valued on $(t,x^{t})$, an amplitude function $a_0$ null outside a neighborhood of $(t,x^{t})$, and such that
\begin{equation*}
	 \underset{t\in[0,T]}{\sup}\nordom{P \gb_\ep(t,.)} = O(\ep^m),
\end{equation*}
for some $m>0$.

The construction of such a beam is achieved by making the amplitudes of $P\gb_\ep$ vanish on the ray up to fixed suitable orders \cite{Ralston82,KaKuLa,MaZu} 
\begin{equation}
\label{Pgb}
	P \gb_\ep = \Big( \ep^{-2} p(x,\dt\psi,\dpp_x\psi) a_0 + \ep^{-1} i\Par{2\dt \psi \dt a_0 - 2 c^2 \dpp_x \psi \dpp_x a_0 + P\psi a_0}+ \text{h.o.t.} \Big) e^{i\psi/\ep},
\end{equation}
where $p(x,\tau,\xi)=  c^2(x) |\xi|^2 - \tau^2 $ is the principal symbol of $P$ and h.o.t. denotes higher order terms.
The first equation is then the eikonal equation
\begin{equation}
\label{eikonalTwo}
p\Par{x,\dt \psi(t,x),\dpp_x\psi(t,x)}=0
\end{equation}
on $x=x^t$ up to order $2$ (see Remark 2.1 in \cite{BoAkAl09} for an explanation of the choice of this specific order),  which means
\begin{equation*}
	\dpp_{x}^\alp \Croch{p\Par{x,\dt \psi(t,x),\dpp_x\psi(t,x)}}|_{x=x^t}=0 \text{ for } |\alp|\leq 2.
\end{equation*}
Orders $0$ and $1$ of the previous equation are fulfilled on the ray by setting 
\begin{equation}
\label{PhaseOneBis}
	\dt \psi (t,x^{t})= -h_+(x^{t},\xi^{t}) \text{ and } \dpp_x \psi (t,x^{t})=\xi^{t}. 
\end{equation}
Choosing
$\displaystyle{\psi(0,x^0)} \text{ as a real quantity}$, it follows that
\begin{equation}
\label{PhaseZeroBis}
	\psi(t,x^t) \text{ is real}. 
\end{equation}
Order $2$ of eikonal \eqref{eikonalTwo} on the ray may be written as a Riccati equation
\begin{equation}
\label{riccatiBis}
\begin{split}
&	\displaystyle{\frac{d}{dt}}\Par{\dpp^2_{x}\psi(t,x^t)}+ H_{21}(x^t,\xi^t) \dpp^2_{x}\psi(t,x^t) + \dpp^2_{x}\psi(t,x^t) H_{12}(x^t,\xi^t)  \\
&+ \dpp^2_{x}\psi(t,x^t) H_{22}(x^t,\xi^t)  \dpp^2_{x}\psi(t,x^t) + H_{11}(x^t,\xi^t) = 0,
\end{split}
\end{equation}
where $H=\left(\begin{array}{cc}
H_{11} & H_{12} \\
H_{21} & H_{22} \\
\end{array}\right)$ is the Hessian matrix of $h_+$.
This nonlinear Riccati equation has a unique global symmetric solution which satisfies the fundamental property
\begin{equation}
\label{PhaseTwoBis}
	\Im \dpp_{x}^2 \psi\Par{t,x^t} \text{ is positive definite},
\end{equation}
given an initial symmetric matrix $\dpp_{x}^2 \psi\Par{0,x^0}$ with a positive definite imaginary part (see the proof of Lemma 2.56 p.101 in \cite{KaKuLa}).

The phase is defined beyond the ray as a polynomial of order $2$ w.r.t. $(x-x^t)$ \cite{TaQiRa} 
\begin{equation}
\label{quadPsi}
 	\psi(t,x) = \psi(t,x^t) + \xi^t \cdot (x-x^t) + \ud (x-x^t) \cdot \dpp_x^2\psi(t,x^t) (x-x^t).
\end{equation}
Next, we make the term associated to the power $\ep^{-1}$ in the expansion \eqref{Pgb} vanish on $(t,x^t)$
\begin{equation}
\label{evolBis}
 2\dt \psi \dt a_0 - 2 c^2 \dpp_x \psi \dpp_x a_0 + P\psi a_0= 0 \text{ on }	(t,x^t),
\end{equation}
which leads to a linear ordinary differential equation (ODE) on $a_0(t,x^t)$. The amplitude is then chosen under the form
\begin{equation*}
	a_0(t,x)= \chi_d(x-x^t) a_0(t,x^t),
\end{equation*}
where $d$ is a positive parameter. The constructed beams are thus defined for all $(t,x) \in \IR^{n+1}$ and they satisfy the estimate
\begin{equation*}
\nordom{\ep^{-\nq+1}P\gb_\ep(t,.)}=O(\sqrt \ep) \text{ uniformly w.r.t. } t\in [0,T].	
\end{equation*}
Note that Gaussian beams for $P$ associated to the ray $(t,x^{-t})$ are $\gb_\ep(-t,x)$.

\subsubsection{Incident and reflected beams in a convex domain} \label{Inc&ref}

Assume that $c(x)$ is constant for $\dist(x,\bar \Omega)$ larger than some constant $C>0$. Given a point $(y,\eta)$ in the phase space $\overset{o}{T^*\IR^{n}}$, where $\overset{o}{T^* U}$ denotes  $U \x \Par{\IR^n \prive{0}}$ if  $U$ is an open set of $\IR^n$, the Hamiltonian flow $\varphi^t_0(y,\eta) = \Par{x^t_0(y,\eta),\xi^t_0(y,\eta)}$ satisfying:
\begin{align*}
\frac{d}{dt}x^t_0=c(x^t_0)\frac{\xi^t_0}{|\xi^t_0|},&\,		\frac{d}{dt}\xi^t_0=-\dpp_x c(x^t_0)|\xi^t_0|,	\\	
  \\
x^t_0|_{t=0}=y,&\,	\xi^t_0|_{t=0}=\eta,\eta \ne 0,
\end{align*}
is called incident flow. A beam associated to the incident ray $(t,x^t_0)$ is denoted $\gb_\ep^0$ and called an incident beam.
Since we have dependence w.r.t. the initial conditions $(y,\eta )$, we write the incident beam as
\begin{equation*}
	\gb^0_\ep(t,x,y,\eta) =  a_0(t,x,y,\eta) e^{i\psi_0(t,x,y,\eta)/\ep}.
\end{equation*}
Let $\Ref$ be the reflection involution
\begin{equation}
\label{refInv}\begin{split}
	\Ref:\overset{o}{T^*\IR^n}|_{\dom}&\rightarrow \overset{o}{T^*\IR^n}|_{\dom} \\
	(X,\Xi)& \mapsto \Par{X,(Id - 2 \nu(X)\nu(X)^T)\Xi},
\end{split}\end{equation}
where $\nu$ denotes the exterior normal field to $\dom$. We shall only consider initial points $(y,\eta) \in \Bic= \cup_{t\in \IR}\varphi_0^t(\Cob)$ giving rise to rays that enter the domain $\Omega$ at some instant. Each associated flow $\varphi^t_0(y,\eta)$ hits the boundary twice. Reflection of $\varphi^t_0(y,\eta)$ at the exit time $t=T_1(y,\eta)$ s.t.
\begin{equation*}
x_{0}^{T_1(y,\eta)}(y,\eta)\in \dom \text{ and }\dot{x}_0^{T_1(y,\eta)}(y,\eta) \cdot \nu\Par{x_{0}^{T_1(y,\eta)}(y,\eta)}>0
\end{equation*}
gives birth to the reflected flow $\varphi^t_1(y,\eta)=\Par{x_1^t(y,\eta),\xi_1^t(y,\eta)}$ defined by the condition
\begin{equation*}
\label{refFlow}
	\varphi^{T_{ 1}(y,\eta)}_1(y,\eta)=\Ref\,o\varphi^{T_{ 1}(y,\eta)}_0(y,\eta). 
\end{equation*}

Similarly, we also define the reflection time $T_{-1}(y,\eta)$ and the flow $\varphi_{-1}^t(y,\eta)$ by reflecting $\varphi^{t}_0(y,\eta)$ as follows
\begin{equation*}\begin{split}
&x_{0}^{T_{-1}(y,\eta)}(y,\eta)\in \dom \text{ and }\dot{x}_0^{T_{-1}(y,\eta)}(y,\eta) \cdot \nu\Par{x_{0}^{T_{-1}(y,\eta)}(y,\eta)}<0,\\
&\varphi_{-1}^{T_{-1}(y,\eta)}(y,\eta)= \Ref \ron \varphi_0^{T_{-1}(y,\eta)}(y,\eta).
\end{split}\end{equation*}
We denote, for $k=\pm 1$, the reflected beams by 
\begin{equation*}
	\gb^k_\ep(t,x,y,\eta) = a^k_0(t,x,y,\eta) e^{i\psi_k(t,x,y,\eta)/\ep}.
\end{equation*}
These beams are associated to the reflected bicharacteristics $\varphi_k^t$. Let us introduce, for $k=0,\pm 1$, the boundary amplitudes $d_{-m_B+j}^k$ s.t. 
$$B  \gb_\ep^k = \somme{j= 0}{m_B} \ep^{-m_B+j} d_{-m_B+j}^k e^{i\psi_k/\ep}.$$
Above, $m_B$ denotes the order of $B$ ($m_B=0$ for Dirichlet and $m_B=1$ for Neumann). The construction of the reflected phases and amplitudes is achieved by imposing that
\begin{enumerate}
\item \label{refPhaseBis} the time and tangential derivatives of $\psi_k$ equal at $(T_k,x_0^{T_k})$ those of $\psi_0$ up to order $2$,
\item \label{refAmp} $\Par{d_{-m_B}^0 + d_{-m_B}^k}(T_k,x_0^{T_k})=0$,
\end{enumerate}
for $k=\pm 1$.
These constraints \emph{uniquely} determine the reflected phases and amplitudes, once the incident ones are fixed \cite{Ralston82}. If $T$ is sufficiently small, at most one reflection occurs in the interval $[0,T]$ and in the interval $[-T,0]$ for a fixed starting position and vector speed $(y,\eta) \in \Cob$, and the following boundary estimates are satisfied \cite{Ralston82} 
\begin{align*}
\|B\Par{\ep^{-\nq+1} \gb^0_\ep(.,y,\eta) + \ep^{-\nq+1}\gb^{1}_\ep(.,y,\eta)}\|_{H^{s}( [0,T]\x\dom)} &=O(\ep^{-m_B-s+\frac{3}{2}}),\\
 \text{and } \|B\Par{\ep^{-\nq+1} \gb^0_\ep(.,y,\eta) + \ep^{-\nq+1}\gb^{-1}_\ep(.,y,\eta)}\|_{H^s([-T,0] \x \dom)}&=O(\ep^{-m_B-s+\frac{3}{2}}),
\end{align*}
for $s\geq 0$.

\subsection{Gaussian beam summation}

The construction of asymptotic solutions to the IBVP \eqref{MPb:gp1}-\eqref{MPb:gp2} with initial conditions \eqref{CIcut} having a suitable frequency support (see below) is recalled, through the Gaussian beam summation introduced in \cite{BoAkAl09}. 
We focus on a superposition of first order beams, for which exact expressions of the phases and amplitudes are displayed in Subsection \ref{fstBeams}.
These beams lead to a first order approximate solution, close to the exact one up to $\sqrt \ep$. Then, the derivatives of the first order solution will be approximated by some Gaussian type integrals.

\subsubsection{Construction of the approximate solution}

In \cite{BoAkAl09}, we have constructed a family of asymptotic solutions to the IBVP for the wave equation for initial data satisfying \eqref{CIboundBis}, \eqref{CIcompBis} and an additional hypothesis \eqref{CIsmoothBis} concerning their FBI transforms.

Let us recall here that the FBI transform (see \cite{Martinez}) is, for a given scale $\varepsilon$, the operator $T_\ep:L^2(\IR^n)\rightarrow L^2(\IR^{2n})$ defined by
\begin{equation}
\label{FBI1Bis}
T_\ep (a) (y,\eta)=c_n \ep^{-\tnq} \int_{\IR^n}a(x)e^{i \eta \cdot(y-x)/\ep-(y-x)^2/(2\ep)} dx,\, c_n=2^{-\nd}\pi^{-\tnq}, \ a\in L^2(\IR^n) ,
\end{equation}
with adjoint operator given by
\begin{equation*}
T_\ep^* (f) (x)=c_n \ep^{-\tnq} \int_{\IR^{2n}} f(y,\eta )e^{ i \eta \cdot (x-y) /\ep-(x-y)^2/(2\ep)} dy d\eta,\,  f\in L^2(\IR^{2n}) .
\end{equation*}
As the Fourier transform, the FBI transform is an isometry, satisfying $T_\ep^* T_\ep = Id$. The extra assumption on the initial data needed in \cite{BoAkAl09} is
\begin{equation}
\label{CIsmoothBis} 
\nordvois{T_\ep \uuepI}{\IR^n \x  \Rcomp^c} = O(\ep^\infty) \text{ and } \nordvois{T_\ep \uvepI}{\IR^n \x  \Rcomp^c} = O(\ep^\infty),
\tag{\textup{H5}}
\end{equation}
where $\Rcomp^c$ denotes the complementary in $\IR^n$ of some ring $\Rcomp=\{\eta\in\IR^n,r_0\leq |\eta| \leq r_\infty\}$, $0<r_0 \ll r_\infty$.

In general, this assumption may be not satisfied. 

Therefore, we construct a family of initial data $(u_{\ep,\indx}^I,v_{\ep,\indx}^I)$ close to 
\\
$(\uuepI,\uvepI)$, satisfying the same assumptions as \eqref{CIboundBis}, \eqref{CIcompBis} and having FBI transforms small in $L^2(\IR^n \x  \Rcomp^c)$. Letting $r_0$ go to $0$ and $r_\infty$ go to $+\infty$ makes these data approach $(\uuepI,\uvepI)$ in a sense that will be specified in Section \ref{truncation}. In any case, the needed convergence is weaker than a $L^2$ convergence since we are interested in the study of Wigner measures. 

Let us first truncate $T_\ep \uuepI$ and $T_\ep \uvepI$ outside $\Rcomp$ by multiplying them by a cut-off $\gamma_{\indx} \in \Co_0^\infty(\IR^n,[0,1])$ supported in the interior of $\Rcomp$ 
\begin{equation}
\label{cutet}
		\gamma_{\indx} = \chi_{r_\infty/2} (1-\chi_{4 r_0}).
\end{equation}
Lemma \ref{cuteta} from the Appendix (Section 4) yields
\begin{align*}
\nordvois{T_\ep T_\ep^* \gamma_{\indx}(\eta) T_\ep	\uuepI}{\IR^n \x  \Rcomp^c} &= O(\ep^\infty), \nonumber \\
\text{and } \nordvois{T_\ep T_\ep^* \gamma_{\indx}(\eta) T_\ep	\uvepI}{\IR^n \x  \Rcomp^c} &= O(\ep^\infty).
\end{align*}
In order to have data supported in fixed compact sets of $\Omega$ independent of $\ep$, we multiply $\Par{T_\ep^* \gamma_{\indx}(\eta) T_\ep	\uuepI,T_\ep^* \gamma_{\indx}(\eta) T_\ep	\uvepI}$
by a cut-off $\rho \in \Co_0^\infty(\IR^n,[0,1])$ supported in $\Omega$, and consider
\begin{equation}
\label{CIcut}
	u_{\ep,\indx}^I = \rho T_\ep^* \gamma_{\indx}(\eta) T_\ep \uuepI \text{ and } v_{\ep,\indx}^I = \rho T_\ep^* \gamma_{\indx}(\eta) T_\ep \uvepI. \tag{\ref{MPb:gp3}'}
\end{equation}
It is assumed that $\rho(x)= 1$ if $\dist(x,\supp u_\ep^I \cup \supp v_\ep^I) \leq C$ for some $C>0$.
The required estimates
\begin{equation}
 \label{CIsmooth2} 
  \nordvois{T_\ep 	u_{\ep,\indx}^I}{\IR^n \x  \Rcomp^c} = O(\ep^\infty) \text{ and }\nordvois{T_\ep	v_{\ep,\indx}^I}{\IR^n \x  \Rcomp^c} = O(\ep^\infty) 
\tag{\ref{CIsmoothBis}'}
\end{equation}
are fulfilled since Lemma \ref{cutx} from the Appendix implies that
\begin{equation}
\label{estim1}
	\|(1-\rho) T_\ep^* \gamma_{\indx}(\eta) T_\ep \uuepI\|_{L^2_x} \lesssim e^{-C/\ep} \text{ and } 	\|(1-\rho) T_\ep^* \gamma_{\indx}(\eta) T_\ep \uvepI\|_{L^2_x} \lesssim e^{-C/\ep}.
\end{equation}
Using the boundedness of the operator $T_\ep^* \gamma_{\indx} T_\ep$ from $L^2(\IR^n)$ to $L^2(\IR^n)$ and the relations
\begin{equation}
	\label{derivFBI}
 	\dpp_{y_j} T_\ep = T_\ep \dpp_{x_j},\,	\dpp_{x_j} T_\ep^*= T_\ep^* \dpp_{y_j},
\end{equation}
obtained by integrations by parts in the expressions of $T_\ep$ and $T_\ep^*$, one can show that the new initial data $(u_{\ep,\indx}^I,v_{\ep,\indx}^I)$ is also uniformly bounded w.r.t. $\ep$ in $H^1(\Omega)\x L^2(\Omega)$. 

Let $\rhou$ be a cut-off of $\Co_0^\infty(\IR^n,[0,1])$ supported in a compact $K_y\subset\Omega$ and satisfying 
\begin{equation*}
	\rhou(y)=1 \text{ if } \dist(y,\supp \rho)<C \text{ for some } C>0,
\end{equation*}
and $\phiu$ a cut-off of $\Co_0^\infty(\IR^n,[0,1])$ supported in $K_\eta \subset \IR^n\prive{0}$ s.t. $\phiu \equiv 1$ on $\Rcomp$. Without loss of generality, we assume that either the incident ray or the reflected one propagating in the positive sense is in the interior of the domain at the instant $T$ ($x_{0}^{T}(y,\eta) \in \Omega$ or $x_{1}^{T}(y,\eta) \in \Omega$) when $y$ varies in $K_y$ and $\eta$ in $\IR^n\prive{0}$. This is always possible upon reducing $T$ because the number of reflections for initial position and vector speed varying in $K_y \x \Par{\IR^n\prive{0}}$ is uniformly bounded (see Section 2.3 of \cite{BoAkAl09} for similar arguments). And similarly for the instant $-T$ for rays propagating in the negative sense. Then, the IBVP \eqref{MPb:gp1}-\eqref{MPb:gp2} with initial conditions \eqref{CIcut} has a family of approximate solutions $u_{\ep,\indx}^{appr}$ in $\Co^0([0,T],H^1(\Omega)) \cap \Co^1([0,T],L^2(\Omega))$ obtained as a summation of first order beams. A general result using a superposition of beams of any order was proven in \cite{BoAkAl09}, and it reads for first order beams as follows:
\begin{proposition}\emph{(\cite{BoAkAl09}, Theorem 1.1).}
Denote for $t\in [0,T]$ and $x \in \IR^n$ the following superposition of Gaussian beams
\begin{equation*}\begin{split}
&{u}_{\ep,\indx}^{appr}(t,x)\\
=&\ud \ep^{-\tnq+1}c_n\int_{\IR^{2n}} \rhou(y) \phiu(\eta)  T_\ep v_{\ep,\indx}^I(y,\eta) \Big(\somme{k=0,1}{} {\gb^k_\ep}'(t,x,y,\eta)\\
& \qquad \qquad \qquad \quad- \somme{k=0,-1}{} {\gb_\ep^{k}}'(-t,x,y,\eta)\Big)dy d\eta\\
&+ \ud \ep^{-\tnq+1}c_n\int_{\IR^{2n}}\rhou(y) \phiu(\eta) \ep^{-1} T_\ep u_{\ep,\indx}^I(y,\eta) \Big(\somme{k=0,1}{} \gb^{k}_\ep(t,x,y,\eta)\\
&\qquad \qquad \qquad \qquad + \somme{k=0,-1}{} \gb^{k}_\ep(-t,x,y,\eta)\Big) dy d\eta.
\end{split}\end{equation*}
Above, $\gb^0_\ep$, ${\gb^0_\ep}'$ are incident Gaussian beams with the same phase $\psi_0$ satisfying at $t=0$
\begin{equation}
\label{IncPhaseCIBis}
	\psi_0(0,x,y,\eta) = \eta \cdot (x-y)+ \frac i 2 (x-y)^2
\end{equation}
and different amplitudes $a^0_0$, ${a^0_0}'$ satisfying
\begin{equation}
\label{IncAmpCIBis}
	a_0^0(0,x,y,\eta) = \chi_d(x-y),\,
\Par{i\dt \psi_0 {a_0^0}'}(0,x,y,\eta) = \chi_d(x-y) + O(|x-y|).
\end{equation}
$\gb^{\pm 1}_\ep$ and ${\gb^{\pm 1}_\ep}'$ denote the associated reflected beams.
Then $u_{\ep,\indx}^{appr}$ is asymptotic to $u_{\ep,\indx}$ the exact solution of the problem \eqref{MPb:gp1}-\eqref{MPb:gp2} with initial conditions \eqref{CIcut} in the sense that
\begin{align*}
\underset{t\in[0,T]}{\sup}\norhom{u_{\ep,\indx} - u_{\ep,\indx}^{appr}} &\leq  C(\indx,\Omega,T)\, \sqrt \ep,\\
 \text{ and } \underset{t\in[0,T]}{\sup}\nordom{\dt u_{\ep,\indx} - \dt u_{\ep,\indx}^{appr}} &\leq  C(\indx,\Omega,T)\, \sqrt \ep.
\end{align*}
\end{proposition}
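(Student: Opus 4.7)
My approach is to apply the standard energy estimate for the mixed problem to the difference $w_\ep := u_{\ep,\indx} - u_{\ep,\indx}^{appr}$, which solves a wave equation with source $-Pu_{\ep,\indx}^{appr}$, boundary data $-Bu_{\ep,\indx}^{appr}$, and initial errors $u_{\ep,\indx}^I - u_{\ep,\indx}^{appr}|_{t=0}$ and $v_{\ep,\indx}^I - \dt u_{\ep,\indx}^{appr}|_{t=0}$. The Dirichlet or Neumann energy inequality for the wave operator bounds the $\Co^0([0,T],H^1(\Omega))\cap \Co^1([0,T],L^2(\Omega))$ norm of $w_\ep$ by a sum of the initial, volume, and boundary residues. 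The task then reduces to showing that each of these three sources is $O(\sqrt\ep)$ in the appropriate norm.

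For the volume residue, I would differentiate under the integral and invoke the expansion \eqref{Pgb}: the phases and amplitudes have been chosen so that the eikonal and first transport equations are satisfied on every ray up to the prescribed orders, giving $\nordom{P\gb_\ep^k(t,\cdot,y,\eta)} \lesssim \ep^{\nq+1/2}$ locally uniformly in $(y,\eta)$. Combined with the prefactor $\ep^{-\tnq+1}$ and a Cauchy--Schwarz estimate in the $(y,\eta)$ variables, bounded by $\nord{T_\ep v^I_{\ep,\indx}} + \ep^{-1}\nord{T_\ep u^I_{\ep,\indx}}$ — controlled via the $L^2$-isometry of $T_\ep$ and hypothesis \eqref{CIboundBis}, the $\ep^{-1}$ being absorbed by the Gaussian width of the beam's spatial profile — this yields $\nordom{P u_{\ep,\indx}^{appr}(t,\cdot)} \lesssim \sqrt\ep$ uniformly in $t$. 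The boundary residue is handled analogously using the cancellation estimate on $\|B(\gb_\ep^0 + \gb_\ep^{\pm 1})\|_{H^s([0,T]\x\dom)}$ recalled in Subsection \ref{Inc&ref}; here it is essential that $T$ has been reduced so that at most one reflection occurs on the support of $\rhou \phiu$, which is precisely the preparatory step made before the statement.

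For the initial data, at $t=0$ the reflected pieces $\gb^{\pm 1}_\ep$ are separated from the support of $\rhou$ since rays need a positive minimal time to reach $\dom$, so their contribution is $O(\ep^\infty)$ on any compact subset of $\Omega$. The surviving incident pieces reassemble, via \eqref{IncPhaseCIBis}--\eqref{IncAmpCIBis} and the identity $\gamma_{\indx}\phiu = \gamma_{\indx}$, into $\rho T_\ep^* \gamma_{\indx} T_\ep \uuepI = u^I_{\ep,\indx}$ (and similarly for $v^I$) up to $O(\sqrt\ep)$, thanks to the FBI inversion formula $T_\ep^*T_\ep = Id$ and the fact that the Gaussian factor $\chi_d(x-y)$ localizes around the diagonal. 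The main obstacle will be tracking how the $\ep^{-\tnq+1}$ prefactor interacts with the $L^2_x$-norm of the Gaussian beam under the integral: this is resolved by reinterpreting the superposition as an application of $T_\ep^*$ (or a Gaussian-damped variant thereof) to a weighted symbol, which transfers control of the global $L^2$-norm onto the uniformly bounded quantities $T_\ep u^I_{\ep,\indx}$ and $T_\ep v^I_{\ep,\indx}$, at which point the isometry of $T_\ep$ closes the estimate with the required power of $\ep$.
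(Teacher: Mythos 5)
Your skeleton --- the energy estimate for the mixed problem applied to $u_{\ep,\indx}-u^{appr}_{\ep,\indx}$, with the interior, boundary and initial residues each shown to be $O(\sqrt\ep)$ --- is exactly the architecture of the proof in \cite{BoAkAl09}, to which the paper defers (the paper proves nothing here itself; it only recalls, in Appendix \ref{AppB}, the one estimate it identifies as the engine of that proof). But your treatment of the volume residue has a genuine gap. Each first order beam satisfies only $\nordom{P\gb^k_\ep(t,\cdot,y,\eta)}\lesssim\ep^{\nq-\ud}$ --- the paper states $\nordom{\ep^{-\nq+1}P\gb_\ep(t,.)}=O(\sqrt\ep)$ --- not $\ep^{\nq+\ud}$ as you claim; and, more importantly, Cauchy--Schwarz in $(y,\eta)$ then yields only
\begin{equation*}
\nordom{Pu^{appr}_{\ep,\indx}(t,\cdot)}\ \lesssim\ \ep^{-\tnq+1}\cdot\ep^{\nq-\ud}\ =\ \ep^{\ud-\nd}
\end{equation*}
(times the $O(1)$ data norms), which is never $O(\sqrt\ep)$ and diverges for $n\geq 2$; even granting your too-strong per-beam bound one reaches only $\ep^{\frac{3}{2}-\nd}$. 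Cauchy--Schwarz treats the beams as if they added incoherently in norm, whereas the whole point is their near-orthogonality: the superposition over $\IR^{2n}$ gains the factor $\ep^{\tnq+\frac{|\alp|}{2}}$ at the level of the operator norm $L^2(\IR^{2n})\rightarrow L^2(\IR^n)$ --- this is Proposition \ref{appOp} (Lemma 3.3 of \cite{BoAkAl09}), proved by a Schur-type kernel estimate exploiting the oscillation in $\theta$ and the Gaussian decay, and it is precisely the tool the paper singles out. With it the bookkeeping closes: in the expansion \eqref{Pgb} the $\ep^{-2}$ term vanishes to order $2$ on the ray, hence carries $|\alp|=3$ and contributes $\ep^{-\tnq+1}\cdot\ep^{-2}\cdot\ep^{\tnq+\frac{3}{2}}=\sqrt\ep$, the $\ep^{-1}$ term with $|\alp|=1$ likewise gives $\sqrt\ep$; the boundary residue requires the same superposition estimates in the trace norms, not per-beam bounds glued by Cauchy--Schwarz. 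You do invoke this mechanism (reinterpreting the sum through $T_\ep^*$), but only for the initial-data matching, where it is indeed the right idea; it is equally indispensable for the volume and boundary terms.

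A second concrete error: $\ep^{-1}\nord{T_\ep u^I_{\ep,\indx}}$ is \emph{not} controlled by the isometry of $T_\ep$ together with \eqref{CIboundBis} --- these give only $\ep^{-1}\nord{u^I_{\ep,\indx}}=O(\ep^{-1})$ --- and no ``Gaussian width of the beam's spatial profile'' can absorb the factor $\ep^{-1}$ (the width only produces the per-beam $\ep^{\nq}$ normalization already counted in the prefactor). What saves this term is the low-frequency truncation built into the data \eqref{CIcut}: since $\gamma_{\indx}$ vanishes for $|\eta|\leq 2r_0$, the estimate \eqref{CIsmooth2} holds, and combined with the $H^1$ bound and the intertwining relations \eqref{derivFBI} it yields $\nordye{\ep^{-1}T_\ep u^I_{\ep,\indx}}\lesssim 1$, indeed $\nord{u^I_{\ep,\indx}}\lesssim\ep$ (Lemma \ref{estimu}); this is also why the constant in the Proposition depends on $\indx$. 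Your initial-data paragraph is otherwise sound --- at $t=0$ the incident phase \eqref{IncPhaseCIBis} is the FBI kernel phase, the reflected beams are $O(\ep^\infty)$ near $\supp\rho$, and the incident sum reassembles $\rho T_\ep^*\gamma_{\indx}T_\ep$ up to $O(\sqrt\ep)$ --- but as written the proposal cannot produce the claimed $\sqrt\ep$ rate without importing Proposition \ref{appOp} and Lemma \ref{estimu} at the two places indicated.
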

We refer to  \cite{BoAkAl09} for further details, and just mention that the proof relies on the use of a family of approximate operators acting from $L^2(\IR^{2n})$ to $L^2(\IR^{n})$. A simple version of the estimate of these operators norms is recalled in Section \ref{AppB} of the Appendix.

\subsubsection{Expression of the phases and amplitudes} \label{fstBeams}

In order to compute the first order beams, we begin by analyzing the relationship between the incident phase and amplitudes, and the Jacobian matrix of the incident flow. A similar relationship involving the reflected phases and amplitudes and the reflected flows will be also given.

The requirement \eqref{PhaseOneBis} for the incident phase implies that 
$$\frac{d}{dt}\Par{ \psi_0(t,x_0^t)} = \dt \psi_0(t,x_0^t) +\dpp_x \psi_0(t,x_0^t) \cdot \dot{x_0}^t=0.$$
Taking into account the initial null value $\psi_0(0,y) = 0$ chosen in  \eqref{IncPhaseCIBis}, one gets a null phase on the ray
\begin{equation*}
	\psi_0(t,x^t_0) =0.
\end{equation*}
With the aim of computing $\dpp_x^2\psi_0(t,x^t_0)$, we note that the Jacobian matrix of the bicharacteristic $F_0^t=D\varphi_0^t$ satisfies the linear ordinary differential system
\begin{equation*}
\left\{
\begin{array}{l}
\frac{d}{dt} F_0^t= J H(x_0^t,\xi_0^t) F_0^t, \\
F_0^0=Id,                                    \\
\end{array}
\right.
\end{equation*}
where $J=\left(\begin{array}{cc}
0      &Id  \\
-Id   &0    \\
\end{array}\right)$ is the standard symplectic matrix. 
Writing $F_0^t$ as
\begin{equation*}
F_0^t=\left(\begin{array}{cc}
D_y x_0^t        &D_\eta x_0^t  \\
D_y \xi_0^t      &D_\eta \xi_0^t     \\
\end{array}\right)
\end{equation*}
leads to the following ordinary differential system on $(U_0^t,V_0^t)=(D_y x_0^t+i D_\eta x_0^t,$
\\
$D_y \xi_0^t+i D_\eta \xi_0^t)$
\begin{align}
\label{eqU}
\frac{d}{dt} U_0^t&=H_{21}(x_0^t,\xi_0^t)U_0^t+H_{22}(x_0^t,\xi_0^t)V_0^t,\\
\label{eqV}
\frac{d}{dt} V_0^t&=-H_{11}(x_0^t,\xi_0^t)U_0^t-H_{12}(x_0^t,\xi_0^t)V_0^t.
\end{align}
Note that $F_0^t$ is a symplectic matrix, i.e.
\begin{equation*} 
(F_0^t)^TJ F_0^t=J.
\end{equation*}
Using the symmetry of the following matrices
\begin{equation*}
(D_y x_0^t)^T D_y \xi_0^t, (D_\eta x_0^t)^T D_\eta \xi_0^t, D_y x_0^t (D_\eta x_0^t)^T,\text{ and }D_y \xi_0^t  (D_\eta \xi_0^t)^T 
\end{equation*}
and the relations
\begin{equation*}
(D_y x_0^t)^T D_\eta \xi_0^t -(D_y \xi_0^t)^T D_\eta x_0^t =Id \text{ and }D_y x_0^t (D_\eta \xi_0^t)^T-{D_\eta x_0^t} (D_y \xi_0^t)^T =Id,
\end{equation*}
one has
\begin{equation}
\label{propos}
(U_0^t)^T V_0^t = (V_0^t)^T U_0^t, \, (V_0^t)^T\bar U_0^t - (U_0^t)^T\bar V_0^t=2i Id \text{ and }U_0^t \text{ is invertible}.
\end{equation}
Putting together \eqref{eqU}, \eqref{eqV} and \eqref{propos} shows that $V_0^t(U_0^t)^{-1}$ is a symmetric matrix with a positive definite imaginary part and fulfills the Riccati equation \eqref{riccatiBis} with initial value $iId$. Since this is the initial condition for $\dpp_x^2 \psi_0(t,x_0^t)$ given in \eqref{IncPhaseCIBis}, it follows that
\begin{equation}
\label{incHess}
	\dpp_x^2 \psi_0(t,x_0^t) = V_0^t(U_0^t)^{-1}.
\end{equation}
The incident beams amplitudes are computed as follows. Using \eqref{PhaseOneBis} and the Hamiltonian system satisfied by $(x_0^t,\xi_0^t)$, the equation \eqref{evolBis} at order zero yields the following transport equation for the value of the amplitude on the ray \cite{KaKuLa}
\begin{equation}
\label{eqAmpl}
\frac{d}{dt}\Par{{a_0^{0}}^{(')}(t,x_0^t)}+\ud\Tr\Par{H_{21}(x_0^t,\xi_0^t)+H_{22}(x_0^t,\xi_0^t)\dpp_x^2\psi_0(t,x_0^t)}{a_0^0}^{(')}(t,x_0^t)=0,
\end{equation}
which may be written using the matrices $U_0^t$ and $V_0^t$ as
\begin{equation*}
\frac{d}{dt}\Par{{a_0^0}^{(')}(t,x_0^t)}+\ud\Tr\Croch{\Par{H_{21}(x_0^t,\xi_0^t)U_0^t+H_{22}(x_0^t,\xi_0^t)V_0^t}(U_0^t)^{-1}}{a_0^0}^{(')}(t,x_0^t)=0. 
\end{equation*}
The time evolution for $U_0^t$, see \eqref{eqU}, combined with the choice of the initial values ${a_0^0}(0,y) =1$ and ${a_0^0}'(0,y) = (-ic(y)|\eta|)^{-1}$ from \eqref{IncAmpCIBis}, yields
\begin{equation*}
a_0^0(t,x_0^t)= \Par{\det U_0^t}^{-\ud} \text{ and } {a_0^0}'(t,x_0^t)= i (c(y)|\eta|)^{-1} \Par{\det U_0^t}^{-\ud}.
\end{equation*}
Above the square root is defined by continuity in $t$ from $1$ at $t=0$.

The expression of the reflected phases $\psi_k$, $k=\pm 1$, is similar to the incident phase. In fact, since $\frac{d}{dt} \Par{\psi_k(t,x_k^t)} = 0$ and $\psi_k(T_k,x_0^{T_k})=\psi_{0}(T_k,x_0^{T_k})$ because of the requirement \ref{refPhaseBis} p.\pageref{refPhaseBis}, we get
\begin{equation*}
	\psi_k(t,x_k^t) =0.
\end{equation*}
We then apply the general relation between incident and reflected beams phases given in Lemma \ref{Inc&Ref} from the Appendix, to compute the Hessian matrices of $\psi_{\pm 1}$ on the rays. As far as we know, the result stated in this Lemma is particularly simple enough so that we stated it in the Appendix \ref{AppA}. The matrices $\dpp_x^2\psi_{\pm 1}(t,x_{\pm 1}^{\pm t})$ can also be computed by solving the Riccati equations with the proper values at the instants of reflections $t = T_{\pm 1}$ (see eg. \cite{Norris2,TaEnTs09}).
One gets (see Appendix \ref{AppA})
\begin{equation*}
\dpp_x^2 \psi_k(t,x_k^t)=V_k^t (U_k^t)^{-1} \text{ where }U_k^t=D_y x_k^t+i D_\eta x_k^t \text{ and }V_k^t=D_y \xi_k^t+i D_\eta \xi_k^t.
\end{equation*}
As $\varphi_k^t$ is symplectic, $(U_k^t,V_k^t)$ share the same properties \eqref{propos} as $(U_0^t,V_0^t)$
\begin{equation}
\label{propk}
(U_k^t)^T V_k^t = (V_k^t)^T U_k^t,\, (V_k^t)^T\bar U_k^t - (U_k^t)^T\bar V_k^t=2iId \text{ and } U_k^t \text{ is invertible}.
\end{equation}
The reflected amplitudes evaluated on the rays have an expression similar to the incident amplitudes (see Appendix \ref{AppA})
\begin{equation*}
a_0^{k}(t,x_k^t) = -s i \Par{\det U_k^t}^{-\ud} \text{ and } {a_0^{k}}'(t,x_k^t)= s (c(y)|\eta|)^{-1} \Par{\det U_k^t}^{-\ud} \text{ for } k= \pm 1,
\end{equation*}
where the square root is defined by continuity from $i \Par{\det U_0^{T_k}}^{-\ud}$ at $t=T_k$, $s=-1$ for the Dirichlet boundary condition and $s=1$ for the Neumann condition.

We summarize the previous form of the beams in the following result:
\begin{lemma}
\label{FirstGB}
For $k = 0, \pm 1$, the incident and reflected beams $\gb_\ep^k$ have the form
\begin{equation*}
	{\gb_\ep^k}^{(')}(t,x) = \beta_k  \chi_d(x-x_k^t) a_k^{(')}(t) e^{i \psi_k/\ep},
\end{equation*}
with
\begin{align*}
	\beta_0=1 , \, \beta_{1} = \beta_{-1} = -s i, \\
	 a_k(t)=  [\det U_k^t]^{-\ud} ,\,  a_k'(t) =  i (c(y)|\eta|)^{-1} [\det U_k^t]^{-\ud}, \\
	\psi_k = \xi_k^t\cdot(x-x_k^t)+\frac{i}{2}(x-x_k^t)\cdot \Lambda_k(t)(x-x_k^t),\, \text{and } \Lambda_k(t) = -i V_k^t (U_k^t)^{-1}.
\end{align*}
\end{lemma}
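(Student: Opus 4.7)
The plan is to assemble the Lemma by combining the explicit formulas derived throughout the preceding subsections, treating the incident case ($k=0$) first and then transferring the information to the reflected case ($k=\pm 1$) via the relation between incident and reflected beams recorded in the Appendix.

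First I would treat the incident beam. By construction \eqref{quadPsi}, the phase $\psi_0$ is a quadratic polynomial in $(x-x_0^t)$ whose coefficients are determined by $\psi_0(t,x_0^t)$, $\dpp_x\psi_0(t,x_0^t)$ and $\dpp_x^2\psi_0(t,x_0^t)$. The combination of \eqref{PhaseOneBis} with the initial value chosen in \eqref{IncPhaseCIBis} has already yielded $\psi_0(t,x_0^t)=0$ and $\dpp_x\psi_0(t,x_0^t)=\xi_0^t$; meanwhile \eqref{incHess} identifies $\dpp_x^2\psi_0(t,x_0^t)$ with $V_0^t(U_0^t)^{-1}$. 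Setting $\Lambda_0(t)=-iV_0^t(U_0^t)^{-1}$ one reads off the announced quadratic form. For the amplitude I would use that $a_0^0(t,x)=\chi_d(x-x_0^t)\,a_0^0(t,x_0^t)$ by construction, then integrate the transport ODE \eqref{eqAmpl} rewritten through $U_0^t$: the standard Liouville-type identity $\frac{d}{dt}\det U_0^t=\Tr\bigl[(H_{21}U_0^t+H_{22}V_0^t)(U_0^t)^{-1}\bigr]\det U_0^t$, which follows from \eqref{eqU}, converts \eqref{eqAmpl} into $\frac{d}{dt}\log(a_0^0\,(\det U_0^t)^{1/2})=0$, so the initial values ${a_0^0}(0,y)=1$ and ${a_0^0}'(0,y)=(-ic(y)|\eta|)^{-1}$ coming from \eqref{IncAmpCIBis} yield the claimed expressions; the square root is unambiguously fixed by continuity in $t$ starting from $1$ at $t=0$, which is meaningful because $U_0^t$ stays invertible by \eqref{propos}.

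Next I would extend these formulas to the reflected beams $\gb_\ep^{\pm 1}$. The value of $\psi_k$ on its ray is $0$ since $\frac{d}{dt}\psi_k(t,x_k^t)=0$ and the matching condition \ref{refPhaseBis} at $t=T_k$ forces $\psi_k(T_k,x_0^{T_k})=\psi_0(T_k,x_0^{T_k})=0$. The gradient on the ray is $\xi_k^t$ by \eqref{PhaseOneBis}. For the Hessian I would invoke Lemma \ref{Inc&Ref} of the Appendix, which provides the algebraic identity expressing $\dpp_x^2\psi_k(t,x_k^t)$ directly in terms of the reflected Jacobian; combined with \eqref{propk} this gives $\dpp_x^2\psi_k(t,x_k^t)=V_k^t(U_k^t)^{-1}$, so again $\Lambda_k(t)=-iV_k^t(U_k^t)^{-1}$. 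The amplitude $a_0^k$ satisfies the same transport equation as $a_0^0$ along its ray (the eikonal and evolution equations have the same form in the reflected region), hence the same Liouville identity applied to $U_k^t$ gives $a_0^k(t,x_k^t)=C_k\,(\det U_k^t)^{-1/2}$; the constant $C_k$ is determined by the matching condition \ref{refAmp} at $t=T_k$ together with the boundary condition (Dirichlet or Neumann), giving the factor $-si$ for $a_0^k$ and the corresponding factor for ${a_0^k}'$, with the square root prolonged by continuity from the value $i(\det U_0^{T_k})^{-1/2}$ at $t=T_k$ as stated.

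Finally I would collect these pieces into the format of the Lemma, reading off $\beta_0=1$, $\beta_{\pm 1}=-si$, and the expressions for $a_k(t)$, $a_k'(t)$ and the quadratic $\psi_k$. The main technical point is the consistent choice of branch for $(\det U_k^t)^{-1/2}$: one needs the non-vanishing of $\det U_k^t$ (ensured by \eqref{propos}--\eqref{propk}) and the matching of branches at the reflection instant, which is precisely what Lemma \ref{Inc&Ref} in the Appendix handles. Everything else is essentially a transcription of formulas already established.
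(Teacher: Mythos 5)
Your proposal is correct and follows essentially the same route as the paper: Lemma \ref{FirstGB} is presented there as a summary, and its justification is exactly the chain you reproduce --- \eqref{quadPsi}, \eqref{PhaseOneBis}, \eqref{IncPhaseCIBis} and \eqref{incHess} for the incident phase, the Liouville-type identity for $\det U_0^t$ derived from \eqref{eqU} applied to the transport equation \eqref{eqAmpl} for the incident amplitudes, and Lemma \ref{Inc&Ref} at order one (combined with \eqref{incHess} and \eqref{sronp}) for the reflected Hessians $V_k^t(U_k^t)^{-1}$. One correction of attribution: the branch and sign matching of $(\det U_k^t)^{-\ud}$ at the reflection instant is \emph{not} handled by Lemma \ref{Inc&Ref}, which concerns only the phases; in the paper it rests on the separately derived identity \eqref{Uref}, $U_k^{T_k}=\Par{Id-2\nu(x_0^{T_k})\nu(x_0^{T_k})^T}U_0^{T_k}$ (obtained by differentiating $x_k^{T_k}=x_0^{T_k}$ and eliminating $\dpp_{y,\eta}T_k$), whence $\det U_k^{T_k}=-\det U_0^{T_k}$, which together with the matching condition \ref{refAmp} (giving $a_0^k(T_k,x_0^{T_k})=s\,a_0^0(T_k,x_0^{T_k})$) fixes both the constant $-si$ and the branch $i\,(\det U_0^{T_k})^{-\ud}$ at $t=T_k$; your argument needs this determinant relation to actually produce the stated $\beta_{\pm 1}$, so you should cite or derive it rather than deferring to Lemma \ref{Inc&Ref}.
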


\subsubsection{Gaussian integrals}

It follows that the approximate solution $u_{\ep,\indx}^{appr}$ has the form (recall the dependence of Gaussian beams w.r.t. variables $(y,\eta )$)
\begin{equation*}\begin{split}
&{u}^{appr}_{\ep,\indx}(t,x)\\
=&\ud\ep^{-\tnq+1}c_n\int_{\IR^{2n}} \rhou(y) \phiu(\eta)  \somme{k=0,1}{} \chi_d(x-x_k^t)\beta_k p_{\ep,k}(t,y,\eta) e^{i\psi_k(t,x,y,\eta)/\ep} dy d\eta  \\
&+\ud\ep^{-\tnq+1}c_n\int_{\IR^{2n}} \rhou(y) \phiu(\eta)  \sum_{k=0,-1} \chi_d(x-x_{k}^{-t}) \beta_k q_{\ep,k}(-t,y,\eta)\\ 
& \qquad \qquad \qquad \qquad \qquad \qquad \qquad \qquad e^{i\psi_{k}(-t,x,y,\eta)/\ep} dy d\eta,
\end{split}\end{equation*}
with 
\begin{align*}
           p_{\ep,k}(t,y,\eta) &= a_k(t,y,\eta)\ep^{-1} T_\ep u_{\ep,\indx}^I(y,\eta)+a_k'(t,y,\eta) T_\ep v_{\ep,\indx}^I(y,\eta),  \nonumber\\
\text{and }q_{\ep,k}(t,y,\eta) &= a_k(t,y,\eta)\ep^{-1} T_\ep u_{\ep,\indx}^I(y,\eta)-a_k'(t,y,\eta) T_\ep v_{\ep,\indx}^I(y,\eta). 
\end{align*}

Because of the phases expression given in \eqref{quadPsi}, time and spatial derivatives of ${u}^{appr}_{\ep,\indx}$ may be written as a sum of integrals of the form
\begin{equation*}\begin{split}
 \ep^{-\tnq}  \int_{\IR^{2n}} &\rhou(y)  \phiu(\eta) f_\ep(y,\eta)  \ep^{j}(x-x_k^{t})^\alp r_{j,\alp}^k(t,x,y,\eta)\\
 & e^{i\psi_k( t,x,y,\eta) /\ep} dy d\eta , \, j,k=0,1, \, |\alp|\leq 2,
\end{split}\end{equation*}
arising from differentiation of $\gb_\ep^{0}(t,.)$ and $\gb_\ep^{1}(t,.)$. Other terms of the same form originate from derivatives of $\gb_\ep^{0}(-t,.)$ and $\gb_\ep^{-1}(-t,.)$.
$f_\ep$ stands for $\ep^{-1} T_\ep u_{\ep,\indx}^I$ or $T_\ep v_{\ep,\indx}^I$ and $r_{j,\alp}^k$ are smooth functions vanishing for $|x-x_k^t| \geq d$.
\\
For a function $f$ depending on $(t,x,z,\theta) \in \IR^{n+1} \x \Bic$ and $k=0,\pm 1$, let
\begin{equation*}
\trak{f}(t,x,z,\theta) =  f(t,x,\Par{\varphi_k^{t}}^{-1}(z,\theta)). 
\end{equation*}
Set $K_{z,\theta}^k(t) = \varphi_k^t(K_y\x K_\eta)$. Let $\Pi_k(t)$ be a cut-off of $\Co_0^\infty(\IR^{2n}, [0,1])$ supported in $\Bic$ and satisfying $\Pi_k(t) \equiv 1$ on $K_{z,\theta}^k(t)$.
The volume preserving change of variables 
\begin{equation*}
	(z,\theta)=\varphi_k^{t}(y,\eta)
\end{equation*}
transforms the previous integrals as
\begin{equation}
\label{dtutilde}
 \ep^{-\tnq}    \int_{\IR^{2n}} \Pi_k(t) \trak{\rhou \ox \phiu} \trak{ f_\ep} \ep^{j}(x-z)^\alp\trak{\Par{  r_{j,\alp}^k}}  e^{i\trak{\psi_k} (t,x,z,\theta) /\ep} dz d\theta,\, j,k=0,1, \, |\alp|\leq 2.
\end{equation}
We can write the leading terms obtained for $j=0$ and $\alp=0$ using Gaussian type integrals $I_\ep(h,\Phi)$ defined as
\begin{equation*}
I_\ep(h,\Phi)(t,x)=\ep^{-\tnq}c_n\int_{\IR^{2n}} h(t,z,\theta) e^{i\Phi(t,x,z,\theta)/\ep} dz d\theta,
\end{equation*}
for a given phase function $\Phi \in \Co^\infty(\IR^{n+1}_{t,x} \x \Bic,\IC)$ polynomial of order $2$ in $x-z$ and satisfying, for $t\in [0,T]$ and $(z,\theta)\in \Bic$
\begin{equation}
\label{propPhaseBis}
\Phi(t,z,z,\theta) \text{ is real},\,\dpp_x\Phi(t,z,z,\theta)=\theta,\,\Im \dpp_x^2\Phi(t,z,z,\theta) \text{ is positive definite},
\end{equation}
and a given amplitude function $h\in \Co^0([0,T],L^2(\IR^{2n}_{z,\theta}))$ supported for every fixed \\
$t \in [0,T]$  in a compact of $\Bic$.
By Proposition \ref{appOp} in the Appendix, one has
$$\|\int_{\IR^{2n}} h(t,z,\theta) \chi(x-z) e^{i\Phi(t,x,z,\theta)/\ep} dz d\theta \|_{L^2_x} \lesssim  \|h(t,.)\|_{L^2_{z,\theta}}.$$
Noticing that $e^{i\Phi/\ep}$ is exponentially decreasing for $|x-z| \geq 1$, one can use the following crude estimate 
\begin{equation}
\label{noCutof}
\|\int_{|x-z|\geq a}   h(t,z,\theta)  e^{i\Phi(t,x,z,\theta)/\ep} dz d\theta\|_{L^2_x} \lesssim  e^{-C/\ep}\|h(t,.)\|_{L^2_{z,\theta}} 
\text{ for }a>0
\end{equation}
to deduce that $I_\ep(h,\Phi)(t,.)$ is uniformly bounded  w.r.t. $\ep$ in $L^2_x$. The same notation $I_\ep(h,\Phi)$ will be also used for a vector valued function $h$. 

The contribution of the terms \eqref{dtutilde} with $j=1$ or $|\alp| \geq 1$ to the derivatives of $u^{appr}_{\ep,\indx}$ is of order $\sqrt \ep$ as stated in the following Lemma, whose proof is given Appendix \ref{AppB} and relies on the approximation operators defined therein.
\begin{lemma}
\label{dtudxu}
$\dt u^{appr}_{\ep,\indx}(t,.)$ is uniformly bounded w.r.t. $\ep$ in $L^2(\IR^n)$ and satisfies
\begin{equation*}
\begin{split}
\dt u^{appr}_{\ep,\indx}(t,x) =& \ud \Par{ v^+_{t,\ep}(t,x) - v^-_{t,\ep}(-t,x)} \\
&+O(\sqrt\ep) \text{ in } L^2(\IR^n) \text{ uniformly w.r.t. } t \in [0,T],	
\end{split}
\end{equation*}
where $(v^+_{t,\ep})$ and $(v^-_{t,\ep})$ are sequences of $L^2(\IR^n)$ uniformly bounded w.r.t. $\ep$ given by 
\begin{align*}
v^+_{t,\ep} &= \somme{k=0,1}{} \beta_k I_\ep(-i c(z)|\theta| \Pi_k \trak{\rhou\ox \phiu} \trak{p_{\ep,k}} , \trak{\psi_k}), \\
v^-_{t,\ep} &= \somme{k=0,-1}{}\beta_k I_\ep(-i c(z)|\theta| \Pi_k  \trak{\rhou\ox \phiu} \trak{q_{\ep,k}}, \trak{\psi_k}).	
\end{align*}
Likewise, $\dpp_x u^{appr}_{\ep,\indx}(t,.)$ is uniformly bounded w.r.t. $\ep$ in $L^2(\IR^n)^n$ and satisfies
\begin{equation*}
\begin{split}
\dpp_x u^{appr}_{\ep,\indx}(t,x) =& \ud \Par{v^+_{x,\ep}(t,x) + v^-_{x,\ep}(-t,x)} \\
&+O(\sqrt\ep) \text{ in } L^2(\IR^n)^n \text{ uniformly w.r.t. } t \in [0,T],	
\end{split}
\end{equation*}
where $(v^+_{x,\ep})$ and $(v^-_{x,\ep})$ are sequences of $L^2(\IR^n)^n$ uniformly bounded w.r.t. $\ep$ given by 
\begin{align*}
 v^+_{x,\ep} &= \somme{k=0,1}{} \beta_k I_\ep(i\theta \Pi_k \trak{\rhou\ox \phiu} \trak{p_{\ep,k}} , \trak{\psi_k}), \\
 v^-_{x,\ep} &= \somme{k=0,-1}{} \beta_k I_\ep(i\theta \Pi_k \trak{\rhou\ox \phiu} \trak{q_{\ep,k}} , \trak{\psi_k}).	
\end{align*}
\end{lemma}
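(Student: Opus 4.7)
My plan is to differentiate the explicit superposition formula for $u^{appr}_{\ep,\indx}$ under the integral sign, to isolate the dominant contribution coming from the phase factor $e^{i\psi_k/\ep}$, and to show that every other term is $O(\sqrt\ep)$ in $L^2(\IR^n)$ via the approximation operator estimates of Appendix \ref{AppB}.

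First, applying $\dt$ (resp.\ $\dpp_x$) to the integrand $\chi_d(x-x_k^t)\,a_k^{(\prime)}(t,y,\eta)\,e^{i\psi_k/\ep}$ and using the product rule, only the term in which the derivative hits $e^{i\psi_k/\ep}$ carries the factor $\ep^{-1}$. Since $\psi_k$ is polynomial of degree two in $x-x_k^t$ by Lemma \ref{FirstGB}, the Taylor expansions of $\dt\psi_k$ and $\dpp_x\psi_k$ around $x=x_k^t$ are exact, and \eqref{PhaseOneBis} yields the on-ray values $\dt\psi_k(t,x_k^t,y,\eta)=-c(x_k^t)|\xi_k^t|$ and $\dpp_x\psi_k(t,x_k^t,y,\eta)=\xi_k^t$. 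All remaining pieces ($\dt$ or $\dpp_x$ acting on $\chi_d$ or $a_k^{(\prime)}$, and the linear or quadratic off-ray Taylor remainders of $\dt\psi_k$ and $\dpp_x\psi_k$) fit the generic template \eqref{dtutilde} with $j=1$ or $|\alpha|\geq 1$.

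For the leading contribution I perform the symplectic change of variables $(z,\theta)=\varphi_k^t(y,\eta)$, which sends $x_k^t\mapsto z$ and $\xi_k^t\mapsto\theta$; the on-ray value of $\dt\psi_k$ becomes $-c(z)|\theta|$, and one directly recognises the integrand of $v^+_{t,\ep}$ up to the cut-off $\chi_d(x-z)$, which can be removed modulo $O(e^{-C/\ep})$ thanks to \eqref{noCutof}. For the backward-time beams the chain rule $\dt[\gb^k_\ep(-t,\cdot)]=-[\dt\gb^k_\ep](-t,\cdot)$ combined with the sign pattern of $u^{appr}_{\ep,\indx}$ produces precisely the $v^I$-sign flip that distinguishes $q_{\ep,k}$ from $p_{\ep,k}$, yielding the $-\ud v^-_{t,\ep}(-t,x)$ contribution. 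For $\dpp_x$ no time reversal occurs, so the forward and backward contributions add and one obtains $\ud v^+_{x,\ep}(t,x)+\ud v^-_{x,\ep}(-t,x)$. The uniform $L^2$-boundedness of $v^\pm_{t,\ep},v^\pm_{x,\ep}$ follows from Proposition \ref{appOp}; the factor $|\theta|$ (resp.\ $\theta$) in the amplitude counteracts the $\ep^{-1}$ sitting inside $p_{\ep,k}$ and $q_{\ep,k}$, thanks to the $H^1$ control of $u^I_{\ep,\indx}$ from \eqref{CIboundBis} combined with the intertwining \eqref{derivFBI}.

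The hard part, and the main obstacle, is showing that every remainder integral in \eqref{dtutilde} with $(j,|\alpha|)\ne(0,0)$ contributes only $O(\sqrt\ep)$ in $L^2_x$. The mechanism is that the Gaussian localization $e^{-(x-z)\cdot \Im \dpp_x^2 \trak{\psi_k}(x-z)/(2\ep)}$, effective by the positive definiteness \eqref{PhaseTwoBis}, concentrates $x-z$ at scale $\sqrt\ep$, so each factor of $(x-z)$ in the amplitude costs $\sqrt\ep$ and each explicit power $\ep^j$ is a direct gain. The delicate case is $(j,|\alpha|)=(0,1)$, i.e.\ the linear Taylor remainder of $\dt\psi_k$ (or $\dpp_x\psi_k$): the naive $L^2\to L^2$ bound from Proposition \ref{appOp} only gives $O(1)$, and one must exploit the Gaussian structure more finely -- either by an integration by parts in $\theta$ that trades $(x-z)$ against $\ep \dpp_\theta$ acting on the amplitude, or by the refined operator estimate of Appendix \ref{AppB} -- to extract the missing $\sqrt\ep$.
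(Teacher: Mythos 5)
Your overall route is the paper's: differentiate under the integral sign, keep only the term where $\dpp_{t,x}$ hits $e^{i\psi_k/\ep}$ at leading order, classify everything else in the template \eqref{dtutilde}, pass to the variables $(z,\theta)=\varphi_k^t(y,\eta)$, and discard $\chi_d(x-z)$ via \eqref{noCutof}; your sign bookkeeping for the time-reversed beams (hence the $p_{\ep,k}$ versus $q_{\ep,k}$ split and the relative signs in $v^{\pm}_{t,\ep}$, $v^{\pm}_{x,\ep}$) is correct. One remark before the main issue: your ``delicate case'' $(j,|\alp|)=(0,1)$ is not delicate. Proposition \ref{appOp} is stated with the gain $\ep^{\tnq+|\alp|/2}$ already built in -- the Gaussian concentration at scale $\sqrt\ep$ is precisely what that operator estimate codifies -- so this term is $O(\sqrt\ep)$ at once, and no integration by parts in $\theta$ is needed; the ``refined operator estimate of Appendix \ref{AppB}'' you invoke as a fallback is the very estimate the paper applies to all cases $(j,|\alp|)\ne(0,0)$ uniformly.

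The genuine gap is the uniform $L^2(\IR^{2n})$ bound on the densities fed to these operators, specifically on $\ep^{-1}T_\ep u^I_{\ep,\indx}$; this bound is required both to apply Proposition \ref{appOp} to each term of \eqref{dtutilde} and to get the uniform boundedness of $v^{\pm}_{t,\ep}$, $v^{\pm}_{x,\ep}$. Your justification -- that the factor $|\theta|$ in the amplitude counteracts the $\ep^{-1}$, thanks to \eqref{CIboundBis} and \eqref{derivFBI} -- does not suffice. Differentiating the FBI kernel and using $\dpp_{y_j}T_\ep=T_\ep\dpp_{x_j}$ gives $\ep^{-1}\eta_j T_\ep u^I_{\ep,\indx}=-iT_\ep\dpp_{x_j}u^I_{\ep,\indx}+O\big(\ep^{-1/2}\nord{u^I_{\ep,\indx}}\big)$ in $L^2_{y,\eta}$, so $H^1$ boundedness alone leaves an $\ep^{-1/2}$ loss; in fact for a fixed nonzero $u^I\in H^1$ independent of $\ep$ one computes $\nordye{\ep^{-1}\eta_j T_\ep u^I}\sim\ep^{-1/2}$, so even the $|\theta|$-weighted quantity is unbounded for general data satisfying \eqref{CIboundBis}. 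What closes the argument is the low-frequency truncation built into the prepared data: by \eqref{CIsmooth2}, $T_\ep u^I_{\ep,\indx}$ is $O(\ep^\infty)$ outside the ring $\Rcomp$, where $|\eta|\geq r_0>0$; a first pass through the identity above then yields $\nord{u^I_{\ep,\indx}}\lesssim\sqrt\ep$, and bootstrapping the same identity once more gives $\nordye{\ep^{-1}T_\ep u^I_{\ep,\indx}}\lesssim 1$. This is exactly Lemma \ref{estimu}, which your proposal never invokes and cannot do without: the $\ep^{-1}$ is absorbed by the frequency localization of $(u^I_{\ep,\indx},v^I_{\ep,\indx})$ away from $\eta=0$, not by the bounded factor $|\theta|$.
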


\section{Wigner transforms and measures} 													
\setcounter{equation}{0}
\label{wigComput} 	We now compute the scalar measures associated to the sequences $\Par{\dt u^{appr}_{\ep,\indx}(t,.)}$ and $\Par{c \dpp_x u^{appr}_{\ep,\indx}(t,.)}$.
As $|\beta_k|=1$, the Wigner transform associated to $ \Par{v_{t,\ep}^+(t,.)}$ is a finite sum of terms of the form
\begin{equation*}
 w_\ep\Par{I_\ep(f_{t,\ep}^k ,\Phi_k)(t,.),\  I_\ep(f_{t,\ep}^l , \Phi_l)(t,.)},
\end{equation*}
where $k,l=0,1$, $f_{t,\ep}^k = c|\theta|\Pi_k \trak{\rhou\ox \phiu} \trak{p_{\ep,k}}$ and $\Phi_{k} = \trak{\psi_{k}}$.
As regards the Wigner transforms associated to $\Par{c v_{x,\ep}^+( t,.)}$, since $c$ is uniformly continuous on $\IR^n$, one has
by a classical result (\cite{GeMaMaPo}, p.8)
\begin{equation}
\label{WigSmoo}
 w_\ep\Par{ c v_{x,\ep}^+ (t,.), c v_{x,\ep}^+ (t,.)} \approx c^2 w_\ep\Par{v_{x,\ep}^+ (t,.), v_{x,\ep}^+ (t,.)} \text{ in } \IR^{2n},
\end{equation}
and therefore the involved quantities have the form
\begin{equation*}
 c^2 w_\ep\Par{I_\ep(f_{x,\ep}^k  , \Phi_k)(t,.), I_\ep(f_{x,\ep}^l  , \Phi_l)(t,.)},
\end{equation*}
with $f_{x,\ep}^k = \theta\Pi_k \trak{\rhou\ox \phiu} \trak{p_{\ep,k}}$.

Similarly, we  define for $k=0,-1$ the sequences $g_{t,\ep}^k = c|\theta|\Pi_k \trak{\rhou\ox \phiu}  \trak{q_{\ep,k}}$, which are needed when considering
the Wigner transform associated with $\Par{c v_{t,\ep}^-(-t,.)}$ and the cross Wigner transform between $\Par{c v_{t,\ep}^+(t,.)}$ and $\Par{c v_{t,\ep}^-(-t,.)}$, as well as $g_{x,\ep}^k = \theta \Pi_k \trak{\rhou\ox \phiu} \trak{q_{\ep,k}}$.
Then, forgetting the powers of $\ep$ factors, all the previous Wigner transforms tested on cut-off functions have the form 
\begin{equation}
	\label{intForm1}
	\int_{\IR^{6n}} \trak{T_\ep \kappa_\ep}(z,\theta) \tral{\overline{T_\ep \tau_\ep}}(z',\theta') b_1^{k,l}(z,\theta,z',\theta',x,v) e^{i\Psi_1^{k,l}(z,\theta,z',\theta',x,v)/\ep} dz d\theta dz' d\theta' dx dv,
\end{equation}
with $\kappa_\ep,\tau_\ep = \ep^{-1} u_{\ep,\indx}^I,v_{\ep,\indx}^I$ and $k,l=0,\pm 1$, or after expanding the FBI transforms
\begin{equation}
	\label{intForm2}
	\int_{\IR^{8n}}  \kappa_\ep(w)  \bar \tau_\ep(w') b_2^{k,l}(z,\theta,z',\theta',x,v) e^{i \Psi_2^{k,l}(w,w',z,\theta,z',\theta',x,v)/\ep} dw dw' dz d\theta dz' d\theta' dx dv.
\end{equation}
This type of oscillating integrals is traditionally estimated by the stationary phase theorem. For example, this method was successfully used in \cite{Castella} for the computation of a Wigner measure for smooth data. There the phase was complex and its Hessian matrix restricted to the stationary set was assumed to be non-degenerate in the normal direction to this set. However, in our case, the amplitude is not smooth as no such assumption was made on $\uuepI$ and $\uvepI$, and we cannot estimate immediately the global integral \eqref{intForm2} by the same techniques. One possibility of solving this issue would be to resort to the stationary phase theorem with a complex phase depending on parameters for estimating
$$	\int_{\IR^{6n}}  b_2(z,\theta,z',\theta',x,v) e^{i \Psi_2(w,w',z,\theta,z',\theta',x,v)/\ep} dz d\theta dz' d\theta' dx dv,$$
and then study the whole integral involving $\kappa_\ep(w)  \bar \tau_\ep(w')$.

An alternative method was used in \cite{Robinson}, where an integral of the form \eqref{intForm1} associated to the Wigner transform for the Schr\"odinger equation with a WKB initial condition was simplified by elementary computations into an integral over $\IR^{4n}$. 

Though the method therein faced difficulties in deducing the exact relation between the Wigner measure of the solution and of the initial data, we adapt the result of \cite{Robinson} to our problem in Section \ref{Rob} and complete the analysis to prove the propagation along the flow of the microlocal energy density of $u^{appr}_{\ep,\indx}$ as $\ep \rightarrow 0$ in Section \ref{GBSwigner}. The proof is simple and elementary and the computations are made in an explicit way.
Section \ref{truncation} is devoted to the Wigner measures associated to the derivatives of $u_\ep$ the exact solution of \eqref{MPb:gp}. 

\subsection{Wigner transform for Gaussian integrals} \label{Rob}

The sequences $(f_{t,\ep}^k)$, $(f_{x,\ep}^k)$, $(g_{t,\ep}^l)$ and $(g_{x,\ep}^l)$ are uniformly bounded w.r.t. $\ep$ in $L^2(\IR^{2n})$ and their supports are contained in a fixed compact independent of $\ep$. Slight modifications of the computations of \cite{Robinson} lead to the following more general result:
\begin{lemma}
\label{robin}
Let $(f_\ep)$ and $(g_\ep)$ be sequences uniformly bounded in $L^2(\IR^{2n})$ and having their supports contained in a fixed compact independent of $\ep$. Let $F$ be an open set containing $\supp f_\ep \cup \supp g_\ep$ and $\Phi$, $\Psi$ be phase functions in $\Co^\infty(\IR_x^n \x F,\IC)$   satisfying 
\begin{align*}
\Phi(x,z,\theta)   &= r_\Phi(z,\theta)   +  \theta \cdot (x-z)  + \frac i 2 (x-z) \cdot H_\Phi(z,\theta)   (x-z),   \\
\Psi(x,z',\theta') &= r_\Psi(z',\theta') +  \theta' \cdot (x-z') + \frac i 2 (x-z')\cdot H_\Psi(z',\theta') (x-z'),
\end{align*}
for $x \in \IR^n$ and $(z,\theta), (z',\theta')\in F$, with $r_\Phi, r_\Psi \in \Co^{\infty}(F, \IR)$ and the matrices $ H_\Phi, H_\Psi \in \Co^{\infty}(F, \Ma_n(\IC))$ having positive definite real parts.
Then for $\phi \in \Co_0^\infty(F,\IR)$ 
\begin{equation*}\begin{split}
&<w_\ep\Par{I_\ep(f_\ep,\Phi),I_\ep(g_\ep,\Psi)},\phi>\\ 
=& \int_{\IR^{4n}} \phi(s,\sigma)   f_\ep(s+\sqrt \ep r,\sigma+\sqrt \ep\delta) g_\ep^*(s-\sqrt \ep r,\sigma-\sqrt \ep\delta)\\
&\qquad \, A(\Phi,\Psi)(s,\sigma) e^{i\Theta_\ep(\Phi,\Psi)(s,\sigma,r,\delta)}   dr d\delta ds d\sigma + o(1),
\end{split}\end{equation*}
where 
\begin{equation*}
A(\Phi,\Psi)(s,\sigma) = c_n^2 2^{\frac{5n}{2}}  \pi^{\nd} \Croch{\det\Par{H_\Phi(s,\sigma) + \overline{H_\Psi}(s,\sigma)}}^{-\ud}, 
\end{equation*}
and
\begin{equation*}\begin{split}
\Theta_\ep(\Phi,\Psi)(s,\sigma,r,\delta) =& r_\Phi(s+\sqrt \ep r,\sigma+\sqrt \ep\delta)/\ep -  r_\Psi(s-\sqrt \ep r,\sigma-\sqrt \ep\delta)/\ep\\ 
& -2\sigma \cdot r/\sqrt \ep + i (r,\delta)  \cdot  Q\Par{H_\Phi(s,\sigma) ,\overline{H_\Psi}(s,\sigma)}(r,\delta).
\end{split}\end{equation*}
The matrix $Q\Par{H_\Phi(s,\sigma) ,\overline{H_\Psi}(s,\sigma)}$ and the square root are defined in Lemma \ref{TFGG}.
\end{lemma}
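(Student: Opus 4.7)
My plan is to follow the elementary approach pioneered by Robinson in the Schr\"odinger setting, adapted to the current quadratic phases. Testing the Wigner transform against $\phi$ and substituting the definitions of $I_\ep(f_\ep,\Phi)$ and $I_\ep(g_\ep,\Psi)$ produces a highly oscillatory integral in $7n$ variables with prefactor $(2\pi)^{-n}c_n^2\ep^{-5n/2}$. The strategy is to rescale all relevant spatial and frequency variables at the natural Gaussian-beam scale $\sqrt\ep$ so that the Jacobians exactly cancel this prefactor, and then to evaluate two inner integrations (one a Fourier inversion, one a genuine Gaussian) in closed form, leaving precisely the integral stated in the lemma plus an $o(1)$ remainder.

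Concretely, I first integrate out the Wigner variable $\xi$ by Fourier inversion on $\phi$, which turns $\int \phi(x,\xi)e^{-iv\cdot\xi}d\xi$ into $\hat\phi_\xi((y_1+y_2)/2,(y_1-y_2)/\ep)$ after writing $y_1=x+\ep v/2,\ y_2=x-\ep v/2$. I then apply the chain of substitutions $y_1=z+\sqrt\ep\,\tilde y_1,\ y_2=z'+\sqrt\ep\,\tilde y_2$, together with the midpoint--difference change $z=s+\sqrt\ep r,\ z'=s-\sqrt\ep r,\ \theta=\sigma+\sqrt\ep\delta,\ \theta'=\sigma-\sqrt\ep\delta$, and finally $(\tilde y_1,\tilde y_2)\mapsto(w,\eta)$ defined by $w=(\tilde y_1+\tilde y_2)/2$ and $\sqrt\ep\,\eta=\tilde y_1-\tilde y_2+2r$. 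The combined Jacobian $4^n\ep^{5n/2}$ exactly absorbs the $\ep^{-5n/2}$ prefactor. Using the explicit quadratic form of $\Phi$ and $\Psi$, the exponent reorganises into $i[r_\Phi(s+\sqrt\ep r,\sigma+\sqrt\ep\delta)-r_\Psi(s-\sqrt\ep r,\sigma-\sqrt\ep\delta)]/\ep-2i\sigma r/\sqrt\ep+i\sigma\eta+2i\delta\cdot w-\tfrac12[\tilde y_1\cdot H_\Phi\tilde y_1+\tilde y_2\cdot\bar H_\Psi\tilde y_2]$, with $\tilde y_1=w-r+\sqrt\ep\eta/2$ and $\tilde y_2=w+r-\sqrt\ep\eta/2$.

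The two remaining interior integrations are performed successively. The $\eta$-integral against $\hat\phi_\xi(s+\sqrt\ep w,\eta)e^{i\sigma\eta}$ equals $(2\pi)^n\phi(s+\sqrt\ep w,\sigma)$ up to $O(\sqrt\ep)$, the $\sqrt\ep\eta$ corrections from the Gaussian being absorbed thanks to the Schwartz decay of $\hat\phi_\xi$ in its second variable. The $w$-integral is then a true Gaussian with symmetric matrix $H_\Phi(s,\sigma)+\bar H_\Psi(s,\sigma)$ of positive-definite real part (by the hypothesis on $H_\Phi,H_\Psi$), delivering the prefactor $(2\pi)^{n/2}[\det(H_\Phi+\bar H_\Psi)]^{-1/2}$; completing the square against the linear term $2i\delta+(H_\Phi-\bar H_\Psi)r$ produces the residual quadratic form $i(r,\delta)\cdot Q(H_\Phi,\bar H_\Psi)(r,\delta)$ of Lemma \ref{TFGG}. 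Collecting all algebraic factors gives exactly $A(\Phi,\Psi)(s,\sigma)=c_n^2 2^{5n/2}\pi^{n/2}[\det(H_\Phi+\bar H_\Psi)]^{-1/2}$ and the stated $\Theta_\ep$.

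The main obstacle is controlling the remainder uniformly in $\ep$. Three sources of $O(\sqrt\ep)$ pointwise errors appear: replacing $\phi(s+\sqrt\ep w,\sigma)$ by $\phi(s,\sigma)$ after the $w$-Gaussian integration, replacing $H_\Phi(s+\sqrt\ep r,\sigma+\sqrt\ep\delta)$ and $\bar H_\Psi(s-\sqrt\ep r,\sigma-\sqrt\ep\delta)$ by their values at $(s,\sigma)$, and dropping the $\sqrt\ep\eta$ terms in $\tilde y_1,\tilde y_2$ in the Gaussian. Each individual error has to be integrated against the $L^2$-bounded but otherwise rough sequence $f_\ep(s+\sqrt\ep r,\sigma+\sqrt\ep\delta)\,\overline{g_\ep(s-\sqrt\ep r,\sigma-\sqrt\ep\delta)}$. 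The key is to apply Cauchy--Schwarz in $(s,\sigma)$, using the translation invariance $\|f_\ep(\cdot+\sqrt\ep r,\cdot+\sqrt\ep\delta)\|_{L^2}=\|f_\ep\|_{L^2}$, and then integrate in $(r,\delta)$ using the Gaussian decay at infinity provided by the positive real part of the quadratic form $i(r,\delta)\cdot Q(r,\delta)$ --- this positivity, inherited from the original damping in $(\tilde y_1,\tilde y_2)$, is precisely what Lemma \ref{TFGG} ensures, and it makes every one of these error integrals genuinely $o(1)$.
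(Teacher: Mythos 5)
Your computation is correct, and it reproduces the paper's result by the same Robinson-style elementary strategy (test against $\phi$, rescale at scale $\sqrt\ep$ around the midpoint $(s,\sigma)$, evaluate the Gaussian integrations in closed form, freeze the smooth factors with $O(\sqrt\ep)$ errors, and close with Cauchy--Schwarz against the rough $f_\ep,g_\ep$), but with a genuinely reshuffled order of exact integrations. The paper integrates the Wigner variable $v$ \emph{first}, via the closed-form Fourier transform of a product of two Gaussians (Lemma \ref{TFGG}), which makes $Q(H_\Phi,\overline{H_\Psi})$ appear immediately; only afterwards does it test against $\phi$, rescale $(x,\xi)=(s+\sqrt\ep x',\sigma+\sqrt\ep\xi')$, control $\phi(s+\sqrt\ep x',\sigma+\sqrt\ep\xi')-\phi(s,\sigma)$ through the auxiliary function $b_\ep$ and its Fourier decay, and evaluate the final $(x',\xi')$-Gaussian --- a step which forces it to invoke the symplecticity of $Q$ (namely $\det Q=1$ and $Q^{-1}=-JQJ$) to land on the stated form of $\Theta_\ep$. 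You instead integrate $\xi$ through $\phi$ at the outset, then $\eta$ (Fourier inversion recovering $\phi(s+\sqrt\ep w,\sigma)$) and finally $w$: this produces the determinant factor $\Croch{\det\Par{H_\Phi+\overline{H_\Psi}}}^{-\ud}$ of $A(\Phi,\Psi)$ directly from the $w$-Gaussian, and your completion of the square against $2i\delta+(H_\Phi-\overline{H_\Psi})r$ \emph{re-derives} the quadratic form $i(r,\delta)\cdot Q(r,\delta)$ rather than importing it, so the symplecticity identities and the Fourier statement of Lemma \ref{TFGG} are bypassed altogether (your derivation also furnishes $\Re Q>0$ from the joint damping in $(\tilde y_1,\tilde y_2)$, so citing the lemma for positivity is optional). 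I verified your Jacobian bookkeeping ($4^n\ep^{5n/2}$ against the $\ep^{-5n/2}$ prefactor), the exponent, and the constant, which matches $c_n^2 2^{\frac{5n}{2}}\pi^{\nd}$ exactly. One caution on the last paragraph: attributing the $(r,\delta)$-integrability of \emph{every} error term to $\Re Q>0$ is imprecise. For the main term and the $H$-freezing error (mean value theorem, losing a factor $\sqrt\ep|(r,\delta)|^3$) that Gaussian decay is indeed what you use, but for the $\phi$-replacement and the $\sqrt\ep\eta$-correction errors, estimating in absolute value before the $w$-integration destroys all decay in $\delta$; there the decay must come from the retained oscillation $e^{2i\delta\cdot w}$, i.e., from the rapid Fourier decay of a Schwartz-class integrand in $w$ --- precisely the mechanism behind the paper's bound $\Modul{\Fo_{(x',\xi')}b_\ep(-2\delta,2r)}\lesssim\sqrt\ep\,(1+(r,\delta)^2)^{-n-1}$. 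Since you invoke exactly this Schwartz-decay device for the $\eta$-step, the needed tool is already in your argument; it just has to be applied to those two error terms as well.
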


\begin{proof}
It consists in two steps. Firstly, the Fourier transform of a Gaussian type function is computed explicitly. Then, a Gaussian approximation is used for several smooth functions appearing in the Wigner transform integral. 

For simplicity we denote $u(x,z,\theta)$ by $u$ and $u(x,z',\theta')$ by $u'$ when integrating w.r.t. $z,\theta,z',\theta'$. We also omit the index $\ep$ in the notation of $f_\ep$ and $g_\ep$.
\\
\textbf{Step 1. Fourier transform.}
We note that the Wigner transform at point $(x,\xi) \in \IR^{2n}$ may be written as
\begin{equation*}\begin{split}
&w_\ep\Par{I_\ep(f,\Phi),I_\ep(g,\Psi)}(x,\xi)\\
=& \pi^{-n} c_n^2 \ep^{-\frac{5n}{2}}  \int_{\IR^{5n}} f  {g^*}'e^{i r_\Phi/\ep-i r_\Psi'/\ep +ix \cdot (\theta-\theta')/\ep + i(\theta'\cdot z'-\theta \cdot z)/\ep} \\
&\qquad \qquad \qquad \quad \,\Fo_{v}\big(e^{-(v+x-z)\cdot H_\Phi (v+x-z)/(2\ep)}\\
&\qquad \qquad \qquad \qquad \, \, \x e^{-(v-x+z')\cdot  {\overline{H_\Psi}}'(v-x+z')/(2\ep)}\big)\Par{(2\xi-\theta-\theta')/\ep}  \\
&\qquad \qquad \qquad \quad \, dv dz dz'  d\theta d\theta'.
\end{split}\end{equation*}
The Fourier transform of a Gaussian functions product is given by the following Lemma, whose proof is postponed to the end of this Section:
\begin{lemma}
\label{TFGG}
Let $a,b\in \IR^d$ and $M,N\in \Ma_d(\IC)$ symmetric matrices with positive definite real parts, then
\begin{equation*}\begin{split}
&\Fo_{x}\Par{e^{\GasE{x-a}{M}{2}} e^{\GasE{x-b}{N}{2}}}(\xi)\\
=&(2\pi)^{\frac{d}{2}} \Par{\det (M+N)}^{-\ud}  e^{-i\xi\cdot(b+a)/2 \GasE{b-a,\xi}{Q(M,N)}{4}},
\end{split}\end{equation*}
where $Q(M,N)$ is the symmetric symplectic matrix given by
\begin{equation*}
Q(M,N)=\left(\begin{array}{cc}
 2M(M+N)^{-1}N            &         i(N-M)(M+N)^{-1}\\
i(M+N)^{-1}(N-M)          &         2(M+N)^{-1}     \\                                  
\end{array}
\right),
\end{equation*}
and the square root is defined as explained in Section 3.4 of \cite{HormanderPDO1}.
\\
Moreover, $Q(M,N) A(M,N)=B(M,N)$  with $A(M,N)=\left(\begin{array}{cc} Id  &   Id\\ -i N   &   i M \\ \end{array}\right)$
 and $B(M,N)=\left(\begin{array}{cc} N  & M\\ -i Id  &  i Id  \\ \end{array} \right),$ and $Q(M,N)$ has a positive definite real part
\begin{equation*}
\Re Q(M,N)=2{A}(M,N)^{*-1} \left(\begin{array}{cc}
 \Re N            &         0         \\
0                 &         \Re M     \\                                  
\end{array}
\right)A(M,N)^{-1}.
\end{equation*}
\end{lemma}
Hence
\begin{equation*}\begin{split}
&w_\ep\Par{I_\ep(f,\Phi),I_\ep(g,\Psi)}(x,\xi)\\
=& c_n^2 2^{\nd} \pi^{-\nd} \ep^{-2n}  \int_{\IR^{4n}} f  {g^*}' \Par{\det(H_\Phi+{\overline{H_\Psi}}')}^{-\ud} e^{i r_\Phi/\ep-i r_\Psi'/\ep} \\
&\qquad \qquad \qquad \qquad \,\, e^{i(\theta+\theta'-2\xi) \cdot (z-z')/(2\ep)+i(\theta-\theta')\cdot x /\ep+ i (\theta' \cdot z'-\theta \cdot z)/\ep} \\
&\qquad \qquad \qquad \qquad \,\, e^{\GasE{2x-z-z',2\xi-\theta-\theta'}{Q(H_\Phi,{\overline{H_\Psi}}')}{(4\ep)}} dz dz'  d\theta d\theta'.
\end{split}\end{equation*}
Making the changes of variables 
\begin{equation*}
	(z,z')=(s+\sqrt \epsilon r,s-\sqrt \epsilon r), \, (\theta,\theta')=(\sigma+\sqrt \epsilon \delta,\sigma-\sqrt \epsilon \delta),
\end{equation*}
and writing $f_+$ for $f(s+\sqrt \ep r ,\sigma+\sqrt \ep \delta)$ and $g_-$ for $g(s-\sqrt \ep r ,\sigma-\sqrt \ep \delta)$ leads to
\begin{equation*}\begin{split}
&w_\ep\Par{I_\ep(f,\Phi),I_\ep(g,\Psi)}(x,\xi)\\
=&  c_n^2 2^{\frac{5n}{2}} \pi^{-\nd} \ep^{-n} \int_{\IR^{4n}} f_+ g^*_- \Par{\det({H_\Phi}_+ + \bar{H}_{\Psi}{\,}_-)}^{-\ud}  e^{i{r_\Phi }_+/\ep-i{r_\Psi}_-/\ep + 2i\delta \cdot (x-s)/\sqrt \ep }\\
& \qquad \qquad \qquad \qquad \,\, e^{- 2 i\xi \cdot r/\sqrt \ep \GasE{x-s,\xi-\sigma}{Q({H_\Phi}_+ ,\overline{H_\Psi}_- )}{\ep} }dr d\delta ds d\sigma.
\end{split}\end{equation*}
\textbf{Step 2. Gaussian approximations.}
Taking the duality product of the Wigner transform with a test function $\phi \in \Co_0^\infty(F,\IR)$, and after setting $(x',\xi')=(x-s,$ $\xi-\sigma)/\sqrt \ep$, one has
\begin{equation}
\label{WigTrans2}
\begin{split}
&<w_\ep\Par{I_\ep(f,\Phi),I_\ep(g,\Psi)},\phi> \\
=& c_n^2 2^{\frac{5n}{2}} \pi^{-\nd} \int_{\IR^{6n}}\phi(s+\sqrt \ep x',\sigma+\sqrt \ep \xi')  f_+ g^*_- \Par{\det({H_\Phi}_+ + \overline{H_\Psi}_-)}^{-\ud} e^{i{r_\Phi}_+/\ep-i{r_\Psi}_-/\ep} \\
&\qquad \qquad \qquad \quad e^{- 2 i \sigma \cdot r /\sqrt \ep + 2i(x',\xi')\cdot(\delta,-r)\GaE{x',\xi'}{Q({H_\Phi}_+,\overline{H_\Psi}_-)}}  dx' d\xi'  dr d\delta ds d\sigma. 
\end{split}
\end{equation}
Let $\rhou_f$ and $\rhou_g$ be cut-off functions supported in $F$ s.t. $\rhou_f \equiv 1$ on a fixed compact containing $\supp f$ and $\rhou_g \equiv 1$ on a fixed compact containing $\supp g$, and consider
\begin{equation*}\begin{split}
b_\ep:(x',\xi',s,\sigma,r,\delta)\mapsto\big(&\phi(s+\sqrt \ep x',\sigma+\sqrt \ep \xi')\\
&-\phi(s,\sigma)\big) {\rhou_f}_+ {\rhou_g}_- e^{\GaE{x',\xi'}{Q({H_\Phi}_+,\overline{H_\Psi}_-)}}.	
\end{split}\end{equation*}
The r.h.s. of \eqref{WigTrans2} may be written as
\begin{equation}
\label{WigTrans3}
\begin{split}
&<w_\ep\Par{I_\ep(f,\Phi),I_\ep(g,\Psi)},\phi>\\
=&  c_n^2 2^{\frac{5n}{2}} \pi^{-\nd}  \int_{\IR^{6n}} \phi(s,\sigma) f_+ g^*_-  \Par{\det({H_\Phi}_+ + \overline{H_\Psi}_-)}^{-\ud} e^{i{r_\Phi}_+/\ep-i{r_\Psi}_-/\ep  - 2 i\sigma \cdot r / \sqrt \ep}\\
&\qquad \qquad \qquad \quad  e^{ 2 i(x',\xi') \cdot (\delta,-r) \GaE{x',\xi'}{Q({H_\Phi}_+,\overline{H_\Psi}_-)}}  dx' d\xi'  dr d\delta ds d\sigma \\
&+  c_n^2 2^{\frac{5n}{2}} \pi^{-\nd}  \int_{\IR^{4n}}\Par{\det({H_\Phi}_+ + \overline{H_\Psi}_-)}^{-\ud} f_+ g^*_-  e^{i{r_\Phi}_+/\ep-i{r_\Psi}_-/\ep- 2 i\sigma \cdot r / \sqrt \ep} \\
&\qquad \qquad \qquad \qquad   \Fo_{(x',\xi')}{b_\ep}(-2\delta,2r,s,\sigma,r,\delta) dr d\delta ds d\sigma.
\end{split}
\end{equation}
Leibnitz formula yields for a multiindex $\alp$
\begin{equation*}\begin{split}
	&\dpp^\alp_{x',\xi'} b_\ep(x',\xi',s,\sigma,r,\delta)  \\
	=&{\rhou_f}_+ {\rhou_g}_-  \Par{\phi(s+\sqrt \ep x',\sigma+\sqrt \ep  \xi')-\phi(s,\sigma)}   \dpp^\alp_{x',\xi'} \Par{e^{\GaE{x',\xi'}{Q({H_\Phi}_+,\overline{H_\Psi}_-)}}}\\
  &+{\rhou_f}_+ {\rhou_g}_-		 \somme{\beta+\gamma =\alp,\beta\ne 0}{}  C(\beta,\gamma) \ep^{\frac{|\beta|}{2}}\dpp^\beta_{x',\xi'}\Par{\phi(s+\sqrt \ep x',\sigma+\sqrt \ep \xi')}\\
  &\qquad \qquad \qquad \qquad \quad \,\,\x \dpp^\gamma_{x',\xi'} \Par{e^{\GaE{x',\xi'}{Q({H_\Phi}_+,\overline{H_\Psi}_-)}}} .
\end{split}\end{equation*}
As $(s+\sqrt\ep r,\sigma+\sqrt\ep \delta)$ varies in $\supp \rhou_f$ and $(s-\sqrt\ep r,\sigma-\sqrt\ep \delta) $ varies in $\supp \rhou_g$,
one can find by continuity a constant $C >0$ s.t.
\begin{equation*}
	\Re{Q({H_\Phi}_+,\overline{H_\Psi}_-)} \geq  C Id  \text{ on } \supp ({\rhou_f}_+ {\rhou_g}_-).	
\end{equation*}
Since
\begin{equation}
\label{boundSupp}
(s,\sigma) \text{ and } \sqrt\ep(r,\delta) \text{ are bounded on } \supp ({\rhou_f}_+ {\rhou_g}_-),
\end{equation}
it follows that there exists a constant $C'>0$ s.t.
\begin{equation*}
|\dpp^\alp_{x',\xi'} b_\ep(x',\xi',s,\sigma,r,\delta)| \lesssim \sqrt \ep e^{-C' (x',\xi')^2} \text{ for all } (x',\xi',s,\sigma,r,\delta),
\end{equation*}
which leads to
\begin{equation*}
	|\Fo_{(x',\xi')}{b_\ep}(-2\delta,2r,s,\sigma,r,\delta)| \lesssim \sqrt \ep (1+(r,\delta)^2)^{-n-1} \text{ for all } (s,\sigma,r,\delta). 
\end{equation*}
The second integral in the r.h.s. of \eqref{WigTrans3} is then dominated by 
\begin{equation*}
\sqrt \ep   \int_{\IR^{4n}}|f_+| |g_-| (1+(r,\delta)^2)^{-n-1} dr d\delta ds d\sigma. 
\end{equation*}
We deduce by Cauchy-Schwartz inequality w.r.t. $s,\sigma$ that
\begin{equation*}\begin{split}
\Big|&<w_\ep\Par{I_\ep(f,\Phi),I_\ep(g,\Psi)},\phi> \\
& -  c_n^2 2^{\frac{5n}{2}}  \pi^{\nd} \int_{\IR^{4n}} \phi(s,\sigma) \Par{\det({H_\Phi}_+ + \overline{H_\Psi}_-)}^{-\ud} f_+ g^*_- e^{i{r_\Phi}_+/\ep-i{r_\Psi}_-/\ep -2 i\sigma.r/\sqrt \ep} \\
&\,\, \quad \qquad \qquad \qquad e^{\GaE{\delta,-r}{Q({H_\Phi}_+,\overline{H_\Psi}_-)^{-1}}} dr d\delta  ds d\sigma\Big| \lesssim \sqrt \ep  \nord{f} \nord{g}, 
\end{split}\end{equation*}
where we used $\det Q({H_\Phi}_+,\overline{H_\Psi}_-)= 1$ since $Q({H_\Phi}_+,\overline{H_\Psi}_-)$ is symplectic.

Next, we extend $H_\Phi$ and $H_\Psi$ outside $F$ as $\lambda H_\Phi + (1-\lambda) Id$ and $\lambda H_\Psi + (1-\lambda) Id$ by using a cut-off $\lambda \in \Co_0^\infty(\IR^{2n},[0,1])$ supported in $F$ s.t. $\lambda \equiv 1$ on the compact set $\supp \rhou_f \cup \supp \rhou_g \cup \supp \phi$, the extended matrices having positive definite real parts. The smoothness of these matrices implies by the mean value theorem and \eqref{boundSupp} that 
\begin{align*}
 &\Modul{\Par{\det({H_\Phi}_+ + \overline{H_\Psi}_-)}^{-\ud}-\Croch{\det\Par{H_\Phi(s,\sigma) + \overline{H_\Psi}(s,\sigma)}}^{-\ud}} \\
 \lesssim& \sqrt \ep |(r,\delta)| \text{ on }\supp (\phi f_+ g^*_-). 
\end{align*}
By symplecticity and symmetry of $Q({H_\Phi}_+ , \overline{H_\Psi}_-)$, its inverse is $- J Q({H_\Phi}_+ , \overline{H_\Psi}_-) J$.
Thus the quantity 
$$	\Modul{e^{\GaE{\delta,-r}{Q({H_\Phi}_+ , \overline{H_\Psi}_-)^{-1}}}-e^{\GaE{r,\delta}{Q\Par{H_\Phi(s,\sigma) , \overline{H_\Psi}(s,\sigma)}}}}$$
 is dominated by 
\begin{equation*}\begin{split}
&\Big|(r,\delta) \cdot \big[Q({H_\Phi}_+ , \overline{H_\Psi}_-) -Q\Par{H_\Phi(s,\sigma) , \overline{H_\Psi}(s,\sigma)}\big] (r,\delta)\Big|\\
\x& \underset{u\in[0,1]}{\sup}  \Big|e^{-u(r,\delta) \cdot Q({H_\Phi}_+ , \overline{H_\Psi}_-)  (r,\delta)-(1-u)(r,\delta) \cdot Q\Par{H_\Phi(s,\sigma) , \overline{H_\Psi}(s,\sigma)}  (r,\delta)}\Big|.
\end{split}\end{equation*}
The positivity of $\Re Q({H_\Phi}_+ , \overline{H_\Psi}_-)$ and $\Re Q\Par{H_\Phi(s,\sigma) , \overline{H_\Psi}(s,\sigma)}$ and the mean value theorem for the matrix function $Q\Par{\lambda H_\Phi +(1-\lambda) Id, \lambda \overline{H_\Psi} +(1-\lambda) Id}$ give by \eqref{boundSupp}
\begin{equation*}\begin{split}
	\Big|e^{\GaE{\delta,-r}{Q({H_\Phi}_+ , \overline{H_\Psi}_-)^{-1}}}-&e^{\GaE{r,\delta}{Q\Par{H_\Phi(s,\sigma) , \overline{H_\Psi}(s,\sigma)}}}\Big| \lesssim  \sqrt \ep |(r,\delta)|^3 e^{-C (r,\delta)^2}
\end{split}\end{equation*}
for $(s,\sigma) \in \supp \phi$, $(s+ \sqrt \ep r , \sigma + \sqrt \ep \delta ) \in \supp \rhou_f$ and $(s - \sqrt \ep r , \sigma - \sqrt \ep \delta ) \in \supp \rhou_g$. It follows that 
\begin{equation*}\begin{split}
\Big|&<w_\ep\Par{I_\ep(f,\Phi),I_\ep(g,\Psi)},\phi> \\
& - c_n^2 2^{\frac{5n}{2}}  \pi^{\nd}  \int_{\IR^{4n}}  \phi(s,\sigma) (\det[H_\Phi + \overline{H_\Psi}])^{-\ud}(s,\sigma) f_+ g^*_- e^{i{r_\Phi}_+/\ep-i{r_\Psi}_-/\ep -2i\sigma \cdot r/\sqrt \epsilon}\\
&\,\, \quad \qquad \qquad \qquad e^{ \GaE{r,\delta}{Q(H_\Phi, \overline{H_\Psi})(s,\sigma)}}  dr d\delta ds d\sigma\Big| \lesssim \sqrt \ep    \nord{f} \nord{g}. 
\end{split}\end{equation*}
\end{proof}

\begin{proof}[Proof of Lemma \ref{TFGG}]
    The matrix $M+N$ has a positive definite real part and is thus non-singular. By elementary calculus we have
\begin{equation*}\begin{split}
&(x-a)\cdot{M} (x-a) + (x-b)\cdot {N} (x-b) \\
=&(b-a)\cdot {M(M+N)^{-1}N}(b-a) \\
&+\Par{x-(M+N)^{-1}(Ma+Nb)}\cdot{(M+N)}\Par{x-(M+N)^{-1}(Ma+Nb)}. 
\end{split}\end{equation*}
Using the value of the Fourier transform of a Gaussian function (see Theorem 7.6.1 of \cite{HormanderPDO1}), it follows that
\begin{equation*}\begin{split}
&\Fo_{x}{\Par{e^{\GasE{x-a}{M}{2}} e^{\GasE{x-b}{N}{2}} }}(\xi)\\
=&(2\pi)^{\frac{d}{2}} (\det [M+N])^{-\ud} e^{\GasE{b-a}{M(M+N)^{-1}N}{2}} \\
& e^{-i\xi\cdot(M+N)^{-1}(Ma+Nb) - \xi \cdot (M+N)^{-1} \xi /2}.
\end{split}\end{equation*}
Writing $M=1/2(M+N)+1/2(M-N)$ and $N=1/2(M+N)-1/2(M-N)$, we get the expression with the matrix $Q(M,N)$ and the relation
\begin{equation*}
Q(M,N) A(M,N)=B(M,N).
\end{equation*}
One can easily show that 
$$B(M,N)^T J B(M,N) = \Par{\begin{array}{cc} 0 & i(M+N) \\ -i(M+N) & 0 \end{array}} = A(M,N)^T J A(M,N),$$
from which follows the symplecticity of $Q(M,N)$.
Then write 
\begin{equation*}\begin{split}
& Q(M,N)+\overline{Q}(M,N) \\
=&{A}(M,N)^{*-1}\Par{{A}(M,N)^{*}B(M,N)+{B}(M,N)^{*}A(M,N)}A(M,N)^{-1}
\end{split}\end{equation*}
to obtain the value of $\Re Q(M,N)$.
\end{proof}

From now on, we drop the index $\ep$ in the notation of $v^\pm_{t,\ep}$, $v^\pm_{x,\ep}$, $f_{t,\ep}^k$ etc. for simplicity. We fix $t \in [0,T]$ and apply Lemma \ref{robin} with $F = \Bic$ on the sequences $(f_t^k),(f_t^l)$ (respectively $(f_x^k),(f_x^l)$) and the phase functions $\Phi_k,\Phi_l$ for the Wigner transforms associated to $\Par{v^+_t(t,.)}$ (respectively $\Par{v^+_x(t,.)}$). To evaluate the cross Wigner transforms between $\Par{v^+_t(t,.)}$ and $\Par{v^-_t(-t,.)}$ (respectively $\Par{v^+_x(t,.)}$ and $\Par{v^-_x(-t,.)}$) , we use this Lemma on the sequences $(f_t^k),(g_t^l)$ (respectively $(f_x^k),(g_x^l)$).

\subsection{Wigner measures for superposed Gaussian beams} \label{GBSwigner}

We shall prove that the cross Wigner transforms 

$$w_\ep\Par{v^+_t(t,.),v^-_t(-t,.)}, \ w_\ep\Par{v^+_x(t,.),v^-_x(-t,.)} $$
and
$$w_\ep\Par{I_\ep(f_{t,x}^k,\Phi_k),I_\ep(f_{t,x}^l,\Phi_l)},  \ w_\ep\Par{I_\ep(g_{t,x}^k,\Phi_k),I_\ep(g_{t,x}^l,\Phi_l)}$$
 with $k \ne l$ do not contribute to the microlocal energy density limit $\nrjFun{u^{appr}_{\ep,\indx}(t,.)}$ in $\Cob$. We compute $\Theta_\ep(\Phi_k,\Phi_k)$ and $A(\Phi_k,\Phi_k)$ and analyze the transported FBI transforms at points $(s\pm \sqrt \ep r,\sigma\pm \sqrt \ep \delta)$, which will complete the study of the Wigner measures for superposed Gaussian beams. 

Firstly, we note that $\nordye{(1 - \rhou \ox \phiu)\Pi_k p_{\ep,k}}=O(\ep^\infty)$ for $k=0,\pm 1$. Indeed, $\phiu \equiv 1$ on $\Rcomp$ so one gets from \eqref{CIsmooth2} that $T_\ep u^I_{\ep,\indx},T_\ep v^I_{\ep,\indx}$ have infinitely small contributions in $L^2(\IR^n \x \supp(1-\phiu))$. 

On the other hand,  $\dist\Par{\supp (1-\rhou),\supp u^I_{\ep,\indx} \cup \supp v^I_{\ep,\indx}} > C$. Then, 
Lemma \ref{FBIoutBis} implies that $T_\ep u^I_{\ep,\indx},T_\ep v^I_{\ep,\indx}$ have infinitely small contributions in $L^2(\supp(1-\rhou) \x \IR^n)$. Therefore
\begin{equation*}\begin{split}
&w_\ep\Par{I_\ep(f_t^k,\Phi_k),I_\ep(f_t^l,\Phi_l)} \\
\approx&	   A(\Phi_k,\Phi_l) \int_{\IR^{2n}}  \Par{c |\sigma|\Pi_k \trak{p_{\ep,k}}}_+   \Par{c |\sigma| \Pi_l \tral{\bar p_{\ep,l}}}_- e^{i \Theta_\ep(\Phi_k,\Phi_l)} dr d\delta \text{ in } \Cob,
\end{split}\end{equation*}
and a similar relation holds true for $w_\ep\Par{I_\ep(f_x^k,\Phi_k),I_\ep(f_x^l,\Phi_l)}$.
\\
We start by approaching $\Par{c(s)|\sigma|}_+ \Par{c(s)|\sigma|}_-$ by $c(s)^2|\sigma|^2$ in the previous integral 
\begin{equation}
\label{dtStart}
\begin{split}
&w_\ep\Par{I_\ep(f_t^k,\Phi_k),I_\ep(f_t^l,\Phi_l)}	  \\
\approx &A(\Phi_k,\Phi_l)c(s)^2 |\sigma |^2 \int_{\IR^{2n}}   \Par{\Pi_k \trak{p_{\ep,k}}}_+  \Par{\Pi_l 
\tral{\bar p_{\ep,l}}}_-  e^{i\Theta_\ep(\Phi_k,\Phi_l)} dr d\delta \text{ in } \Cob   , 
\end{split}
\end{equation}
and $\sigma_+ \sigma_-^*$ by $\sigma \sigma^*$ in the integral giving $w_\ep\Par{I_\ep(f_x^k,\Phi_k),I_\ep(f_x^l,\Phi_l)}$
\begin{equation}
\label{dxStart}
\begin{split}
&w_\ep\Par{I_\ep(f_x^k,\Phi_k),I_\ep(f_x^l,\Phi_l)}	 \\
\approx &A(\Phi_k,\Phi_l) \sigma \sigma^* \int_{\IR^{2n}}  \Par{\Pi_k  \trak{p_{\ep,k}}}_+ 
\Par{\Pi_l \tral{\bar p_{\ep,l}}}_- e^{i\Theta_\ep(\Phi_k,\Phi_l)} dr d\delta \text{ in } \Cob.
\end{split}
 \end{equation}
Indeed, these approximations are proved with the help of the following Lemma 
\begin{lemma}
\label{approxSmooth}
Let $(f_\ep),(g_\ep)$ and $\Phi,\Psi$ satisfy the hypotheses of Lemma \ref{robin}. 
If $\alp$ and $\beta$ are in $\Co^1(F,\IC)$ then 
\begin{equation*}\begin{split}
w_\ep\Par{I_\ep(\alp f_\ep,\Phi),I_\ep(\beta g_\ep,\Psi)} \approx \alp \bar \beta w_\ep\Par{I_\ep( f_\ep,\Phi),I_\ep( g_\ep,\Psi)} \text{ in }F.
\end{split}\end{equation*}
\end{lemma}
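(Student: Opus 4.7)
The plan is to apply Lemma \ref{robin} to both sides of the claimed equivalence and then reduce the comparison to a pointwise estimate on the smooth factors $\alp,\beta$. Since $\approx$ in $F$ is equality of distributional limits on $F$, I will test both $w_\ep(I_\ep(\alp f_\ep,\Phi), I_\ep(\beta g_\ep,\Psi))$ and $\alp\bar\beta\, w_\ep(I_\ep(f_\ep,\Phi), I_\ep(g_\ep,\Psi))$ against an arbitrary $\phi \in \Co_0^\infty(F,\IR)$ and show that the difference vanishes as $\tendz{\ep}$.

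First, the sequences $(\alp f_\ep)$ and $(\beta g_\ep)$ inherit from $(f_\ep),(g_\ep)$ the hypotheses of Lemma \ref{robin}: they are uniformly bounded in $L^2(\IR^{2n})$ and their supports remain in a fixed compact of $F$, using the $\Co^1$ regularity of $\alp,\beta$ on $F$. Applying Lemma \ref{robin} to $w_\ep(I_\ep(\alp f_\ep,\Phi), I_\ep(\beta g_\ep,\Psi))$ tested against $\phi$ then yields, up to $o(1)$,
\begin{equation*}
\int_{\IR^{4n}} \phi(s,\sigma)\, \alp_+ \bar\beta_-\, f_{\ep,+}\, g^*_{\ep,-}\, A(\Phi,\Psi)(s,\sigma)\, e^{i\Theta_\ep(\Phi,\Psi)(s,\sigma,r,\delta)}\, dr\, d\delta\, ds\, d\sigma,
\end{equation*}
where $\alp_\pm, \beta_\pm, f_{\ep,\pm}, g_{\ep,\pm}$ denote evaluations at $(s\pm\sqrt\ep r, \sigma\pm\sqrt\ep\delta)$. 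A second application of Lemma \ref{robin} to $w_\ep(I_\ep(f_\ep,\Phi), I_\ep(g_\ep,\Psi))$ tested against $\alp\bar\beta\phi$, which becomes a valid compactly supported test function after multiplying by a cut-off $\lambda \in \Co_0^\infty(F,[0,1])$ equal to one on $\supp\phi \cup \supp f_\ep \cup \supp g_\ep$ and extending $\alp,\beta$ smoothly to $\IR^{2n}$ as $\lambda\alp+(1-\lambda)$, $\lambda\beta+(1-\lambda)$, gives the same expression but with the shifted product $\alp_+\bar\beta_-$ replaced by $\alp(s,\sigma)\bar\beta(s,\sigma)$.

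The heart of the argument is then the elementary mean-value estimate
\begin{equation*}
\Modul{\alp_+ \bar\beta_- - \alp(s,\sigma)\bar\beta(s,\sigma)} \lesssim \sqrt\ep\, |(r,\delta)|,
\end{equation*}
valid uniformly on the relevant bounded region thanks to the $\Co^1$ regularity of $\alp$ and $\beta$ and the compactness of $\supp\phi$ and of the supports of $f_\ep, g_\ep$. Combined with the uniform Gaussian decay $|e^{i\Theta_\ep}| = \exp\bigl(-(r,\delta)\cdot\Re Q(H_\Phi(s,\sigma),\overline{H_\Psi}(s,\sigma))(r,\delta)\bigr)$ provided by Lemma \ref{TFGG}, the difference of the two expressions is bounded by
\begin{equation*}
C\sqrt\ep \int_{\IR^{4n}} |\phi(s,\sigma)|\, |(r,\delta)|\, e^{-C'(r,\delta)^2}\, |f_{\ep,+}|\, |g_{\ep,-}|\, dr\, d\delta\, ds\, d\sigma,
\end{equation*}
which after Cauchy-Schwarz in $(s,\sigma)$ for each fixed $(r,\delta)$ and integration in $(r,\delta)$ is $O(\sqrt\ep\, \nord{f_\ep}\nord{g_\ep}) = o(1)$.

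The only mild obstacle is purely technical: making sure that the shifted arguments of $\alp,\beta$ lie in $F$ so that the mean value theorem applies with a uniform Lipschitz constant. This is handled by the cut-off extension just described (and does not modify the integrand on the support of $\phi\, f_{\ep,+}\, g^*_{\ep,-}$ for $\ep$ small enough), and is essentially the same truncation device already employed in the proof of Lemma \ref{robin}.
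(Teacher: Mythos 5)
Your argument is essentially the paper's own: the paper's one-line proof is precisely your ``heart of the argument,'' namely Taylor's (mean-value) formula applied to $\rhou_f\alp$ and $\rhou_g\bar\beta$ inside the asymptotic integral furnished by Lemma \ref{robin}, with the $O(\sqrt\ep\,|(r,\delta)|)$ remainder absorbed by the Gaussian decay of $e^{i\Theta_\ep}$ and Cauchy--Schwarz, your cut-off $\lambda$ playing the role of the paper's cut-offs $\rhou_f,\rhou_g$. One small caveat, shared with the paper's own terse proof: your extension $\lambda\alp+(1-\lambda)$ is only $\Co^1$, not smooth, so the second invocation of Lemma \ref{robin} with the test function $\alp\bar\beta\phi$ slightly stretches its stated $\Co_0^\infty$ hypothesis, which is immaterial in all the paper's applications since the multipliers there are in fact smooth on $F$.
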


\begin{proof}
The proof relies on the use of Taylor's formula on $\rhou_f \alp$ and $\rhou_g \bar \beta$, where $\rhou_f$ and $\rhou_g$ are the cut-offs used in the proof of Lemma  \ref{robin} (supported in $F$ and equal to $1$ on $\supp f_\ep$ and $\supp g_\ep$ respectively).
\end{proof}

It follows by using \eqref{dtStart} and \eqref{dxStart} that
\begin{equation*}\begin{split}
c^2 \Tr \, w_\ep\Par{I_\ep(f_x^k,\Phi_k),I_\ep(f_x^l,\Phi_l)} &\approx w_\ep\Par{I_\ep(f_t^k,\Phi_k),I_\ep(f_t^l,\Phi_l)} \text{ in } \Cob,
\end{split}\end{equation*}
which leads to
\begin{equation*}
 w_\ep\Par{v^+_t(t,.),v^+_t (t,.)} \approx  c^2 \Tr\, w_\ep\Par{ v^+_x (t,.), v^+_x (t,.)} \text{ in } \Cob.
\end{equation*}
Similarly
\begin{align}
\label{wigt&x}
  w_\ep\Par{v^-_t(-t,.),v^-_t (-t,.)} &\approx  c^2 \Tr\, w_\ep\Par{ v^-_x (-t,.), v^-_x (-t,.)} \text{ in } \Cob, \nonumber \\
 \text{and } w_\ep\Par{v^+_t(t,.),v^-_t (-t,.)} &\approx  c^2 \Tr\, w_\ep\Par{ v^+_x(t,.) ,  v^-_x(-t,.)} \text{ in } \Cob. 
\end{align}
The approximations linking the derivatives of $u^{appr}_{\ep,\indx}$ to $v^\pm_{t,x}$ given in Lemma \ref{dtudxu} and equation \eqref{WigSmoo} lead to
\begin{equation*}\begin{split}
&4 \nrjFun{u^{appr}_{\ep,\indx}(t,.)} \\
\approx & w_\ep[ v_t^+(t,.)]+ c^2\Tr\, w_\ep[  v_x^+(t,.)] +w_\ep[ v_t^-(-t,.)]+ c^2 \Tr\, w_\ep[  v_x^-(-t,.)] \\
&  - w_\ep\Par{ v_t^+(t,.) , v_t^-(-t,.)} + c^2 \Tr\, w_\ep\Par{ v_x^+(t,.) ,  v_x^-(-t,.)}  \\
&  - w_\ep\Par{ v_t^-(-t,.), v_t^+(t,.)} + c^2 \Tr\, w_\ep\Par{ v_x^-(-t,.),  v_x^+(t,.)} \text{ in } \IR^{2n}
\end{split}\end{equation*}
by using the standard estimate (see Proposition 1.1 in \cite{GeMaMaPo})
\begin{equation}
\label{wigDif}
 |<w_\ep (a_\ep,b_\ep),\phi>| \lesssim \nordvois{a_\ep}{\IR^n} \nordvois{b_\ep}{\IR^n}, 
\end{equation}
for sequences $(a_\ep),(b_\ep)$ in $L^2(\IR^n)$ and $\phi \in \Co_0^\infty(\IR^{2n},\IR)$.
The cross terms between $v^+_{t,x}$ and $v^-_{t,x}$ cancel in $\Cob$ by using \eqref{wigt&x}, leading to
\begin{equation}
\label{WigAsymp0}
\nrjFun{u^{appr}_{\ep,\indx}(t,.)} \approx \ud w_\ep[ v_t^+(t,.)] + \ud w_\ep[ v_t^-(-t,.)] \text{ in } \Cob.
\end{equation}
Thus, we are left with the computation of the Wigner measure associated to $(v_t^+)$, computations being similar for $(v_t^-)$.
One has
\begin{equation}
\begin{split}
\label{WigdtVP}
&w_\ep[v_t^+]\\
 \approx& \somme{k,l=0,1}{} c(s)^2	|\sigma|^2 A(\Phi_k,\Phi_l) \int_{\IR^{2n}}  \Par{\Pi_k  \, \trak{p_{\ep,k}}}_+   
 \Par{\Pi_l  \,\tral{\bar p_{\ep,l}}}_- e^{i\Theta_\ep(\Phi_k,\Phi_l)} dr d\delta \text{ in } \Cob.
\end{split} 
\end{equation}
Moreover the inverse of the reflected/incident flow in $\Cob$ is a reflected/incident flow 
\begin{equation*}
\{\varphi_k^{t}\}^{-1} = \varphi_{-k}^{-t}, \, k=0,1.
\end{equation*}
Thus, for $(s,\sigma) \in \Cob$, at most one of the points $x_{-k}^{-t}(s,\sigma)$ and $x_{-l}^{-t}(s,\sigma)$ is in $\Omega$.
Consequently, the contribution of cross terms between different Gaussian beams in \eqref{WigdtVP} vanishes in $\Cob$,
and we need to compute only the limits when $\ep$ goes to zero of the following two distributions: 
\begin{equation}
\label{mukDef}
\mu_{\ep,k}^t = c(s)^2 |\sigma|^2 w_\ep[I_\ep(\Pi_k \trak{p_{\ep,k}},\Phi_k)],   \, k = 0,1.
\end{equation}
Remember that
$\trak{p_{\ep,k}} = \trak{a_k} \ep^{-1}\trak{T_\ep u_{\ep,\indx}^I} + \trak{a_k'}\trak{T_\ep v_{\ep,\indx}^I}$,
so $\mu_{\ep,k}^t$ may be written as 
\begin{equation}
\label{distvP}
\begin{split}
&\mu_{\ep,k}^t 
\\=& c^2(s) |\sigma|^2  w_\ep\Croch{I_\ep(\Pi_k \trak{a_{k}}\ep^{-1} \trak{T_\ep u_{\ep,\indx}^I},\Phi_k)} \\
&+ w_\ep\Croch{I_\ep(\Pi_k \trak{a_{k}} \trak{T_\ep v_{\ep,\indx}^I},\Phi_k)}  \\
&- i c(s) |\sigma| w_\ep\Par{I_\ep(\Pi_k \trak{a_{k}} \ep^{-1} \trak{T_\ep u_{\ep,\indx}^I},\Phi_k),I_\ep(\Pi_k \trak{a_{k}} \trak{T_\ep v_{\ep,\indx}^I},\Phi_k)} \\
&+ i c(s) |\sigma|  w_\ep\Par{I_\ep(\Pi_k \trak{a_{k}} \trak{T_\ep v_{\ep,\indx}^I},\Phi_k),I_\ep(\Pi_k \trak{a_{k}} \ep^{-1} \trak{T_\ep u_{\ep,\indx}^I},\Phi_k)}.
\end{split}
\end{equation}
In the remainder of this Section we prove the following Proposition, compute $\mu_{\ep,k}^t$ and the limit when $\ep\rightarrow 0$ of the microlocal energy density of $u^{appr}_{\ep,\indx}$.
\begin{proposition}
\label{wignerGBS}
Let $(\kappa_\ep),(\tau_\ep)$ be uniformly bounded sequences in $L^2(\IR^n)$. Then
$$w_\ep\Par{I_\ep(\Pi_k \trak{a_{k}} \trak{T_\ep \kappa_\ep},\Phi_k),I_\ep(\Pi_k \trak{a_{k}} \trak{T_\ep \tau_\ep},\Phi_k)} \approx \Pi_k^2  w_\ep(\kappa_\ep,\tau_\ep)\ron \Par{\varphi_k^t}^{-1} \text{ in } \Cob.$$
Above $\varphi_k^t$ is extended outside $\Bic$ as the identity.
\end{proposition}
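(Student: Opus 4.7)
My approach is to reduce both sides of the claimed equivalence to explicit Gaussian integrals via Lemmas \ref{approxSmooth} and \ref{robin}, then verify their asymptotic match through a symplectic change of variables combined with an algebraic identity between the resulting Gaussian kernels.

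First, I would apply Lemma \ref{approxSmooth} with $\alp = \beta = \Pi_k \trak{a_k}$---smooth on $\Bic$---to pull these weights out of the Wigner transform as a multiplicative factor $\Pi_k^2 |\trak{a_k}|^2$. This reduces the claim to showing $|\trak{a_k}|^2\, w_\ep(I_\ep(\trak{T_\ep \kappa_\ep}, \Phi_k), I_\ep(\trak{T_\ep \tau_\ep}, \Phi_k)) \approx w_\ep(\kappa_\ep, \tau_\ep) \circ (\varphi_k^t)^{-1}$ in $\Cob$. The left-hand side is directly amenable to Lemma \ref{robin} with phase $\Phi_k = \trak{\psi_k}$; by Lemma \ref{FirstGB}, $r_{\Phi_k} \equiv 0$ since $\psi_k$ vanishes on the ray $x_k^t$, while $H_{\Phi_k}(z,\theta) = \trak{\Lambda_k}(t,z,\theta)$ has positive definite real part by \eqref{PhaseTwoBis}. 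For the right-hand side, I exploit the isometry $T_\ep^* T_\ep = \mathrm{Id}$ to write $\kappa_\ep = I_\ep(T_\ep \kappa_\ep, \Phi_{FBI})$ with $\Phi_{FBI}(x,y,\eta) = \eta \cdot (x-y) + \tfrac{i}{2}(x-y)^2$, so that $r_{\Phi_{FBI}} = 0$, $H_{\Phi_{FBI}} = \mathrm{Id}$, and Lemma \ref{robin} generates the isotropic Gaussian kernel $e^{-|(r',\delta')|^2}$ since $Q(\mathrm{Id}, \mathrm{Id}) = I_{2n}$. A minor technical truncation of $T_\ep \kappa_\ep, T_\ep \tau_\ep$ may be needed to fit the compact-support hypothesis of Lemma \ref{robin}, but this is routine.

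Second, in the left-hand integral supplied by Lemma \ref{robin}, I would perform the symplectic change of variables $(s,\sigma) = \varphi_k^t(y,\eta)$ (Jacobian one) together with the linear substitution $(r,\delta) = D\varphi_k^t(y,\eta)(r',\delta')$ (also Jacobian one, by symplecticity of the flow). To leading order, the arguments $(\varphi_k^t)^{-1}(s \pm \sqrt\ep r, \sigma \pm \sqrt\ep\delta)$ of the transported FBI transforms become $y \pm \sqrt\ep r',\, \eta \pm \sqrt\ep \delta'$; the nonlinear $O(\ep)$ remainders of $(\varphi_k^t)^{-1}$ contribute an $o(1)$ error in $L^2$ because $T_\ep$ only resolves phase space at scale $\sqrt\ep$ and Gaussian convolution absorbs smaller shifts.

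The main obstacle is the remaining pointwise algebraic identity: the transported amplitude $|\trak{a_k}|^2 A(\Phi_k, \Phi_k)(\varphi_k^t(y,\eta))$ together with the Gaussian $e^{i(D\varphi_k^t(r',\delta')) \cdot Q(\trak{\Lambda_k}, \overline{\trak{\Lambda_k}}) (D\varphi_k^t(r',\delta'))}$ must coincide with the isotropic $A(\Phi_{FBI}, \Phi_{FBI}) e^{-|(r',\delta')|^2}$, and the oscillatory phase $-2\xi_k^t \cdot [D_y x_k^t r' + D_\eta x_k^t \delta']/\sqrt\ep$ must reduce to $-2\eta \cdot r' / \sqrt\ep$ after absorbing residual shifts into the FBI arguments. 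I would establish these identities from Lemma \ref{FirstGB}'s formulas $a_k = [\det U_k^t]^{-1/2}$ and $\Lambda_k = -i V_k^t (U_k^t)^{-1}$, the symplectic relations \eqref{propk} on $(U_k^t, V_k^t)$, and the explicit matrix identities for $Q(M, \overline{M})$ supplied by Lemma \ref{TFGG}; the phase match is essentially an expression of the conservation of the canonical $1$-form under the Hamiltonian flow.
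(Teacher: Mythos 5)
Your reduction and the algebraic core coincide with the paper's own proof: Lemma \ref{approxSmooth} extracts $\Pi_k^2\,|\trak{a_k}|^2$, Lemma \ref{robin} applies with $r_{\Phi_k}=0$ and $H_{\Phi_k}=\trak{\Lambda_k}$, and the identities $Q\Par{\trak{\Lambda_k},\trak{\bar\Lambda_k}}=(F_{-k}^{-t})^T F_{-k}^{-t}$, $|a_k|^2 A(\Phi_k,\Phi_k)=c_n^2 2^{2n}\pi^{\nd}$, together with the relations $\Par{D_y x_{-k}^{-t}}^T\xi_{-k}^{-t}=\sigma$ and $\Par{D_\eta x_{-k}^{-t}}^T\xi_{-k}^{-t}=0$ from \cite{LaSi} (your ``conservation of the canonical one-form''), are exactly the computations carried out in the paper, including the same change of variables $(r',\delta')=F_{-k}^{-t}(s,\sigma)(r,\delta)$. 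Your one genuinely different move --- representing the right-hand side through $\kappa_\ep=T_\ep^*T_\ep\kappa_\ep=I_\ep(T_\ep\kappa_\ep,\Phi_{\mathrm{FBI}})$ and a second application of Lemma \ref{robin} --- is not available as stated: $T_\ep\kappa_\ep$ is not compactly supported for a general bounded sequence in $L^2$, and after truncating, the discarded cross Wigner terms are controlled only by \eqref{wigDif}, which gives boundedness, not smallness; making this ``routine'' step rigorous requires a separate microlocal concentration argument. The paper sidesteps the issue entirely by never invoking Lemma \ref{robin} on the right-hand side: it expands the FBI transforms explicitly inside the transported integral, integrates out the Gaussian variables, and lands directly on the pairing $<w_\ep(\kappa_\ep,\tau_\ep),\Pi_k^2\ron\varphi_k^t\;\phi\ron\varphi_k^t>$.

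The genuine gap is your treatment of the Taylor remainders of $\varphi_{-k}^{-t}$ at $(s\pm\sqrt\ep r,\sigma\pm\sqrt\ep\delta)$. You assert that the $O(\ep)$ remainders $\ep r_\ep^{x\pm},\ep r_\ep^{\xi\pm}$ contribute an $o(1)$ error ``in $L^2$'' because $T_\ep$ only resolves phase space at scale $\sqrt\ep$. This is false for the position variable: since $T_\ep\kappa_\ep(y+h,\eta)=T_\ep\Par{\kappa_\ep(\cdot+h)}(y,\eta)$, a shift $h=\ep R$ amounts to translating $\kappa_\ep$ itself, and translation is not $o(1)$ in $L^2$ uniformly over bounded sequences --- take $\kappa_\ep(x)=e^{ix\cdot\xi_0/\ep}\chi(x)$, for which the shift produces the $O(1)$ factor $e^{i\xi_0\cdot R}$. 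Equivalently, the $\ep$-shift in $y$ enters the FBI phase as $e^{i\eta\cdot r_\ep^{x\pm}}$, an $O(1)$ oscillation that cannot be absorbed. What rescues the Proposition is a cancellation your proposal never identifies: only the \emph{difference} $r_\ep^{x+}-r_\ep^{x-}$ is $O(\sqrt\ep)$ (the two quadratic remainders agree to leading order), so in the product of the two FBI factors the net phase term $\eta\cdot(R_\ep^{x+}-R_\ep^{x-})$ tends to zero --- precisely the term the paper tracks inside $\gamma_\ep$. Moreover, since the remainders grow like $|(r',\delta')|^2$, even the corrected pointwise convergence $d_\ep e^{i\gamma_\ep}\rightarrow d_0 e^{i\gamma_0}$ must be paired with a genuine domination argument: the paper proves bounds of the form $\Modul{\dpp_\eta^\alp\Par{d_\ep e^{i\gamma_\ep}}}\leq C' e^{-C''({y'}^2+{r'}^2+{\delta'}^2)}$, including a case analysis on whether $|y'|\geq C_0|(r',\delta')|$, and integrates by parts in $\eta$ to gain $(1+u^2)^{-n}$ decay against the non-smooth factor $\kappa_\ep(x+\frac\ep2 u)\bar\tau_\ep(x-\frac\ep2 u)$ before applying dominated convergence. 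None of this is implied by ``Gaussian convolution absorbs smaller shifts''; it is the entire second half of the paper's proof and the mathematical heart of the statement.
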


\begin{proof}
We simplify the integral 
$$A(\Phi_k,\Phi_k) \int_{\IR^{2n}}  \Par{\Pi_k  \, \trak{T_\ep\kappa_\ep}}_+   
 \Par{\Pi_k  \,\trak{\overline{ T_\ep\tau_\ep}}}_- e^{i\Theta_\ep(\Phi_k,\Phi_k)} dr d\delta $$
obtained when applying Lemma \ref{robin} in $\Cob$ by firstly computing the phase $\Theta_\ep$ and the amplitude $A$ and then analyzing the transported FBI transforms.
\textbf{Computation of $\Theta_\ep(\Phi_k,\Phi_k)$ and $A(\Phi_k,\Phi_k)$.} 
We consider $(s,\sigma) \in \Cob$ and start from
\begin{equation*}
\Theta_\ep(\Phi_k,\Phi_k)(s,\sigma,r,\delta) = - 2 \sigma \cdot r/\sqrt \ep +i (r,\delta)  \cdot  Q\Par{\trak{\Lambda_k}(t,s,\sigma),\trak{\bar\Lambda_k}(t,s,\sigma)} (r,\delta).	
\end{equation*}
The particular form of $\Lambda_k(t) = -i V_k^t (U_k^t)^{-1}$, see Lemma \ref{FirstGB}, induces a similar form for the matrix $Q\Par{\trak{\Lambda_k}(t),\trak{\bar\Lambda_k}(t)}$
\begin{equation*}
Q\Par{\trak{\Lambda_k}(t),\trak{\bar\Lambda_k}(t)}Y_k^t = -iZ_k^t,
\end{equation*}
where $Y_k^t$ and $Z_k^t$ are the $2n \x 2n$ matrices
\begin{equation*}
Y_k^t=\left(\begin{array}{cc}
\trak{\bar{U}_k^t}  &  \trak{U_k^t}\\
\trak{\bar{V}_k^t}  &  \trak{V_k^t}
\end{array}\right) \text{ and }
Z_k^t=\left(\begin{array}{cc}
-\trak{\bar V_k^t}  &  \trak{V_k^t}\\
\trak{\bar U_k^t}  &  -\trak{U_k^t}
\end{array}\right).
\end{equation*} 
Replacing $U_k^t$ and $V_k^t$ by their definitions links $Y_k^t$ and $Z_k^t$ to the Jacobian matrix $F_{k}^{t}$
\begin{equation*}
{Y_k^t}=-i \trak{F_k^t} J\left(\begin{array}{cc}
-Id & Id  \\
iId & iId \\
\end{array}
\right) 
\text{ and }
{Z_k^t}=J \trak{F_k^t}\left(\begin{array}{cc}
-Id & Id  \\
iId & iId \\
\end{array}
\right), 
\end{equation*}
so that
\begin{equation*}
Q\Par{\trak{\Lambda_k}(t),\trak{\bar \Lambda_k}(t)}= - J \trak{F_k^t} J\Par{\trak{F_k^t}}^{-1}.
\end{equation*}
As $\varphi_k^t\ \ron\ \varphi_{-k}^{-t} = Id$, one has
\begin{equation*}
\trak{F_k^t} F_{-k}^{-t}=Id.
\end{equation*}
Combining this relation with the symplecticity of $F_k^t$, one gets the following relation for the matrix $Q\Par{\trak{\Lambda_k}(t),\trak{\bar\Lambda_k}(t)}$
\begin{equation*}
Q\Par{\trak{\Lambda_k}(t),\trak{\bar\Lambda_k}(t)} = (F_{-k}^{-t})^T F_{-k}^{-t}.
\end{equation*}
Therefore
\begin{equation*}
\Theta_\ep(\Phi_k,\Phi_k)(s,\sigma,r,\delta) = - 2\sigma  \cdot  r/\sqrt \ep +i \Par{F_{-k}^{-t}(s,\sigma) (r,\delta)}^2.	
\end{equation*}
Moving to the amplitude $ A(\Phi_k,\Phi_k) = c_n^2 2^{\frac{5n}{2}}  \pi^{\nd} \Par{\det(\trak{\Lambda_k}+\trak{\bar\Lambda_k})}^{-\ud}$, one gets
by using \eqref{propos} and \eqref{propk}
$$ \Lambda_k(t) +\bar \Lambda_k(t) = 2 \Par{(\bar U_k^t)^{-1}}^T (U_k^t)^{-1}.$$
Hence
\begin{equation*}
A(\Phi_k,\Phi_k)= c_n^2 2^{2n} \pi^{\nd}  \left|\det \trak{U_k^t} \right|.
\end{equation*}
Plugging the form of the incident and reflected amplitudes in Lemma \ref{FirstGB} and using the $\Co^1$ smoothness of $a_k^{(')}$ on $\Bic$ yields by Lemmas \ref{robin} and \ref{approxSmooth} 
\begin{equation*}\begin{split}
&w_\ep\Par{I_\ep(\Pi_k \trak{a_{k}} \trak{T_\ep \kappa_\ep},\Phi_k),I_\ep(\Pi_k \trak{a_{k}} \trak{T_\ep \tau_\ep},\Phi_k)} \\
\approx& c_n^2 2^{2n} \pi^{\nd} \int_{\IR^{2n}} \Par{\Pi_k \trak{T_\ep \kappa_\ep}}_+ \, \Par{\Pi_k \trak{\overline{T_\ep \tau_\ep}}}_- e^{-i 2\sigma  \cdot  r/\sqrt \ep - \Par{F_{-k}^{-t} (r,\delta)}^2} dr d\delta \\
=:&J_{\ep,k}^t(\kappa_\ep,\tau_\ep).
\end{split}\end{equation*}
\textbf{Analysis of the transported FBI transforms.}
It remains to analyze the most difficult terms in the amplitude, which involve transported FBI transforms 
\begin{equation*}\begin{split} 
\Par{\Pi_k  \trak{T_\ep \kappa_\ep}}_+ &= \Par{\Pi_k \, T_\ep \kappa_\ep \ron \varphi^{-t}_{-k}}( s+ \sqrt \ep r , \sigma + \sqrt \ep \delta),\\
\text{and }\Par{\Pi_k  \trak{\overline{T_\ep \tau_\ep}}}_- &= \Par{\Pi_k \, \overline{T_\ep \tau_\ep} \ron \varphi^{-t}_{-k}}( s- \sqrt \ep r , \sigma - \sqrt \ep \delta).
\end{split}\end{equation*}
Let $\phi$ be a test function in $\Co_0^\infty(\Cob,\IR)$ and $\vartheta_{-k}^{-t}$ a map of $\Co_0^\infty(\IR^{2n},\IR^{2n})$ that coincides with $\varphi_{-k}^{-t}$ on $K_{z,\theta}^k(t) \cup \supp \phi$ (see Theorem 1.4.1 of \cite{HormanderPDO1}). We use Taylor's formula for this map to get for  $(s\pm\sqrt \ep r,\sigma \pm \sqrt \ep \delta) \in K_{z,\theta}^k(t)$ and $(s,\sigma)\in \supp \phi$ 
\begin{align*}
  \Par{x_{-k}^{-t}}_\pm &= x_{-k}^{-t}\pm \sqrt \ep \, D_y x_{-k}^{-t}\, r \pm \sqrt \ep \,D_\eta x_{-k}^{-t}\, \delta + \ep r_\ep^{x \pm},\\
\Par{\xi_{-k}^{-t}}_\pm &= \xi_{-k}^{-t}\pm \sqrt \ep \,D_y \xi_{-k}^{-t}\, r \pm \sqrt \ep \,D_\eta \xi_{-k}^{-t}\, \delta + \ep r_\ep^{\xi \pm},
\end{align*}
with
\begin{align*}
r_\ep^{x \pm}  (s,\sigma,r , \delta ) &= \somme{|\alp|=2}{} \frac{2}{\alp!} (r, \delta)^\alp
\int_{0}^1 (1-u)\dpp^\alp_{y} \vartheta_{-k}^{-t} \Par{\Par{s, \sigma} \pm u\sqrt\ep \Par{ r, \delta}} du, \\
r_\ep^{\xi \pm}(s,\sigma,r , \delta ) &= \somme{|\alp|=2}{} \frac{2}{\alp!} (r, \delta)^\alp
\int_{0}^1 (1-u)\dpp^\alp_{\eta} \vartheta_{-k}^{-t} \Par{\Par{s, \sigma} \pm u\sqrt\ep \Par{ r, \delta}} du. 
\end{align*}
The change of variables $(r',\delta') = F_{-k}^{-t}(s,\sigma)(r,\delta)$ in $J_{\ep,k}^t(\kappa_\ep,\tau_\ep)(s,\sigma)$ is thus appropriate.
Notice that for $(s,\sigma) \in \Cob$ one has the following relations \cite{LaSi}
\begin{equation*}
	D_y x_{-k}^{-u}(s,\sigma)^T \xi_{-k}^{-u}(s,\sigma) - \sigma =0 \text{ and } D_\eta x_{-k}^{-u}(s,\sigma)^T \xi_{-k}^{-u}(s,\sigma) = 0 \text{ for } u \in \IR.
\end{equation*}
In fact, one can show that the derivatives of the previous equations w.r.t. $u$ are zero. Besides, the equalities clearly hold true at $u=0$ for $k=0$, and at $u=T_k(s,\sigma)$ for $k=\pm 1$, as a consequence of \eqref{Uref}.
Hence, it follows that
\begin{equation*}
	\sigma \cdot r = \xi_{-k}^{-t}(s,\sigma) \cdot \Par{ D_y x_{-k}^{-t}(s,\sigma) r + D_\eta x_{-k}^{-t}(s,\sigma) \delta } = \xi_{-k}^{-t}(s,\sigma) \cdot r',
\end{equation*}
which leads in $\Cob$ to 
\begin{equation*}\begin{split}
&J_{\ep,k}^t(\kappa_\ep,\tau_\ep) \\
= & c_n^2 2^{2n} \pi^{\nd} \int_{\IR^{2n}} (\Pi_k)_+ T_\ep \kappa_\ep(x_{-k}^{-t}+ \sqrt  \ep r' + \ep  {r_\ep^{x +}}', \xi_{-k}^{-t}+\sqrt  \ep  \delta' + \ep  {r_\ep^{\xi +}}') \\
&\qquad \qquad \qquad (\Pi_k)_- \overline{T_\ep \tau_\ep} (x_{-k}^{-t}- \sqrt  \ep r' +  \ep {r_\ep^{x -}}', \xi_{-k}^{-t}-\sqrt  \ep  \delta' +  \ep {r_\ep^{\xi -}}') \\
&\qquad \qquad \qquad e^{- 2i\xi_{-k}^{-t} \cdot r' /\sqrt \ep -{r'}^2 -{\delta'}^2 } dr' d\delta', 
\end{split}\end{equation*}
where
$$ ({r_\ep^x}',{r_\ep^\xi}')(s,\sigma,r',\delta') = (r_\ep^x,r_\ep^\xi) (s,\sigma,r,\delta).$$ 
In order to use the change of variables $(s,\sigma) = \varphi_k^t(y,\eta)$ for $<J_{\ep,k}^t(\kappa_\ep,\tau_\ep),\phi>$,  we extend $\varphi_k^t$ outside $\Bic$ by the identity and still denote it $\varphi_k^t$, making $\varphi_k^t$ a one to one map from $\IR^{2n}$ to $\varphi_k^t(\IR^{2n})$.
 Then $\Pi_k\ron \varphi_k^t$ and $\phi\ron \varphi_k^t$ belong to $  \Co_0^\infty(\IR^{2n},\IR)$ and are supported in $\Bic$.
Expanding the FBI transforms gives
\begin{equation*}\begin{split}
&<J_{\ep,k}^t(\kappa_\ep,\tau_\ep),\phi> \\
=& c_n^4 2^{2n} \pi^{\nd} \ep^{-\tnd}\int_{\IR^{6n}}  \phi\ron \varphi_k^t(y,\eta) \kappa_\ep(z) \bar \tau_\ep(z') \\
&\qquad \qquad \qquad \qquad (\Pi_k \ron \varphi_k^t )(y+ \sqrt  \ep r' + \ep R_\ep^{x +},\eta+ \sqrt  \ep \delta' + \ep R_\ep^{\xi +})\\
&\qquad \qquad \qquad \qquad (\Pi_k \ron \varphi_k^t )(y- \sqrt  \ep r' + \ep R_\ep^{x -},\eta - \sqrt  \ep \delta' + \ep R_\ep^{\xi -}) \\ 
&\qquad \qquad \qquad \qquad  e^{  i \eta \cdot ( 2  \sqrt \ep r'+ \ep R_\ep^{x+} -\ep R_\ep^{x-} -z +z') /\ep +i \delta' \cdot (2y  -z -z'+ \ep R_\ep^{x+} +\ep R_\ep^{x-}) /\sqrt \ep } \\
&\qquad \qquad \qquad \qquad e^{i R_\ep^{\xi+} \cdot(y + \sqrt \ep r' +\ep R_\ep^{x+} -z ) - i R_\ep^{\xi-} \cdot(y - \sqrt \ep r'+ \ep R_\ep^{x-} -z' )}\\
&\qquad \qquad \qquad \qquad e^{-(y + \sqrt \ep r' +\ep R_\ep^{x+} -z )^2/(2 \ep) -(y - \sqrt \ep r'+ \ep R_\ep^{x-} -z' )^2/(2 \ep)} \\
&\qquad \qquad \qquad \qquad e^{- 2i\eta \cdot r' /\sqrt \ep -{r'}^2 -{\delta'}^2 } dr' d\delta' dz dz' dy d\eta,
\end{split}\end{equation*}
where
$$ (R_\ep^x, R_\ep^\xi)(y,\eta,r',\delta') = ({r_\ep^x}',{r_\ep^\xi}') (s,\sigma,r',\delta').$$ 
We perform the changes of variables
$$(x,u)=(\frac{z+z'}{2},\frac{z-z'}{\ep}) \text{ and } y'= (y-\frac{z+z'}{2})/\sqrt\ep $$
to obtain
\begin{equation*}\begin{split}
&<J_{\ep,k}^t(\kappa_\ep,\tau_\ep),\phi> \\
=& c_n^4 2^{2n} \pi^{\nd}  \int_{\IR^{6n}} \kappa_\ep(x+ \frac \ep 2 u) \bar \tau_\ep(x - \frac \ep 2 u) d_\ep e^{i\gamma_\ep - i \eta \cdot u} dr' d\delta' dx du dy' d\eta,
\end{split}\end{equation*}
where
\begin{equation*}\begin{split}
	&d_\ep(x,y',\eta,r',\delta') \\
	=&\phi\ron \varphi_k^t(x + \sqrt \ep y',\eta)  (\Pi_k\ron \varphi_k^t)(x + \sqrt \ep y'+ \sqrt  \ep r' + \ep {R_\ep^{x +}}', \eta+ \sqrt  \ep \delta' + \ep {R_\ep^{\xi +}}' ) \\
&	(\Pi_k\ron \varphi_k^t ) (x + \sqrt \ep y'- \sqrt  \ep r' + \ep {R_\ep^{x -}}', \eta - \sqrt  \ep \delta' + \ep {R_\ep^{\xi -}}'),
\end{split}\end{equation*}
\begin{equation*}\begin{split}
&\gamma_\ep(x,y',\eta,r',\delta',u) \\
=&   \eta \cdot ( {R_\ep^{x+}}' - {R_\ep^{x-}}' ) +  \delta' \cdot (2 y' + \sqrt \ep {R_\ep^{x+}}' +\sqrt \ep {R_\ep^{x-}}' )  \\
&+  \sqrt \ep {R_\ep^{\xi+}}' \cdot( y' + r' +\sqrt \ep {R_\ep^{x+}}' - \sqrt \ep \frac u 2) \\
&-  \sqrt \ep {R_\ep^{\xi-}}' \cdot(y' - r'+ \sqrt \ep {R_\ep^{x-}}' + \sqrt \ep \frac u 2)+i {r'}^2 +i{\delta'}^2\\
&+i ( y' +r'  +\sqrt \ep {R_\ep^{x+}}'  -\sqrt \ep u/2)^2 /2 +i (y'- r'  + \sqrt \ep {R_\ep^{x-}}' + \sqrt \ep u/2 )^2/2 , 
\end{split}\end{equation*}
and
$$ ({R_\ep^{x}}',{R_\ep^{\xi}}')(x,y',\eta,r',\delta') = (R_\ep^{x},R_\ep^{\xi})(x+\sqrt \ep y',\eta,r',\delta'). $$
Notice that $d_\ep(x,y',\eta,r',\delta')$ converges when $\ep \rightarrow 0$ to
$$d_0(x, \eta) = \phi\ron \varphi_k^t(x,\eta)  (\Pi_k\ron \varphi_k^t)^2(x,\eta) .$$
On the other hand, since $\ep r_x^\pm $ are the remainder terms in the Taylor expansions of 
\\
$x_{-k}^{-t}(s\pm \sqrt \ep r,\sigma \pm \sqrt \ep \delta)$ at order $2$, $r_\ep^{x+} -r_\ep^{x-}$ is of order $\sqrt \ep$ and so is ${R_\ep^{x+}}' - {R_\ep^{x-}}'$, leading to
$$\gamma_\ep(x,y',\eta,r',\delta',u) \underset{\tendz{\ep}}{\rightarrow}  \gamma_0(y',r',\delta') = 2 \delta' \cdot   y' +i {y'}^2 + 2 i {r'}^2+i{\delta'}^2.$$
One has
\begin{equation}
\label{beforeTCD}
\begin{split}
\Big|&<J_{\ep,k}^t(\kappa_\ep,\tau_\ep),\phi> \\
&- c_n^4 2^{2n} \pi^{\nd} \int_{\IR^{6n}} \kappa_\ep(x+ \frac \ep 2 u) \bar \tau_\ep(x - \frac \ep 2 u)  d_0 e^{i\gamma_0} e^{-i \eta \cdot u}  dr' d\delta'  du dy' dx d\eta \Big|   \\
\lesssim &\int_{\IR^{4n}} \Croch{\int_{\IR^n}|\kappa_\ep|(x+\frac{\ep}{2} u) |\tau_\ep|(x-\frac{\ep}{2} u)  dx }  \\
& \,\,\quad \quad  \underset{x}{sup}\Modul{\Fo_{\eta}{\Par{d_\ep  e^{i\gamma_\ep}- d_0 e^{i\gamma_0}}}(x,y',u,r',\delta',u) } dr' d\delta' du dy'. 
\end{split}
\end{equation}
Cauchy-Schwartz inequality w.r.t. $dx$ insures that the bracket integral is less than $\nord{\kappa_\ep} \nord{\tau_\ep}$.
Let us examine the term
$$ \int_{\IR^{4n}} \underset{x}{sup}\big|\Fo_{\eta }{\Par{d_\ep  e^{i\gamma_\ep}- d_0  e^{i\gamma_0}}}(x,y',u,r',\delta',u)\big| dr' d\delta' du dy' .
$$
For fixed $y',r',\delta'$ the functions $d_\ep$ and $d_0$ are compactly supported w.r.t. $(x,\eta)$ so 
\begin{equation*}\begin{split}
		         & \underset{x}{\sup}\Modul{\Fo_{\eta}{\Par{d_\ep  e^{i\gamma_\ep}- d_0  e^{i\gamma_0}}}(x,y',u,r',\delta',u)}\\
		\lesssim & \underset{(x,\eta)}{\sup} \Modul{\Par{d_\ep  e^{i\gamma_\ep}- d_0  e^{i\gamma_0}}(x,y',\eta,r',\delta',u)}. 
\end{split}\end{equation*}
Note that $\Modul{d_\ep  e^{i\gamma_\ep}- d_0  e^{i\gamma_0}}$ is dominated by $\Modul{d_\ep - d_0} + \Modul{d_0} \Modul{ e^{i\gamma_\ep-i\gamma_0}- 1}$.
The convergence of $d_\ep $ when $\ep\rightarrow 0$ to its limit $d_0$ is uniform w.r.t. $(x,\eta)$ and so is the convergence of $\gamma_\ep$ to  $\gamma_0 $ on the support of $d_0$. Thus $d_\ep  e^{i\gamma_\ep}$ converges to $d_0  e^{i\gamma_0}$ uniformly w.r.t. $(x,\eta)$. It follows that
\begin{equation*}
		\underset{x}{\sup}\Modul{\Fo_{\eta}{\Par{d_\ep  e^{i\gamma_\ep}- d_0  e^{i\gamma_0}}}(x,y',u,r',\delta',u)} \underset{\tendz{\ep}}{\rightarrow}  0 \text{ for every } y',u,r',\delta'.
\end{equation*}
On the other hand, successive integrations by parts give
\begin{equation*}
\int_{\IR^n} d_\ep  e^{i\gamma_\ep} e^{- i \eta \cdot u} d\eta = (1+u^2)^{-n} \int_{\IR^n} L\Par{ d_\ep  e^{i\gamma_\ep}}e^{- i \eta \cdot u} d\eta,
\end{equation*}
with $L$ a differential operator w.r.t. $\eta$, of order $2n$. Thus, 
\begin{equation}
\label{boundEps}
\begin{split}
&	\underset{x}{\sup}\Modul{\Fo_\eta{\Par{d_\ep  e^{i\gamma_\ep}}}(x,y',u,r',\delta',u)} \\
\lesssim & (1+u^2)^{-n} 		\underset{(x,\eta)}{\sup} \underset{|\alp| \leq 2n}{\max}\Modul{\dpp_\eta^\alp \Par{d_\ep e^{i\gamma_\ep}}(x,y',\eta,r',\delta',u)},
\end{split}
\end{equation}
for every  $y',r',\delta',u$. The quantities $(x+\sqrt \ep y',\eta)$ and $\sqrt \ep (r',\delta')$ are bounded on the support of $d_\ep$, so ${R_\ep^{x\pm}}'$,  ${R_\ep^{\xi\pm}}'$ and their derivatives w.r.t. $\eta$ are dominated by $(r',\delta')^2$. Hence for a given multiindex $\alp$, there exists $C>0$ s.t. 
\begin{align*}
	\Modul{\dpp_{\eta}^\alp d_\ep} \leq& C,\\
	\Modul{\dpp_{\eta}^\alp \gamma_\ep} \leq& C |(r',\delta')| \big(|(r',\delta')|	+ |y' +r'  + \sqrt \ep {R_\ep^{x+}}'  -\sqrt \ep u/2|\\
	&\qquad \qquad\, +|y' -r' + \sqrt \ep {R_\ep^{x-}}' -  \sqrt \ep u/2|\big)  \text{ if } |\alp| \geq 1,
\end{align*}
for all $(x,y',\eta,r',\delta') \in \supp d_\ep$ and $u \in \IR^n$. 
Thus, there exists $C$, $C'>0$ s.t. 
\begin{equation*}\begin{split}
	\Modul{\dpp_{\eta}^\alp \Par{d_\ep  e^{i\gamma_\ep}} } &\leq C  e^{-C'( y' +r'  +\sqrt \ep {R_\ep^{x+}}'  -\sqrt \ep u/2)^2 -C' (y'- r'  + \sqrt \ep {R_\ep^{x-}}' + \sqrt \ep u/2 )^2  -C' {r'}^2 -C' {\delta'}^2 }\\
&	\leq C  e^{-C'( 2y' +\sqrt \ep {R_\ep^{x+}}'+\sqrt \ep {R_\ep^{x-}}')^2  -C' {r'}^2 -C' {\delta'}^2 },
\end{split}\end{equation*}
for all  $(x,y',\eta,r',\delta') \in \supp d_\ep$  and $ u \in \IR^n$.
On the support of $d_\ep$, $\sqrt \ep {R_\ep^{x\pm}}'$ are dominated by $|(r',\delta')|$, which implies for some $C_0>0$ that
\begin{equation*}
	( 2y' +\sqrt \ep {R_\ep^{x+}}'+\sqrt \ep {R_\ep^{x-}}')^2 \geq 4 {y'}^2 - C_0 |(r',\delta')| |y'|.
\end{equation*}
Hence, if $|y'| \geq C_0 |(r',\delta')|$, $e^{-C'( 2y' +\sqrt \ep {R_\ep^{x+}}'+\sqrt \ep {R_\ep^{x-}}')^2} \leq e^{- C''{y'}^2}$. Otherwise,\\
 $e^{-C' {r'}^2-C' {\delta'}^2} \leq e^{-C'' {y'}^2-C'' {r'}^2-C'' {\delta'}^2}$.
In all cases, there exists $C',C''>0$ s.t.
\begin{equation*}
	\Modul{\dpp_{\eta}^\alp \Par{d_\ep  e^{i\gamma_\ep}} } \leq C'  e^{-C''{y'}^2 -C'' {r'}^2 -C'' {\delta'}^2 },
\end{equation*}
for every $x,y',\eta,r',\delta',u $ and $\ep \in ]0,\ep_0]$ with some $\ep_0>0$.
Using this in \eqref{boundEps} leads to
\begin{equation*}\begin{split}
	&	\underset{x}{\sup}\Modul{\Fo_\eta {\Par{d_\ep  e^{i\gamma_\ep}}}(x,y',u,r',\delta',u)} \lesssim (1+u^2)^{-n}  e^{ -C {y'}^2 -C {r'}^2 -C {\delta'}^2 }, 
\end{split}\end{equation*}
and repeating the same arguments for $\underset{x}{\sup}\Modul{\Fo_{\eta }\Par{d_0  e^{i\gamma_0}}}$ gives 
\begin{equation*}\begin{split}
	&	\underset{x}{\sup}\Modul{\Fo_{\eta }{\Par{d_\ep  e^{i\gamma_\ep}- d_0  e^{i\gamma_0}}}(x,y',u,r',\delta',u)} \lesssim (1+u^2)^{-n}  e^{ -C {y'}^2 -C {r'}^2 -C {\delta'}^2 }, 
\end{split}\end{equation*}
for every $y',u,r',\delta' $ and $\ep \in ]0,\ep_0]$. By the dominated convergence theorem, one obtains
\begin{equation*}
	\int_{\IR^{4n}} \underset{x}{\sup}|\Fo_{\eta}{\Par{d_\ep  e^{i\gamma_\ep} - d_0  e^{i\gamma_0}}}(x,y',u,r',\delta',u) | dy' du dr' d\delta' \underset{\tendz{\ep}}{\rightarrow}  0.
\end{equation*}
From the inequality \eqref{beforeTCD} concerning the distribution $J_{\ep,k}^t(\kappa_\ep,\tau_\ep)$, one finally has by plugging the expressions of $d_0$ and $\gamma_0$
\begin{equation*}\begin{split}
<J_{\ep,k}^t(\kappa_\ep,\tau_\ep),\phi> = c_n^4 2^{2n} \pi^{\nd}  \int_{\IR^{6n}} &  \kappa_\ep(x+ \frac \ep 2 u) \bar \tau_\ep(x - \frac \ep 2 u) \\
& e^{2 i \delta' \cdot   y' - {y'}^2 - 2{r'}^2-{\delta'}^2} e^{-i \eta \cdot u}  dr' d\delta' dx du dy' d\eta + o(1). 
\end{split}\end{equation*}
Integration w.r.t. $r',\delta',y',\eta$ yields
\begin{equation*}\begin{split}
&<J_{\ep,k}^t(\kappa_\ep,\tau_\ep),\phi> \\
=& (2\pi)^{-n} \int_{\IR^{2n}}   \Fo_{\eta }{\Par{\Pi_k^2 \ron \varphi_{k}^{t} \phi \ron \varphi_{k}^{t}}}(x,u)  \kappa_\ep(x+\frac{\ep}{2} u) \bar\tau_\ep(x-\frac{\ep}{2} u) dx du + o(1). 
\end{split}\end{equation*}
The integral in the r.h.s. is exactly the Wigner transform of $(\kappa_\ep,\tau_\ep)$ tested on $\Pi_k^2 \ron \varphi_{k}^{t}\, \phi \ron \varphi_{k}^{t}$.
\end{proof}

We are now able to compute the measure $\mu_{\ep,k}^t$ given in \eqref{distvP} by using the previous Proposition and the Lemma \ref{modDWig}
\begin{equation*}
\mu_{\ep,k}^t \approx  \Pi_k^2 \Par{ w_\ep\Croch{v_{\ep,\indx}^I - ic |D| u_{\ep,\indx}^I}}\ron \Par{  \varphi^{t}_{k}  }^{-1} \text{ in } \Cob.
\end{equation*}
Recalling the relation between the Wigner measure and the FBI transform (see Proposition 1.4 of \cite{GeLe})
\begin{equation}
\label{FBIWig}
	\int_{\IR^{2n}} |T_\ep a_\ep|^2 \theta dy d\eta \underset{\ep \rightarrow 0}{\rightarrow} <w[a_\ep], \theta>,  
\end{equation}
for  $\theta \in \Co_0^\infty(\IR^{2n},\IR)$  and  $(a_\ep)$ a uniformly bounded sequence in $L^2(\IR^{n})$, it follows that 
$w_\ep\Croch{v_{\ep,\indx}^I - ic |D| u_{\ep,\indx}^I}  \approx 0  \text{ in } (K_y \x K_\eta)^c$ or equivalently
$$w_\ep\Croch{v_{\ep,\indx}^I - ic |D| u_{\ep,\indx}^I} \ron \Par{  \varphi^{t}_{k}  }^{-1}  \approx 0  \text{ in } (K_{z,\theta}^k(t))^c.$$
Since $\Pi_k  \equiv 1 $ on $K_{z,\theta}^k(t)$, one deduces
\begin{equation*}
\mu_{\ep,k}^t \approx   w_\ep\Croch{v_{\ep,\indx}^I - ic |D| u_{\ep,\indx}^I}\ron \Par{  \varphi^{t}_{k}  }^{-1} \text{ in } \Cob.
\end{equation*}
By summing over $k=0,1$  and letting $\ep \rightarrow 0$, we get
\begin{equation*}
w[ v_t^+ (t,.)] =   \somme{k=0,1}{} w\Croch{v_{\ep,\indx}^I - ic |D| u_{\ep,\indx}^I}\ron \Par{\varphi_k^{t}}^{-1} \text{ in } \Cob.
\end{equation*}
For $u \in[-T,T]$ and $(y,\eta) \in K_y\x \Par{\IR^n\prive{0}}$, the incident and reflected flows are related to the broken bicharacteristic flow associated to $-i \dt -c|D|$ as follows: 
\begin{equation*}
\label{bbf}
	\varphi_b^u (y,\eta) = \left\{\begin{array}{cc} 
	\varphi_{-1}^u (y,\eta) &\text{if } u  < T_{-1}(y,\eta),\\ 
	\varphi_{0}^u (y,\eta)  &\text{if } T_{-1}(y,\eta) < u  < T_{1}(y,\eta), \\
	\varphi_{1}^u (y,\eta)  &\text{if } u  > T_{1}(y,\eta).
	\end{array} \right.
\end{equation*}
We extend $\varphi_b^u $ at times of reflections arbitrary. We define $\varphi_b^u $ in $(\Omega \backslash K_y)\x(\IR^n\prive{0})$ by successively reflecting the rays at the boundary.As only one incident/reflected ray can be in the interior of the domain at a fixed time $t\in [-T,T]$
$$\phi \ron \varphi_b^t = \somme{k=0,1}{} \phi \ron \varphi_k^t  \text{ in }K_y \x \IR^n\prive{0}.$$
It follows that 
\begin{equation*}
\label{FinalWigP}
w[v_t^+(t,.)] =   w\Croch{v_{\ep,\indx}^I - ic |D| u_{\ep,\indx}^I}\ron \Par{\varphi^{t}_b}^{-1} \text{ in } \Cob.
\end{equation*}

The computations for $v_t^-$ are similar. One has just to replace the index $k = 1$ by $k=-1$ and $\trak{p_{\ep,k}}$ by $\trak{q_{\ep,k}}$
in \eqref{mukDef} and to repeat the same techniques.
If we denote $\Upsilon^{\pm}_{\ep,\indx} = v^{I}_{\ep,\indx} \pm ic|{D}| u^{I}_{\ep,\indx}$, then one gets 
\begin{equation*}
w[ v_t^-(-t,.)]  =  w\Croch{\Upsilon^{+}_{\ep,\indx}}\ron \Par{\varphi^{-t}_b}^{-1} \text{ in } \Cob.
\end{equation*}
Using these results in \eqref{WigAsymp0} as $\ep \rightarrow 0$ leads to
\begin{equation}
\label{WigApp}\begin{split}
\nrjFun{ u_{\ep,\indx}^{appr}(t,.)}= \ud  w\Croch{\Upsilon^{+}_{\ep,\indx}}\ron \Par{\varphi^{-t}_b}^{-1} + \ud w\Croch{\Upsilon^{-}_{\ep,\indx}}\ron \Par{\varphi^{t}_b}^{-1} \text{ in } \Cob. 
\end{split}\end{equation}

\subsection{Proof of the main Theorem} \label{truncation}

A consequence of the estimate \eqref{wigDif} is
\begin{equation}
\label{wigDifP}
|<w (a_\ep,b_\ep),\theta>| \lesssim \liSu \nordvois{a_\ep}{\Omega} \liSu\nordvois{b_\ep}{\Omega},
\end{equation}
for $(a_\ep),\,(b_\ep)$ uniformly bounded sequences in $L^2(\IR^n)$ and $\theta \in \Co_0^\infty(\Cob,\IR)$. 
Applying this estimate to the difference between the derivatives of the exact and approximate solutions of the IBVP \eqref{MPb:gp1}-\eqref{MPb:gp2} with initial conditions \eqref{CIcut}, one deduces the measures associated to $\Par{\udtuepInd}$ and $\Par{\udxuepInd}$ and gets by \eqref{WigApp} 
\begin{equation*}
\nrjFun{u_{\ep,\indx}(t,.)} = \ud  w\Croch{\Upsilon^{+}_{\ep,\indx}}\ron \Par{\varphi^{-t}_b}^{-1} + \ud \Croch{\Upsilon^{-}_{\ep,\indx}}\ron \Par{\varphi^{t}_b}^{-1} \text{ in } \Cob.
\end{equation*}

\begin{remark}
Gaussian beam summation of first order beams allows to compute the microlocal energy density of the solution of the IBVP \eqref{MPb:gp} as $\ep \rightarrow 0$, under the hypotheses \eqref{CIboundBis},\eqref{CIcompBis} and \eqref{CIsmoothBis} on initial conditions. Summation of higher order beams may imply asymptotic formulas for the Wigner transforms and thus for the energy density. Higher order terms in the expansion of the Wigner transform were studied for instance in \cite{FiMa} and \cite{Pulvirenti} for WKB initial data.
\end{remark} 

Let us now study the microlocal energy density for the problem \eqref{MPb:gp} when $\ep \rightarrow 0$, by making the data $(u_{\ep,\indx}^I,v_{\ep,\indx}^I)$ approach $(\uuepI,\uvepI)$. The contribution of the sets $\{\eta \in \IR^n, |\eta|\geq r_\infty/4\}$ and $\{\eta \in \IR^n, |\eta| \leq 4 r_0 \}$ where $\gamma_{\indx} \not\equiv 1$ (remember the definition of $\gamma_{\indx}$ in \eqref{cutet}) to  $T_\ep\uuepI,T_\ep\uvepI$ is controlled asymptotically by the assumptions \eqref{oscil} and \eqref{wigZero}.\\
Set ${\Upsilon^{\pm}_\ep = \uvepI \pm i c|{D}| \uuepI}$ and denote $\phi^t=\phi o \varphi^{t}_b$. Then $\phi^t \in \Co_0^\infty(\IR^{2n},\IR)$ and one has
\begin{equation}
\label{wigDiff}
\begin{split}
&\Modul{<\nrjFun{u_\ep(t,.)},\phi>-\ud <w\Croch{\Upsilon_\ep^{+}},\phi^{-t}>-\ud <w\Croch{\Upsilon_\ep^{-}},\phi^{t}>}   \\
\leq &\ud\Modul{<w\Croch{\Upsilon_{\ep,\indx}^{+}}-w\Croch{\Upsilon_\ep^{+}},\phi^{-t}>}+
\ud\Modul{<w\Croch{\Upsilon_{\ep,\indx}^{-}}-w\Croch{\Upsilon_\ep^{-}},\phi^{t}>}\\
&+ \Modul{<w\Croch{\udtuep(t,.)}-w\Croch{\udtuepInd(t,.)},\phi>}\\
&+ \sum_{j=1}^n \Modul{<w\Croch{c \udxjuep(t,.)}-w\Croch{c \udxjuepInd(t,.)},\phi>}  \\
 &+\Modul{<\nrjFun{u_{\ep,\indx}(t,.)},\phi>-\ud <w\Croch{\Upsilon_{\ep,\indx}^{+}},\phi^{-t}>-\ud <w\Croch{\Upsilon_{\ep,\indx}^{-}},\phi^{t}>}.
\end{split}
\end{equation}
We use \eqref{wigDif} to get
\begin{equation*}\begin{split}
&|<w\Croch{\Upsilon_{\ep,\indx}^{+}}-w\Croch{\Upsilon_\ep^{+}},\phi^{-t}>| \\
\lesssim& \liSu\nordvois{\Upsilon_{\ep,\indx}^{+}-\Upsilon_\ep^{+}}{\IR^n} \liSu\Par{\nordvois{\Upsilon_{\ep,\indx}^{+}}{\IR^n}+\nordvois{\Upsilon_\ep^{+}}{\IR^n}}  \\
\lesssim& \liSu \nordom{v_\ep^I- v_{\ep,\indx}^{I}} + \liSu\norhom{u_\ep^I- u_{\ep,\indx}^{I}} .
\end{split}\end{equation*}
Similarly, by \eqref{wigDifP}
\begin{equation*}\begin{split}
&\Big|<w\Croch{\udtuep(t,.)}-w\Croch{\udtuepInd(t,.)},\phi>\Big|\\
\lesssim & \liSu \nordvois{\dt u_\ep(t,.)-\dt u_{\ep,\indx}(t,.)}{\Omega}\\
&\Par{\liSu \nordvois{\dt u_\ep(t,.)}{\Omega}+ \liSu \nordvois{\dt u_{\ep,\indx}(t,.)}{\Omega}},
\end{split}\end{equation*}
and for $j=1,\dots, n$
\begin{equation*}\begin{split}
&\Big|<w\Croch{\udxjuep(t,.)}-w\Croch{\udxjuepInd(t,.)},\phi>\Big|\\
\lesssim &\liSu \nordvois{\dpp_{x_j} u_\ep(t,.)-\dpp_{x_j} u_{\ep,\indx}(t,.)}{\Omega}\\
&\Par{\liSu \nordvois{\dpp_{x_j} u_\ep(t,.)}{\Omega}+ \liSu \nordvois{\dpp_{x_j} u_{\ep,\indx}(t,.)}{\Omega}}.
\end{split}\end{equation*}
The solution of the IBVP for the wave equation is given by a continuous unitary evolution group 
on the space $ {H}^1(\Omega,dx)\x L^2(\Omega,dx)$.
Hence
\begin{equation*}\begin{split}
&\nordom{\dt u_\ep(t,.) - \dt u_{\ep,\indx}(t,.)}\lesssim \nordom{v_\ep^I- v_{\ep,\indx}^{I}}+ \norhom{u_\ep^I- u_{\ep,\indx}^{I}},\\
&\nordom{\dpp_{x_j} u_\ep (t,.)-\dpp_{x_j} u_{\ep,\indx}(t,.)}\lesssim \nordom{v_\ep^I-  v_{\ep,\indx}^{I}}+ \norhom{u_\ep^I- u_{\ep,\indx}^{I}},
\end{split}\end{equation*}
for $j=1,\dots,n$.
Finally, by using \eqref{WigApp}, the estimate \eqref{wigDiff} is simplified into
\begin{equation}
\label{majTransport}\begin{split}
&|<\nrjFun{u_\ep(t,.)},\phi>-\ud < w\Croch{\Upsilon_\ep^{+}},\phi^{-t}>-\ud < w\Croch{\Upsilon_\ep^{-}},\phi^{t}>|  \\
\lesssim & \underset{\tendz\ep}{\limsup}\nordom{v_\ep^I-v_{\ep,\indx}^{I}} + \underset{\tendz\ep}{\limsup}\norhom{u_\ep^I-u_{\ep,\indx}^{I}}.
\end{split}
\end{equation}
We therefore need to estimate the difference between initial data \eqref{MPb:gp3} and \eqref{CIcut}. We start by the initial speed.
By the exponential decrease of $T_\ep^* \gamma_{\indx} T_\ep \uvepI$ on the support of $1-\rho$ (see \eqref{estim1}), one has
$$ \nordom{v^I_\ep - v_{\ep,\indx}^{I} } \lesssim \ep^\infty + \nordom{v^I_\ep - T_\ep^* \gamma_{\indx} T_\ep \uvepI}.$$
Because $T_\ep^*$ is bounded on $L^2(\IR^{2n}) \rightarrow L^2(\IR^{n})$ and $T_\ep^*T_\ep = Id$
\begin{equation*}
\nordvois{\uvepI - T_\ep^*   \gamma T_\ep  \uvepI}{\IR^n}\leq \underset{\displaystyle{\text{\ding{172}}}}{ \nordvois{   (1-\chi_{r_\infty/2})T_\ep \uvepI}{\IR^{2n}}}
+\underset{\displaystyle{\text{\ding{173}}}}{\nordvois{ \chi_{r_\infty/2} \chi_{4 r_0} T_\ep \uvepI}{\IR^{2n}}}.
\end{equation*}
Firstly, Lemma \ref{FBI2} yields
$$ (\text{\ding{172}})^2 = \nordvois{c_n (2 \pi)^{-\nd} \ep^{-\nq} \Par{1-\chi_{r_\infty/2}(\eta)} \int_{\IR^{2n}} \Fo\uvepI(\xi)e^{i \xi \cdot y-(\eta-\ep\xi)^2/(2\ep)} d\xi}{\IR^{2n}_{y,\eta}}^2.$$
It follows by Parseval equality that 
\begin{equation*}\begin{split}
(\text{\ding{172}})^2 = c_n^2 \ep^{-\nd}\int_{|\ep\xi| \leq r_\infty/8} \big(1&-\chi_{r_\infty/2}(\eta)\big)^2 |\Fo \uvepI(\xi)|^2 e^{-(\eta-\ep\xi)^2/\ep}d\xi d\eta \\
+c_n^2 \ep^{-\nd}\int_{|\ep\xi| \geq r_\infty/8}& \big(1-\chi_{r_\infty/2}(\eta)\big)^2 |\Fo\uvepI(\xi)|^2 e^{-(\eta-\ep\xi)^2/\ep}d\xi d\eta.
\end{split}\end{equation*}
The first integral in the r.h.s. is exponentially decreasing, which leads to
\begin{equation*}
\underset{\tendz\ep}{\limsup}\text{ \ding{172}} \lesssim \underset{\tendz\ep}{\limsup}\Par{\int_{|\ep \xi| \geq r_\infty/8}|\Fo \uvepI(\xi)|^2 d\xi}^{\ud}.
\end{equation*} 
Secondly, as $\dist\Par{\supp v_\ep^I, \supp (1-\rho)} >0$, one gets $\nordvois{ (1-\rho ) T_\ep \uvepI }{\IR^{2n}} \leq e^{-C/\ep}$ by Lemma \ref{FBIoutBis} and thus
$$ \underset{\tendz\ep}{\limsup}(\text{ \ding{173}})^2 = \underset{\tendz\ep}{\limsup} \nordvois{\rho(y)\chi_{r_\infty/2}(\eta) \chi_{4 r_0}(\eta)T_\ep \uvepI}{\IR^{2n}}^2.$$
It results from the relation \eqref{FBIWig} applied with $a_\ep = \uvepI$ that
$$(\text{\ding{173}})^2 \underset{\tendz{\ep}}{\rightarrow} <w\Croch{\uvepI},\rho^2 \ox\chi_{r_\infty/2}^2 \chi_{4 r_0}^2>.$$
Because $w\Croch{\uvepI}$ is a regular measure, assumption \eqref{wigZero} yields
\begin{equation*}
\forall \alp>0,\exists l_0(\alp)>0 \text{ s.t. } w\Croch{\uvepI}(\{|\xi|\leq  l_0(\alp)\})\leq \alp.
\end{equation*}
One deduces, for $4 r_0 \leq l_0(\alp)$, that
\begin{equation*}
\underset{\tendz\ep}{\limsup}\text{ \ding{173}}\lesssim \sqrt{ \alp},
\end{equation*}
which leads to 
\begin{equation*}
\underset{\tendz\ep}{\limsup}\nordom{v^I_\ep - v_{\ep,\indx}^{I}}\lesssim \underset{\tendz\ep}{\limsup} \Par{\int_{|\ep \xi| \geq r_\infty/8}|\Fo \uvepI(\xi)|^2 d\xi}^{\ud} + \sqrt{\alp}.
\end{equation*}

For the analysis of $u_\ep^I - u_{\ep,\indx}^I$ in $H^1(\Omega)$, we begin by estimating the spatial derivatives of the difference. 
It follows by using the relation \eqref{derivFBI} when differentiating the inverse FBI transform that
\begin{equation*}
\dpp_{x_j} u_\ep^I-\dpp_{x_j}u^{I}_{\ep,\indx} = \dpp_{x_j} u_\ep^I - (\dpp_{x_j}\rho) T_\ep^*\gamma_{\indx} T_\ep \uuepI - 
\rho T_\ep^* \gamma_{\indx} \dpp_{y_j} T_\ep \uuepI.
\end{equation*}
The term involving the derivative of $\rho$ is exponentially decreasing by Lemma \ref{cutx}.
Since the FBI transform of a derivative is the derivative of the FBI transform by \eqref{derivFBI}, one has to estimate $\nordom{\dpp_{x_j} u_\ep^I - \rho T_\ep^* \gamma_{\indx}  T_\ep  \udxjuepI}$.
Employing the same previous techniques yields for $j=1,\dots,n$
\begin{equation*}
\underset{\tendz\ep}{\limsup}\nordom{\dpp_{x_j} u^I_\ep - \dpp_{x_j} u_{\ep,\indx}^{I}}\lesssim \underset{\tendz\ep}{\limsup} \Par{\int_{|\ep \xi| \geq r_\infty/8}|\Fo \Par{\udxjuepI(\xi)}|^2 d\xi}^{\ud} + \sqrt{\alp},
\end{equation*}
if $4 r_0 \leq l_j(\alp)$ and $w\Croch{\udxjuepI}(\{|\xi|\leq  l_j(\alp)\})\leq \alp$. 
Set $r_0 = \frac 1 4 \underset{0 \leq j \leq n}{\min}l_j(\alp)$, then the Poincar\'e inequality yields the same bound for $\underset{\tendz\ep}{\limsup}\nordom{u_\ep^I-u^{I}_{\ep,\indx}}$.
\\
Coming back to \eqref{majTransport} we deduce that 
\begin{equation}
\label{majTransport2}\begin{split}
&\Big|<\nrjFun{u_\ep(t,.)},\phi>-\ud < w\Croch{\Upsilon_\ep^{+}},\phi^{-t}>-\ud < w\Croch{\Upsilon_\ep^{-}},\phi^{t}>\Big|  \\
\lesssim&  \sqrt{\alp} +\Par{\underset{\ep\rightarrow 0}{\limsup} \int_{\ep|\xi|\geq r_\infty/8}  |\Fo\Par{\uvepI}(\xi)|^2 d\xi}^{\ud} \\ &+\somme{j=1}{n}\Par{\underset{\ep\rightarrow 0}{\limsup} \int_{\ep|\xi|\geq r_\infty/8}  |\Fo\Par{\udxjuepI}(\xi)|^2 d\xi}^{\ud}.
\end{split}\end{equation}
The assumption \eqref{oscil} of $\ep-$oscillation means by definition that
\begin{align}
	\label{oscilDefv}
   \underset{\tendz\ep}{\limsup} \int_{\ep|\xi|\geq R}  |\Fo\Par{\uvepI}(\xi)|^2 d\xi &\underset{R\rightarrow +\infty}{\rightarrow} 0, \\
	\label{oscilDefu}
\underset{\tendz\ep}{\limsup} \int_{\ep|\xi|\geq R}  |\Fo\Par{\udxjuepI}(\xi)|^2 d\xi &\underset{R\rightarrow +\infty}{\rightarrow} 0 \text{ for } j=1,\dots,n.
\end{align}
Since the l.h.s. of the estimate \eqref{majTransport2} does not depend on $\alp$ nor $r_\infty$, one deduces by taking the limits $\tendz{\alp}$ and  $r_\infty \rightarrow \infty$ that
\begin{equation*}
 \nrjFun{u_\ep(t,.)} = \ud  w\Croch{\Upsilon_\ep^{+}} \ron \Par{\varphi^{-t}_b}^{-1} + \ud w\Croch{\Upsilon_\ep^{-}}\ron \Par{\varphi^{t}_b}^{-1} \text{ in } \Cob.
\end{equation*}

\section{Appendix}
\setcounter{equation}{0}
\subsection{Reflected first order and higher order beams} 				\label{AppA} 				
\subsubsection{Higher order beams}

Higher order beams, possibly with more than one amplitude, can be constructed to satisfy better interior and boundary estimates. In this case, the eikonal equation \eqref{eikonalTwo} must be satisfied up to order $R \geq 2$ on the rays. If $ r \geq 3$, the equations 
\begin{equation}
\label{eikonr}
\dpp_x^\alp \Par{p(x,\dt \psi, \dpp_x \psi)}(t,x^t)=0, \, |\alp|= r,
\end{equation}
give systems of linear ODEs of order $1$ on $\Par{\dpp_x^{\alp}\psi(t,x^t)}_{|\alp|=r}$ with second members involving lower order spatial derivatives of the phase. In fact, the key observation is the equality
\begin{equation*}	
\begin{split}
&\dpp_\tau p(\varphi^t) \dt\dpp_x^\alp\psi(t,x^t) + 	\dpp_\xi p(\varphi^t) \cdot \dpp_x\dpp_x^\alp\psi(t,x^t)\\
=&2 c(x^t) |\xi^t| \dt\dpp_x^\alp\psi(t,x^t) + 2 c^2(x^t) \xi^t \cdot \dpp_x\dpp_x^\alp\psi(t,x^t) \\
=&2 c(x^t) |\xi^t| \frac{d}{dt}\Par{\dpp_x^\alp\psi(t,x^t)},
\end{split}
\end{equation*}
used for $|\alp|=r$ to eliminate the $r+1$-th order derivatives of $\psi$ in equation \eqref{eikonr}. To summarize, the requirements
\begin{align*}
	\dt\psi(t,x^t) &= -c(x^t)|\xi^t|, \, \dpp_x\psi(t,x^t) = \xi^t,\\
	p\Par{x,\dt \psi(t,x),\dpp_x\psi(t,x)}&=0 \text{ on } x=x^t \text{ up to order } R,
\end{align*}
uniquely determine the spatial derivatives of $\psi$ on the ray up to the order $R$ under the knowledge of their initial values on $(0,x^0)$. We refer to \cite{Ralston82} for further details.
\subsubsection{A general relation between incident and reflected beams phases}
By \eqref{incHess},  the Hessian matrix of the incident beam's phase is related to the Jacobian matrix of the incident flow. One can prove that its higher order derivatives are also related to the higher order derivatives of the incident flow. Computations exhibiting such relations can be found for instance in the Appendix of \cite{Norris2}.
We shall give a nice relation between an incident phase $\phsi$ and the associated reflected phase $\phsr$ for beams of any order. This relation is  intuitive true on geometrical grounds and it provides with the derivatives of the reflected phase up to order $R$, which might be useful in applications of Gaussian beams. 

Consider the following auxiliary function linking $\varphi_{1}^t$ to $\varphi_{0}^t$ for any fixed time $t$ 
\begin{equation*}
\begin{split}
s_1: \Bic &\rightarrow \Bic \\
(x,\xi)&\mapsto \varphi_0^{-T_1(x,\xi)}\ron \Ref \ron \varphi_0^{T_1(x,\xi)}(x,\xi).
\end{split}
\end{equation*}
For a given point $(x,\xi)\in \Bic$, $s_1(x,\xi)$ is its "image by the mirror" $\dom$. For instance, Chazarain used this type of auxiliary functions in \cite{Chazarain} to show propagation of regularity for wave type equations in a convex domain.

By the Implicit functions theorem, $T_1$ is $\Co^\infty$ on the open set $\Bic$ and so is $s_1$. Since $\varphi_{0}^t\ o\ s_1$ satisfies the same Hamiltonian equations as $\varphi_{1}^t$ and $\varphi_1^{T_1(x,\xi)}(x,\xi) = \varphi_{0}^{T_1(x,\xi)}\ o\ s_1(x,\xi)$ for $(x,\xi) \in \Bic$, one has
$$\varphi_1^t = \varphi_{0}^t\ o\ s_1.$$
Besides, noticing that $T_1(\varphi_0^t)= T_1 - t$, one has also
\begin{equation}
	\label{sronp}
	\varphi_1^t =  s_1 \ o\ \varphi_{0}^t.
\end{equation}
$\varphi_0^t$ and $\varphi_1^t$ are symplectic $\Co^\infty$ diffeomorphisms from $\Bic$ to $\Bic$ \cite{Ivrii}, and so is $s_1$.
One can define a similar auxiliary function $s_{-1}:\Bic \rightarrow \Bic$ s.t. $\varphi_{-1}^t = \varphi_{0}^t\ o\ s_{-1}$ and $\varphi_{-1}^t =  s_{-1} \ o\ \varphi_{0}^t$ for $t \in \IR$.

Let us introduce the components of $s_1$ as
\begin{equation*}
	s_1=(r,\lambda).
\end{equation*}
For $m\in \IN$, $f$, $g$ functions in $\Co^\infty\Par{\IR^n_u\x(\IR^{n}_\xi\prive{0}),\IC^p}$, $u_0\in \IR^n$ a fixed point and $V \in \Co^\infty(\IR^{n}_u,\IC^n_\xi)$ a phase function  s.t. $V(u_0) \in \IR^{n}_\xi\prive{0}$, we introduce the notation 
$$f\Par{u,V(u)} \asp{m}{u=u_0} g\Par{u,V(u)},$$ 
to denote that the formal partial derivatives of $f\Par{u,V(u)}$ and $g\Par{u,V(u)}$ up to the order $m$ coincide on $u_0$. The differentiation here is viewed formally, since $V$ may be complex valued out of $u_0$, which makes $f(u,V(u))$ and $g(u,V(u))$ not defined for $u \ne u_0$. However, on the exact point $u_0$, one can always use the formula of composite functions derivatives to get a formal expression of the derivatives. We will use the same notation
$$f\Par{t,x,V(t,x)} \asp{m}{x=x^t} g\Par{t,x,V(t,x)},$$ 
for functions $f,g \in \Co^\infty\Par{\IR_t\x \IR^n_x\x(\IR^{n}_\xi\prive{0}),\IC^p}$ and phase function $V \in \Co^\infty(\IR_t\x \IR^n_x, \IC^n_\xi)$ s.t. for $t \in \IR$, $V(t,x^t) \in \IR^{n}_\xi\prive{0}$  to denote that the formal partial derivatives of $f\Par{t,x,V(t,x)}$ and $g\Par{t,x,V(t,x)}$ w.r.t. $x$ up to order $m$ coincide on $(t,x^t)$ for all $t \in \IR$. We will be sloppy with respect to the notation of the dependence of the phase $V$ on its variables.

Consider an integer $R \geq 2$ and an incident phase $\phsi$ satisfying
\begin{equation*}\begin{split}
&\dt \phsi(t,x_0^t) = - c(x_0^t) |\xi_0^t|, \, \dpp_x \phsi(t,x_0^t) = \xi_0^t \text{ and } p(x,\dt \phsi, \dpp_x \phsi)  \asp{R}{x=x_0^t} 0.
\end{split}\end{equation*}
As a particular case, the phase $\psi_0$ is obtained by setting $R=2$ and choosing its initial value on the ray as zero and its initial Hessian matrix on the ray as $i Id$.

Let $\phsr \in \Co^\infty(\IR_t \x \IR_x^n,\IC)$ be the reflected phase associated to $\phsi$, that is the phase satisfying
\begin{equation*}\begin{split}
& \dt \phsr(t,x_1^t) = - c(x_1^t) |\xi_1^t|,\, \dpp_x \phsr(t,x_1^t) = \xi_1^t \text{ and } p(x,\dt \phsr, \dpp_x \phsr)  \asp{R}{x=x_1^t} 0,
\end{split}\end{equation*}
and having the same time and tangential derivatives as $\phsi$ at the instant and the point of reflection $(T_1,x_0^{T_1})$ up to the order $R$.

Since $\varphi_0^t$ and the reflection $\Ref$ conserve $c(x)|\xi|$ (see \eqref{refInv}), one has for every $(x,\xi)\in \Bic$ and $\tau \in \IR^*$
\begin{equation*}
	p\Par{r(x,\xi),\tau,\lambda(x,\xi)} = p(x,\tau,\xi) . 
\end{equation*}
Thus 
$$ p\Par{r(x,\dpp_x \phsi),\dt \phsi,\lambda(x,\dpp_x \phsi)}  \asp{\infty}{x=x_0^t} p(x,\dt \phsi,\dpp_x \phsi) , $$
which implies, by construction of $\phsi$
\begin{equation}
\label{pMirrorInc}
	 p\Par{r(x,\dpp_x \phsi),\dt \phsi,\lambda(x,\dpp_x \phsi)} \asp{R}{x=x_0^t} 0. 
\end{equation}
Compare this with the equation
\begin{equation*}
	 p\Par{r(x,\dpp_x \phsi),\dt \phsr\Par{t,r(x,\dpp_x \phsi)},\dpp_x\phsr\Par{t,r(x,\dpp_x \phsi)}} \asp{R}{x=x_0^t} 0
\end{equation*}
resulting from the construction of $\phsr$ and \eqref{sronp}. This suggests the following Lemma
\begin{lemma}
\label{Inc&Ref}
$$\dt \phsr\Par{t,r(x,\dpp_x \phsi)} \asp{R-1}{x=x_0^t} \dt \phsi \text{ and }\dpp_x \phsr\Par{t,r(x,\dpp_x \phsi)} \asp{R-1}{x=x_0^t} \lambda(x,\dpp_x \phsi).$$
\end{lemma}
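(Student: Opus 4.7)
My plan is to proceed by induction on the order $r$, $0 \leq r \leq R-1$, showing that both identities hold to order $r$ on the incident ray at each fixed $t$. The base case $r = 0$ amounts to evaluating on the ray: by \eqref{sronp}, $s_1(x_0^t, \xi_0^t) = (x_1^t, \xi_1^t)$, so $\lambda(x_0^t, \xi_0^t) = \xi_1^t = \dpp_x\phsr(t, x_1^t)$ by construction of $\phsr$, and invariance of $c(x)|\xi|$ under $\Ref$ yields $\dt\phsr(t, x_1^t) = -c(x_1^t)|\xi_1^t| = -c(x_0^t)|\xi_0^t| = \dt\phsi(t, x_0^t)$.

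For the inductive step, set $A = \dt\phsr(t, r(x, \dpp_x\phsi))$, $B = \dpp_x\phsr(t, r(x, \dpp_x\phsi))$, $A' = \dt\phsi$ and $B' = \lambda(x, \dpp_x\phsi)$. I would first observe that both pairs annihilate the composed eikonal $p(r(x, \dpp_x\phsi), \cdot, \cdot) \asp{R}{x = x_0^t} 0$: the pair $(A, B)$ by composing the eikonal identity for $\phsr$ at $x_1^t$ with the smooth map $x \mapsto r(x, \dpp_x\phsi)$ which sends $x_0^t$ to $x_1^t$, and the pair $(A', B')$ by \eqref{pMirrorInc}. Subtracting and Taylor-expanding $p$ at $(A', B')$ in the last two slots yields
\begin{equation*}
\dpp_\tau p(r, A', B')\,(A - A') + \dpp_\xi p(r, A', B') \cdot (B - B') \asp{r}{x = x_0^t} 0,
\end{equation*}
since by the inductive hypothesis the quadratic residue in $(A-A', B-B')$ vanishes to order $2r \geq r + 1$ (recall $r \geq 1$).

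The main obstacle is that the preceding is a single scalar relation while $(A - A', B - B')$ has $n + 1$ components. To close the argument I would adopt the geometric viewpoint: the two pairs represent the fibre coordinates at $y = x_1^t$ of two Lagrangian submanifolds of $T^*\IR^n$, namely $\Lambda_r(t) = \{(y, \dpp_y\phsr(t, y))\}$ and the image $s_1(\Lambda_i(t))$ of $\Lambda_i(t) = \{(x, \dpp_x\phsi(t, x))\}$. Both are formally invariant (to order $R$) under the Hamiltonian flow $\varphi_0^t$ of $h_+$: the first by the eikonal satisfied by $\phsr$, the second by \eqref{sronp} together with the symplecticity of $s_1$. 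Consequently, their coincidence to order $R - 1$ at a single time will force coincidence at all times. At $t = T_1$ I would combine the matching construction of $\phsr$ (tangential and time derivatives of $\phsr$ and $\phsi$ coincide up to order $R$ at $x_0^{T_1}$) with the eikonal (which pins down the normal derivatives on each side, modulo the sign distinguishing incident from reflected branches) to verify that $s_1(\Lambda_i(T_1))$ and $\Lambda_r(T_1)$ coincide to order $R - 1$ at $(x_1^{T_1}, \xi_1^{T_1})$. Transporting this equality by $\varphi_0^t$ then yields the second identity of the lemma; the first follows by inserting the second into $\dt\phi = -c(x)|\dpp_x\phi|$, valid on both rays.
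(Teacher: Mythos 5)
Your overall strategy---realize the transported incident data $\Par{\dt \phsi,\, \lambda(x,\dpp_x \phsi)}$ as the data of a genuine phase satisfying the defining conditions of $\phsr$, then invoke uniqueness of the reflected phase---is in substance the paper's strategy, but your sketch leaves unproved exactly the points where the work lies, and one of them is the resolution of the obstacle you yourself identify. Having only the scalar relation $\dpp_\tau p\,(A-A') + \dpp_\xi p \cdot (B-B') \asp{r}{x=x_0^t} 0$ for $n+1$ unknown components, you need the difference to come from a single scalar potential, i.e.\ you need the composed field $x \mapsto \Par{r(x,\dpp_x \phsi),\, \lambda(x,\dpp_x \phsi)}$ to be, at the level of jets along the ray, the gradient graph of some phase $\theta$. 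This is not automatic from the slogan ``image of a Lagrangian submanifold under the symplectic map $s_1$'': the phases are complex off the ray, so there are no actual submanifolds, only formal Taylor coefficients, and the Lagrangian/graph properties must be verified by hand. The paper does precisely this: it establishes the symmetry condition \eqref{symm}, $B A^{-1} \asp{k-1}{x=x_0^t} \Par{A^T}^{-1} B^T$, from $M^T J M = J$ with $M = D s_1$, and---a point your proposal never addresses---the invertibility of $A(t,x_0^t,\xi_0^t) = D_x r + D_\xi r\, \dpp_x^2 \phsi$, proved via the invertibility of $U_1^t = D_y x_1^t + i D_\eta x_1^t$ (a kernel argument using symplecticity of $\varphi_1^t$). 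Without invertibility of $A$ your phrase ``fibre coordinates at $y = x_1^t$'' of $s_1(\Lambda_i(t))$ is not even defined, and without the symmetry condition the comparison of your putative Lagrangian jet with the gradient jet of $\phsr$ cannot be run: the ``one scalar equation versus $n+1$ components'' gap is closed only by these two facts.

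The matching at $t = T_1$ is likewise asserted rather than proved, and it is the other substantive computation. Since $\phsr$ is matched to $\phsi$ only in its time and tangential derivatives at $(T_1, x_0^{T_1})$, to get coincidence of full jets to order $R-1$ you must (i) check that the $s_1$-image of the incident jet has the same time and tangential derivatives there---the paper does this through a boundary parametrization $\sigma$, using that over $\dom$ one has $r(X,\Xi) = X$ and $\lambda(X,\Xi) = \Xi - 2\Par{\Xi \cdot \nu(X)}\nu(X)$, whence \eqref{dTangSig}---and (ii) run the recursion recovering normal derivatives from the eikonal, which again presupposes the formal-gradient structure just discussed. Granting all this, your time-transport step is a legitimate variant of, and is in substance contained in, the paper's appeal to the uniqueness of the reflected phase (eikonal to order $R$ on the ray plus data at the instant of reflection determine the jets through exactly the ODEs you have in mind; cf.\ \eqref{sronp}, \eqref{pMirrorInc} and the uniqueness discussion at the start of the Appendix). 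So the route is not wrong-headed, but as written the proposal takes for granted the three nontrivial verifications---formal gradient/symmetry via symplecticity of $s_1$, invertibility of $A$, and the boundary matching---that constitute the actual proof.
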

A similar result linking the reflected phase associated to the ray $(t,x_{-1}^{-t})$ to $\phsi$ can be established.

\begin{proof}
The strategy of the proof is the following: we consider a phase function $\theta$ satisfying the relations announced in Lemma \ref{Inc&Ref} and we prove that $\theta$ fulfills the eikonal equation on the reflected ray up to order $R$ and has the correct derivatives at the instant and point of reflection. This proves that $\theta$ coincides with the reflected phase on the reflected ray up to the order $R$.

Denote $r\Par{x,\dpp_x \phsi(t,x)}$ by $\varrho(t,x)$ or simply by $\varrho$ if no confusion arises and let us first verify that for a fixed $k \geq 1$ there exists a phase function $\theta \in \Co^\infty(\IR_t\x \IR^n_x,\IC)$ s.t.
\begin{equation}
\label{defTheta}
 \dpp_x \theta(t,\varrho) \asp{k}{x=x_0^t} \lambda(x,\dpp_x \phsi).
\end{equation}
Let $A(t,x,\xi) =  D_x r(x,\xi) + D_\xi r(x,\xi) \dpp_x^2 \phsi(t,x)$
and $B(t,x,\xi) =  D_x \lambda(x,\xi) + D_\xi \lambda(x,\xi) \dpp_x^2 \phsi(t,x)$. Then $D_x \varrho(t,x) = A(t,x,\dpp_x \phsi)$, $ D_x [\lambda(x,\dpp_x \phsi(t,x))] = B(t,x,\dpp_x \phsi)$ and for $v\in \Co^\infty(\IR_t\x \IR^n_x,\IC^p)$ one has
\begin{equation*}
	D_x\Par{v(t,\varrho)} \asp{\infty}{x=x_0^t}  D_x v(t,\varrho) A(t,x,\dpp_x\phsi).
\end{equation*}
Hence, $\theta$ exists if $A(t,x_0^t,\xi_0^t)$ is non singular and
\begin{equation}
\label{symm}
 B(t,x,\dpp_x\phsi)       A(t,x,\dpp_x\phsi)^{-1} \asp{k-1}{x=x_0^t} \Par{A(t,x,\dpp_x\phsi)^{T}}^{-1} B(t,x,\dpp_x\phsi)^T.
\end{equation}
From \eqref{sronp} one gets
$$ A(t,x_0^t,\xi_0^t) (D_y x_0^t+ iD_\eta x_0^t)= D_y x_1^t+ iD_\eta x_1^t.$$
Since $\varphi^t_1$ is symplectic, the matrix $\left(\begin{array}{cc} D_y x_1^t  &D_\eta x_1^t  \\ D_y \xi_1^t  &D_\eta \xi_1^t \\ \end{array} \right)$ is symplectic. This implies in particular the relation 
$$ D_\eta \xi_1^t (D_y x_1^t)^T- D_y \xi_1^t ({D_\eta x_1^t})^T =Id$$
and the symmetry of $D_y x_1^t (D_\eta x_1^t)^T$. Thus, $\ker (D_\eta x_1^t)^T \cap \ker (D_y x_1^t)^T =\{0\}$ and at the same time,
$$(D_y x_1^t+ iD_\eta x_1^t) (D_y x_1^t+ iD_\eta x_1^t)^* = D_y x_1^t (D_y x_1^t)^T + D_\eta x_1^t (D_\eta x_1^t)^T.$$
This proves that $D_y x_1^t+ iD_\eta x_1^t$ is invertible and so is $A(t,x_0^t,\xi_0^t)$.
On the other hand, 
\begin{equation*}
\left(\begin{array}{c} A\\B \end{array}\right) = \left( \begin{array}{cc} D_x r       & D_\xi r \\ D_x \lambda & D_\xi \lambda \end{array}\right)
\left( \begin{array}{c} Id\\ \dpp_x^2\phsi \end{array} \right). 
\end{equation*}
Let $M(x,\xi)=  \left(\begin{array}{cc} D_x r(x,\xi)       & D_\xi r(x,\xi) \\ D_x \lambda(x,\xi) & D_\xi \lambda(x,\xi) \end{array} \right)$.
Then
\begin{equation*}
[A^T B-B^T A] = \left( \begin{array}{c} Id\\ \dpp_x^2\phsi \end{array} \right)^T M^T J M \left(\begin{array}{c} Id\\ \dpp_x^2\phsi \end{array}\right).
\end{equation*}
Since $M^T J M = D s_1^T J D s_1  $, the symplecticity of $s_1$ leads to
\begin{equation*}
M^T J M = J. 
\end{equation*}
Hence 
\begin{equation*}
[A^T B-B^T A] = \left(\begin{array}{c}Id\\ \dpp_x^2\phsi \end{array} \right)^T  J  \left(\begin{array}{c} Id\\ \dpp_x^2\phsi \end{array} \right) = 0 
\end{equation*}
and the requirement \eqref{symm} is fulfilled.

The relation \eqref{defTheta} fixes the derivatives of $\dt\dpp_x\theta$ on $(t,x_1^t)$ up to order $k-1$. Indeed, using the compatibility condition
\begin{equation*}\begin{split}
&\frac{d}{dt} \Croch{ f\Par{t,x,\dpp_x \phsi(t,x)}|_{x=x_0^t}} \\
=& \dt  \Croch{f\Par{t,x,\dpp_x \phsi(t,x)}}|_{x=x_0^t}  + \dpp_x  \Croch{f\Par{t,x,\dpp_x \phsi(t,x)}}|_{x=x_0^t} \cdot \dot{x}_0^t 
\end{split}\end{equation*} 
on the maps $(t,x,\xi) \mapsto \dpp_x \theta\Par{t,r(x,\xi)}$, $(x,\xi) \mapsto \lambda(x,\xi)$ and their derivatives yields recursively by \eqref{defTheta}
$$ \dt \Croch{\dpp_x \theta(t,\varrho)} \asp{k-1}{x=x_0^t} D_\xi \lambda(x,\dpp_x \phsi) \dt \dpp_x \phsi.$$
Thus
$$ \dt \dpp_x \theta(t,\varrho) +  \dpp_x^2 \theta(t,\varrho) D_\xi r(x,\dpp_x \phsi) \dt\dpp_x \phsi \asp{k-1}{x=x_0^t} D_\xi \lambda(x,\dpp_x \phsi) \dt \dpp_x \phsi.$$
Using the relations $\dpp_x^2 \theta (t,\varrho) \asp{k-1}{x=x_0^t} \Par{BA^{-1}}(t,x,\dpp_x \phsi)$ and \eqref{symm} in the previous equation yields
\begin{equation*}\begin{split}
 &\dt \dpp_x \theta(t,\varrho) \\
 \asp{k-1}{x=x_0^t} &\Croch{D_\xi \lambda(x,\dpp_x \phsi)- \Par{\Par{A^T}^{-1} B^T}(t,x,\dpp_x \phsi) D_\xi r(x,\dpp_x \phsi) } \dt \dpp_x \phsi. 
\end{split}\end{equation*}
Since
\begin{equation*}
A^TD_\xi \lambda-B^T D_\xi r = \left(\begin{array}{c} Id\\ \dpp_x^2\phsi \end{array} \right)^TM^T J M \left( \begin{array}{c} 0\\ Id \end{array}\right)=Id,
\end{equation*}
it follows that 
$ A(t,x,\dpp_x \phsi)^T \dt \dpp_x \theta(t,\varrho) \asp{k-1}{x=x_0^t}  \dt \dpp_x \phsi $.
Note that
\begin{equation}
\label{dScalOMirr}
	\dpp_x\Par{u(t,\varrho)} \asp{\infty}{x=x_0^t} A(t,x,\dpp_x\phsi)^T \dpp_x u(t,\varrho) \text{ for } u\in \Co^\infty(\IR_t\x \IR^n_x,\IC),
\end{equation}
so one gets
$$ \dpp_x\Par{\dt \theta(t,\varrho)} \asp{k-1}{x=x_0^t}  \dt \dpp_x \phsi .$$
Setting $\dt \theta(t,x_1^t) = \dt \phsi(t,x_0^t)$ implies then that
\begin{equation}
\label{dtTheta}
 \dt \theta(t,\varrho) \asp{k}{x=x_0^t}  \dt  \phsi. 	
\end{equation}
Putting together \eqref{pMirrorInc}, \eqref{defTheta} and \eqref{dtTheta} shows that the phase $\theta$ satisfies
\begin{equation*}
	 p\Par{\varrho,\dt \theta(t,\varrho),\dpp_x \theta(t,\varrho)} \asp{R}{x=x_0^t} 0 
\end{equation*}
under the further assumption $k \geq R$.

Let $\pi(t,x)= p\Par{x,\dt \theta(t,x),\dpp_x \theta(t,x)}$. Since $ \dpp_x \Par{\pi(t,\varrho)}(t,x_0^t)=0$ and 
\\
$A(t,x_0^t,\xi_0^t)$ is non singular, it follows by \eqref{dScalOMirr} that $\dpp_x \pi(t,x_1^t)$ is zero. More generally, for $m\geq 1$, the formula of composite functions' high derivatives yields
\begin{equation*}\begin{split} 
 \dpp_{x_{i_1}}\dots\dpp_{x_{i_m}}  \Croch{\pi\Par{t,\varrho(t,x)}}(t,x_0^t) =& \somme{j_1,\dots,j_m=1}{n}  \dpp_{x_{j_1}}\dots \dpp_{x_{j_m}} \pi(t,x_1^t) \overset{n} { \underset {k=1} {\prod} } A_{j_k i_k}(t,x_0^t,\xi_0^t) \\
 & + z_{i_1 \dots i_m}(t),
\end{split}\end{equation*}
where $z_{i_1 \dots i_m}$ depends on derivatives of $\pi$ on $(t,x_1^t)$ of order lower than $m$. For $m \leq R$, the l.h.s. is zero so one can show recursively on $|\beta|\leq R$ that $\dpp_x^\beta \pi(t,x_1^t)=0$. One thus has the following eikonal equation on $\theta$
\begin{equation*}
p(x,\dt \theta,\dpp_x \theta) \asp{R}{x=x_1^t} 0. 
\end{equation*}
To compare the time and tangential derivatives of $\theta$ and $\phsi$ at $(T_1,x_0^{T_1})$, let us introduce a $\Co^\infty$ parametrization of a neighborhood $\Um$ of $x_0^{T_1}$ in $\dom$
\begin{equation*}
	\sigma :\Nm \rightarrow \IR^n ,
\end{equation*}
where $\Nm $ is an open subset of $\IR^{n-1}$, $\sigma(\Nm)=\Um$ and $\sigma$ is a diffeomorphism from $\Nm$ to $\Um$. 
For $x\in \IR^n$ close to $x_{0}^{T_1}$, we may write $x=\sigma(\hat v)+v_n \nu\Par{\sigma(\hat v)},$ with $\hat v \in \Nm$ and $v_n\in \IR$.
Denote $\sigma(\hat v_1) = x_{0}^{T_1}$ and set $\theta_b(t,\hat v) = \theta\Par{t,\sigma(\hat v)}$ and $\Par{\phsi}_b(t,\hat v) = {\phsi}\Par{t,\sigma(\hat v)}$ the phases at the boundary near $x_0^{T_1}$. Since $r(X,\Xi)=X$ for $(X,\Xi) \in \overset{o}{T^*\IR^n}|_{\dom}$, it follows that
\begin{equation*}
	\varrho\Par{t,\sigma(\hat v)} \asp{\infty}{(t,\hat v) = (T_1,\hat v_1)} \sigma(\hat v),
\end{equation*}
which implies by \eqref{dtTheta} that
\begin{equation*}
 \dt \theta_b \asp{k}{(t,\hat v) = (T_1,\hat v_1)}  \dt  \Par{\phsi}_b.
\end{equation*}
Similarly $\lambda(X,\Xi) = \Xi - 2\Par{\Xi \cdot \nu(X)}\nu(X)$ for $(X,\Xi) \in \overset{o}{T^*\IR^n}|_{\dom}$, leading to
\begin{equation}
\label{dTangSig}
	D\sigma(\hat v)^T \lambda\Par{\sigma(\hat v), \dpp_x\phsi\Par{t,\sigma(\hat v)}} \asp{\infty}{(t,\hat v) = (T_1,\hat v_1)} 	D\sigma(\hat v)^T \dpp_x\phsi\Par{t,\sigma(\hat v)}.
\end{equation}
Since $\dpp_{\hat v} \theta_b(t,\hat v) = D \sigma(\hat v)^T \dpp_x \theta \Par{t,\sigma(\hat v)}$ and a similar relation holds true for $\dpp_{\hat v} \Par{\phsi}_b$, one gets from \eqref{defTheta} and \eqref{dTangSig} that
$\dpp_{\hat v} \theta_b \asp{k}{(t,\hat v) = (T_1,\hat v_1)}  \dpp_{\hat v}  \Par{\phsi}_b.$
Hence $\theta_b$ and $\Par{\phsi}_b$ have the same time and tangential derivatives at $(T_1,\hat v_1)$ from the order $1$ to the order $k+1$.
\\
If we assume that $\theta(T_1,x_0^{T_1})=\phsi(T_1,x_0^{T_1}),$ 
then 
\begin{equation*}
 \theta_b \asp{k+1}{(t,\hat v) = (T_1,\hat v_1)}  \Par{\phsi}_b,
\end{equation*}
and $\theta$ satisfies all the requirements that determine the reflected phase associated to $\phsi$ and concentrated on $(t,x_1^t)$. 
The phases $\theta$ and $\phsr$ are thus equal on $(t,x_1^t)$ up to the order $R$.
\end{proof}

\subsubsection{First order reflected beams' phases and amplitudes}

Lemma \ref{Inc&Ref} gives at order one
\begin{equation*}
\begin{split}
 &\dpp_x^2 \psi_1(t,x_1^t) \Par{D_x r(x_0^t,\xi_0^t)+D_\xi r(x_0^t,\xi_0^t) \dpp_x^2 \psi_0(t,x_0^t)} \\
=& D_x \lambda(x_0^t,\xi_0^t)+D_\xi \lambda(x_0^t,\xi_0^t) \dpp_x^2 \psi_0(t,x_0^t).
\end{split}
\end{equation*}
One obtains by plugging the expression \eqref{incHess} of $\dpp_x^2\psi_0(t,x_0^t)$
$$ \dpp_x^2 \psi_1(t,x_1^t) \Par{D_x r(x_0^t,\xi_0^t)U_0^t+D_\xi r(x_0^t,\xi_0^t) V_0^t} = D_x \lambda(x_0^t,\xi_0^t)U_0^t+D_\xi \lambda(x_0^t,\xi_0^t)V_0^t.$$
From \eqref{sronp}, it follows that
\begin{equation*}
\dpp_x^2 \psi_k(t,x_k^t)=V_k^t (U_k^t)^{-1} \text{ where }U_k^t=D_y x_k^t+i D_\eta x_k^t \text{ and }V_k^t=D_y \xi_k^t+i D_\eta \xi_k^t,
\end{equation*}
and a similar relation holds true for $\dpp_x^2 \psi_{-1}(t,x_{-1}^t)$.

The reflected amplitudes evaluated on the associated rays satisfy transport equations which are similar to \eqref{eqAmpl} and may be written as
\begin{equation*}
\frac{d}{dt}\Par{{a_0^{k}}^{(')}(t,x_k^t)}+\ud\Tr \Croch{\Par{H_{21}(x^t_k,\xi^t_k)U_k^t + H_{22}(x_k^t,\xi^t_k)V_k^t}(U_k^t)^{-1}}{a_0^{k}}^{(')}(t,x_k^t)=0.
\end{equation*}
One can obtain a similar equation to \eqref{eqU} on $U_k^t$ involving $H_{21}(x_k^t,\xi_k^t)$ and 
\\
$H_{22}(x_k^t,\xi_k^t)$, by using the relation $\varphi_k^t=\varphi_{0}^t \ o\ s_k$.
On the whole
\begin{equation*}
{a_0^{k}}^{(')}(t,x_k^t)= {a_0^{k}}^{(')}(T_k,x_0^{T_k}) \Par{\frac{\det U_k^t}{\det U_k^{T_k}}}^{-\ud}, \, k= \pm 1,
\end{equation*}
where the square root is obtained by continuity from $1$ at $t=T_k$.
\\
On the other hand, for $k= \pm 1$
\begin{equation*}
	d_{-m_B}^0 + d_{-m_B}^k = b(x,\dpp_x \psi_0) a_0^0 + b(x,\dpp_x \psi_k) a_0^k, 
\end{equation*}
where $b$ denotes the principal symbol of $B$. Thus, the condition \ref{refAmp} p.\pageref{refAmp} required for the construction of the reflected amplitudes implies that
${a_0^{k}}^{(')}(T_k,x_0^{T_k})= s {a_0^{0}}^{(')}(T_k,x_0^{T_k})$, with $s = -1$ for Dirichlet condition and $s = 1$ for Neumann condition.

In order to find the relationship between $U_k^{T_k}$ and $U_{0}^{T_{k}}$ for $k= \pm 1$, we differentiate the equality $x_k^{T_k}=x_{0}^{T_k}$
\begin{equation*}\begin{split}
D_{y,\eta} x_k^{T_k} + \dot{x}_k^{T_k} (\dpp_{y,\eta} T_k)^T = D_{y,\eta} x_0^{T_k} + \dot{x}_0^{T_k} (\dpp_{y,\eta} T_k)^T,
\end{split}\end{equation*}
and compute the derivatives of $T_k$ from the condition $x_0^{T_k} \in \dom$
\begin{equation*}
	\dpp_{y,\eta} T_k = - \frac{1}{\Par{\dot{x_0}^{T_k} \cdot \nu(x_0^{T_k})}}\, (D_{y,\eta} x_0^{T_k})^T \nu(x_0^{T_k})
\end{equation*}
to get after elementary computations
\begin{equation}
\label{Uref}
	U_k^{T_k} = \Par{Id-2\nu(x_{0}^{T_k})\nu(x_{0}^{T_k})^T} U_{0}^{T_{k}}.
\end{equation}
Hence
\begin{equation*}
a_0^{k}(t,x_k^t) = -s i \Par{\det U_k^t}^{-\ud} \text{ and } {a_0^{k}}'(t,x_k^t)= s (c(y)|\eta|)^{-1} \Par{\det U_k^t}^{-\ud} \text{ for } k= \pm 1,
\end{equation*}
where the square root is defined by continuity from $i [\det U_0^{T_k}]^{-\ud}$ at $t=T_k$.

\subsection{Approximation operators} 															\label{AppB} 				We briefly recall a simple version of the integral operators with complex phases used in \cite{BoAkAl09} and the estimates established therein. We then use these results to prove Lemma \ref{dtudxu}.

For $t\in[0,T]$, let $K_{z,\theta}(t)$ be a compact of $\IR^{2n}$ and consider the set $$E_1=\{(t,x,z,\theta)\in [0,T]\x\IR^{3n},\, (z,\theta)\in K_{z,\theta}(t),\, |x-z|\leq 1\},$$ which we assume compact. Let $\Phi$ be a phase function smooth on an open set containing $E_{1}$ and satisfying \eqref{propPhaseBis} for $t\in [0,T]$ and $(z,\theta)\in K_{z,\theta}(t)$. Then there exists $r[\Phi]\in]0,1]$ s.t.
\begin{equation*}
	\Im \Phi(t,x,z,\theta) \geq C (x-z)^2 \text{ for } t \in [0,T], \, (z,\theta) \in K_{z,\theta}(t) \text{ and } |x-z| \leq r[\Phi].
\end{equation*}
Let $l_\ep \in \Co^\infty([0,T]\x\IR^{3n},\IC)$ satisfying
\begin{equation}
\label{propAmpBis}
\begin{split} 
	&\text{for } t\in [0,T], l_\ep(t,x,z,\theta) = 0  \text{ if } (z,\theta) \notin K_{z,\theta}(t) \text{ or }|x-z| > r[\Phi],\\
  &\ep^{\frac{k}{2} }\dpp^k_{x_j} l_\ep \text{ is uniformly bounded in } L^\infty([0,T] \x \IR^{3n}) \text{ for every } 1\leq j\leq n \text{ and } k\in \IN.
 \end{split}
\end{equation}
If $O^\alp\Par{l_\ep(t,.),\Phi(t,.)/\ep}$ denotes, for a given multiindex $\alp$ and $t\in[0,T]$, the operator
\begin{equation*}\begin{split}
&\Croch{O^\alp\Par{l_\ep(t,.),\Phi(t,.)/\ep}h}(x)\\
 =&\int_{\IR^{2n}}  h(z,\theta) l_\ep(t,x,z,\theta)  (x-z)^\alp e^{i\Phi(t,x,z,\theta)/\ep} dz d\theta,\, h\in L^2(\IR^{2n}),
\end{split}\end{equation*}
then, under the previous hypotheses on $\Phi$ and $l_\ep$, we have the following estimate:
\begin{proposition}\emph{(\cite{BoAkAl09}, Lemma 3.3)} 
\label{appOp}
\\
$\|O^\alp\Par{ l_\ep(t,.),\Phi(t,.)/\ep}\|_{L^2(\IR^{2n})\rightarrow L^2(\IR^n)}\lesssim \, \ep^{\tnq+\frac{|\alp|}{2}}$ uniformly w.r.t. $t\in[0,T]$.
\end{proposition}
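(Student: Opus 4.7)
The plan is to use a $T^*T$ argument. Write $T=O^\alp\Par{l_\ep(t,.),\Phi(t,.)/\ep}$ and let it suffice to establish $\|T^*T\|_{L^2(\IR^{2n})\to L^2(\IR^{2n})}\lesssim \ep^{\tnd+|\alp|}$, whence $\|T\|=\sqrt{\|T^*T\|}\lesssim\ep^{\tnq+|\alp|/2}$ follows. The Schwartz kernel of $T^*T$ is
\begin{equation*}
K\Par{(z,\theta),(z',\theta')} = \int_{\IR^n} \overline{l_\ep(t,x,z,\theta)}\, l_\ep(t,x,z',\theta')\, (x-z)^\alp (x-z')^\alp\, e^{i\chi(x)/\ep}\,dx,
\end{equation*}
with complex phase $\chi(x)=\Phi(t,x,z',\theta')-\overline{\Phi(t,x,z,\theta)}$, supported where $(z,\theta),(z',\theta')\in K_{z,\theta}(t)$ and $|x-z|,|x-z'|\leq r[\Phi]$.

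The first step exploits the positivity $\Im\Phi\geq C|x-z|^2$ from \eqref{propPhaseBis}, which yields $|e^{i\chi/\ep}|\leq e^{-C(|x-z|^2+|x-z'|^2)/\ep}$. By the parallelogram identity this factors as $e^{-2C|x-(z+z')/2|^2/\ep}\,e^{-\frac{C}{2}|z-z'|^2/\ep}$, so $x$ is effectively confined to a $\sqrt\ep$-neighbourhood of $(z+z')/2$ and $|z-z'|\lesssim\sqrt\ep$ up to exponentially small remainder. A direct modulus estimate yields $|K|\lesssim \ep^{\nd}(\sqrt\ep+|z-z'|)^{2|\alp|}e^{-C|z-z'|^2/\ep}$, but Schur's lemma applied to this alone gives only $\|T\|\lesssim \ep^{n/4+|\alp|/2}$, short by a factor $\ep^{n/4}$.

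The missing gain comes from oscillation in $\theta-\theta'$. From \eqref{propPhaseBis} one has $\dpp_x\Phi(t,z,z,\theta)=\theta$, so $\dpp_x\chi(x)|_{x=z=z'}=\theta'-\theta$ (using that $\theta\in\IR^n$); by continuity $|\dpp_x\chi|\gtrsim r:=|\theta-\theta'|+\sqrt\ep$ on the effective support. I will introduce the first-order operator $L=(i|\dpp_x\chi|^2)^{-1}\overline{\dpp_x\chi}\cdot\dpp_x$, satisfying $L(e^{i\chi/\ep})=\ep^{-1}e^{i\chi/\ep}$, and integrate by parts $N$ times using its transpose $L^t$. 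Hypothesis \eqref{propAmpBis} gives $|\dpp_{x_j}^k l_\ep|\lesssim \ep^{-k/2}$, each $x$-derivative of $(x-z)^\alp(x-z')^\alp$ costs at most $\ep^{-1/2}$ relative to its on-support size, and the derivatives of $\dpp_x\chi$ are bounded uniformly on the compact support; hence each application of $L^t$ contributes a factor $O(\ep^{-1/2}r^{-1})$. Balancing this against $\ep^N$ from $L^{t,N}$ yields the refined estimate
\begin{equation*}
\Modul{K\Par{(z,\theta),(z',\theta')}} \lesssim \ep^{\nd}(\sqrt\ep+|z-z'|)^{2|\alp|}\Par{\frac{\sqrt\ep}{|\theta-\theta'|+\sqrt\ep}}^N e^{-C|z-z'|^2/\ep}.
\end{equation*}

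Finally, Schur's test closes the argument. For $N>n$, rescaling gives $\int_{\IR^n}(\sqrt\ep/(|\tau|+\sqrt\ep))^N d\tau\lesssim\ep^{\nd}$ and $\int_{\IR^n}(\sqrt\ep+|w|)^{2|\alp|}e^{-C|w|^2/\ep}dw\lesssim\ep^{\nd+|\alp|}$, so
\begin{equation*}
\sup_{(z,\theta)}\int \Modul{K}\,dz'\,d\theta' \lesssim \ep^{\nd}\cdot\ep^{\nd+|\alp|}\cdot\ep^{\nd} = \ep^{\tnd+|\alp|},
\end{equation*}
and the symmetric supremum obeys the same bound. Schur's lemma then yields $\|T^*T\|\lesssim\ep^{\tnd+|\alp|}$ uniformly in $t\in[0,T]$, as required. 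The main obstacle is the combinatorial bookkeeping in iterating $L^t$: one must verify that across the Leibniz expansion of $(L^t)^N a$ every term scales at most like $(\sqrt\ep/r)^N$ times the base polynomial amplitude, with particular care at the crossover regime $|\theta-\theta'|\sim\sqrt\ep$, where $|\dpp_x\chi|$ bottoms out at order $\sqrt\ep$ rather than vanishing.
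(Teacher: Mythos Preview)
The paper does not supply a proof of this proposition; it is quoted verbatim from \cite{BoAkAl09}, Lemma 3.3, so there is no in-paper argument to compare against. Your $T^*T$--Schur strategy, combined with integration by parts to capture the oscillation in $\theta-\theta'$, is the standard route for such operator bounds and is essentially correct in outline.

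Two points deserve attention. First, a slip: the direct modulus bound gives $\|T\|\lesssim\ep^{n/2+|\alp|/2}$, not $\ep^{n/4+|\alp|/2}$ (Schur on $|K|\lesssim\ep^{n/2}(\sqrt\ep+|z-z'|)^{2|\alp|}e^{-C|z-z'|^2/\ep}$ yields $\|T^*T\|\lesssim\ep^{n/2}\cdot\ep^{n/2+|\alp|}\cdot O(1)$ from the $z'$- and bounded $\theta'$-integrals). This is indeed short by $\ep^{n/4}$, as you say, so your conclusion is unaffected.

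Second, and more substantively, the claim that $|\dpp_x\chi|\gtrsim r=|\theta-\theta'|+\sqrt\ep$ on the effective support, and the remark that $|\dpp_x\chi|$ ``bottoms out at order $\sqrt\ep$'', are not correct as stated. The Taylor expansion gives $\dpp_x\chi=(\theta'-\theta)+O(|x-z|+|x-z'|)$, so at $x=z=z'$, $\theta=\theta'$ one has $\dpp_x\chi=0$ exactly; and more importantly the integration by parts must be justified on the \emph{actual} support of the amplitude, where $|x-z|,|x-z'|\leq r[\Phi]=O(1)$, not merely on the Gaussian-localised region. There the $O(|x-z|+|x-z'|)$ term can cancel $\theta'-\theta$ entirely. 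The clean fix is a dyadic split: for $|\theta-\theta'|\leq M\sqrt\ep$ use the direct estimate (the $\theta'$-volume $\lesssim\ep^{n/2}$ supplies the missing factor); for $|\theta-\theta'|>M\sqrt\ep$ insert a cutoff localising $|x-z|+|x-z'|\leq c|\theta-\theta'|$ (derivatives of this cutoff are $O(|\theta-\theta'|^{-1})\leq O(\ep^{-1/2})$, consistent with \eqref{propAmpBis}), on which piece $|\dpp_x\chi|\gtrsim|\theta-\theta'|$ holds genuinely and the iterated IBP goes through, while the complementary piece is killed by the Gaussian factor $e^{-c'|\theta-\theta'|^2/\ep}$. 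With this decomposition your unified bound $(\sqrt\ep/(|\theta-\theta'|+\sqrt\ep))^N$ follows and the Schur computation closes exactly as you wrote.
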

This estimate allows to prove Lemma \ref{dtudxu}.

\begin{proof}[Proof of Lemma \ref{dtudxu}]
		Consider the integrals \eqref{dtutilde} giving the derivatives of $u^{appr}_{\ep,\indx}$ and fix $j,k$ and $\alp$. The transported phase $\trak{\psi_k}$ is smooth and satisfies by \eqref{PhaseOneBis}, \eqref{PhaseZeroBis} and \eqref{PhaseTwoBis} the  properties \eqref{propPhaseBis} for $t\in[0,T]$ and $(z,\theta) \in K_{z,\theta}^k(t)$. We fix some $r[\trak{\psi_k}]\in]0,1]$ so that $	\Im \trak{\psi_k}(t,x,z,\theta)  \geq C (x-z)^2$ for $t \in [0,T]$, $(z,\theta) \in K_{z,\theta}^k(t)$ and $|x-z| \leq r[\trak{\psi_k}]$.
\\
For $t\in [0,T]$, $\Pi_k \trak{\rhou \ox \phiu}(t,z,\theta) \trak{\Par{ r_{j,\alp}^k}}(t,x,z,\theta)$ depends smoothly on its variables and vanishes for $|x-z|>d$ or $(z,\theta) \notin K_{z,\theta}^k(t)$. Hence, upon choosing $d \leq  r[\trak{\psi_k}]$, the amplitude $\Pi_k \trak{\rhou \ox \phiu} \trak{\Par{r_{j,\alp}^k}}$  satisfies the properties formulated in \eqref{propAmpBis}.
Let us check if $\Char{\Bic}\trak{f_\ep}= \Char{\Bic}\trak{T_\ep v^I_{\ep,\indx}}, \Char{\Bic}\ep^{-1} \trak{T_\ep u^I_{\ep,\indx}}$ is uniformly bounded in $L^2(\IR^{2n})$. Clearly $T_\ep v^I_{\ep,\indx}$ is, and the property holds true for $\ep^{-1} T_\ep u^I_{\ep,\indx}$ by Lemma \ref{estimu}. One can then use the approximation operators $O^\alp$ to write the integral \eqref{dtutilde} as 
\begin{equation*}\begin{split}
 & \ep^{-\tnq+j}  \int_{\IR^{2n}} \Pi_k \trak{\rhou \ox  \phiu}\, \trak{f_\ep} \trak{(r_{j,\alp}^k)} (x-z)^\alp e^{i\trak{\psi_k} /\ep} dz d\theta \\
=& \ep^{-\tnq+j} O^\alp\Par{\Pi_k \trak{\rhou \ox \phiu} \trak{(r_{j,\alp}^k)}(t,.),\trak{\psi_k}(t,.)/\ep}  \,\Char{\Bic} \trak{f_\ep}.
\end{split}\end{equation*}
The estimate established in Proposition \ref{appOp} yields
\begin{equation*}
\| \ep^{-\tnq+j}  \int_{\IR^{2n}} \Pi_k \trak{\rhou \ox  \phiu}\, \trak{f_\ep} \trak{(r_{j,\alp}^k)} (x-z)^\alp e^{i\trak{\psi_k} /\ep} dz d\theta\|_{L^2_x} \lesssim \ep^{\frac{|\alp|}{2}+j}.
\end{equation*} 
Hence, only $\trak{(r_{0,0}^k)}$ contributes to $\dpp_{t,x} {u}^{appr}_{\ep,\indx}$, the residue being of order $\sqrt \ep$. One has
\begin{equation*}\begin{split}
&r_{0,0}^k(t,x,y,\eta)= \frac i 2 c_n \beta_k  \dpp_{t,x} \psi_k(t,x_k^t)  \chi_d(x-x_k^t) a_k^{(')}(t,y,\eta),
\end{split}\end{equation*}
and by \eqref{PhaseOneBis}
\begin{equation*}\begin{split}
&\dt \psi_k(t,x_k^t)  = - c(x_k^t)|\xi_k^t|, \, \dpp_x \psi_k(t,x_k^t)  = \xi_k^t.
\end{split}\end{equation*}
It follows that  
\begin{equation*}\begin{split}
&\dt {u}^{appr}_{\ep,\indx}(t,x)\\
=&\ud\ep^{-\tnq}c_n\int_{\IR^{2n}}  \sum_{k=0,1}(-i) \beta_k c(z)|\theta| \chi_d(x-z)\Pi_k(t,z,\theta) \trak{\rhou\ox \phiu}(t,z,\theta)    \\
&\,\,   \qquad \qquad \qquad \qquad \trak{p_{\ep,k}}(t,z,\theta)e^{i\trak{\psi_k}(t,x,z,\theta)/\ep}  dz d\theta  \\
&+ \ud\ep^{-\tnq}c_n\int_{\IR^{2n}} \sum_{k=0,-1} i \beta_k c(z)|\theta|  \chi_d(x-z) \Pi_k(-t,z,\theta) \trak{ \rhou \ox \phiu}(-t,z,\theta)\\
&\qquad \qquad \qquad \qquad \qquad \trak{q_{\ep,k}}(-t,z,\theta)e^{i\trak{\psi_{k}}(-t,x,z,\theta)/\ep}) dz d\theta \\
& +O(\sqrt \ep)
\end{split}\end{equation*}
in $L^2(\IR^n)$, uniformly for $t\in [0,T]$ and
\begin{equation*}\begin{split}
&\dpp_{x_j} {u}^{appr}_{\ep,\indx}(t,x)\\
=&\ud\ep^{-\tnq}c_n\int_{\IR^{2n}}  \sum_{k=0,1} i \beta_k \theta_j \chi_d(x-z) \Pi_k(t,z,\theta) \trak{\rhou\ox \phiu}(t,z,\theta)  \\
&\,     \qquad \qquad \qquad \qquad \trak{p_{\ep,k}}(t,z,\theta)e^{i\trak{\psi_k}(t,x,z,\theta)/\ep}  dz d\theta  \\
&+ \ud\ep^{-\tnq}c_n\int_{\IR^{2n}} \sum_{k=0,-1}  i \beta_k \theta_j \chi_d(x-z) \Pi_k(-t,z,\theta) \trak{ \rhou \ox \phiu}(-t,z,\theta)   \\
&\qquad \qquad \qquad \qquad \qquad \trak{q_{\ep,k}}(-t,z,\theta)e^{i\trak{\psi_{k}}(-t,x,z,\theta)/\ep} dz d\theta  \\
& +O(\sqrt \ep)
\end{split}\end{equation*}
in $ L^2(\IR^n)$, uniformly w.r.t. $t\in [0,T]$.

One can get rid of the cut-off $\chi_d(x-z)$ appearing in $\dt {u}^{appr}_{\ep,\indx}(t,x)$ 
\\
and $\dpp_{x_j}{u}^{appr}_{\ep,\indx}(t,x)$ by using the estimate \eqref{noCutof}.
\end{proof}

\subsection{Results related to the FBI and the Wigner transforms} \label{AppC} 				
\begin{lemma} 
\label{FBI2}
For $u$ in $L^2(\IR^n)$
\begin{equation*}
T_\ep u(y,\eta)= c_n (2\pi)^{-\nd} \ep^{-\nq} \int_{\IR^n} \Fo u(\xi)e^{-(\eta-\ep\xi)^2/(2\ep)} e^{i \xi\cdot y} d\xi.
\end{equation*}
\end{lemma}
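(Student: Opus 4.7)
The identity is essentially an exchange-of-integration computation expressing the convolution-like structure of $T_\ep$ through the Fourier transform, followed by evaluating a Gaussian integral. The natural strategy is to substitute the Fourier inversion formula
\[
u(x) = (2\pi)^{-n} \int_{\IR^n} \Fo u(\xi) e^{i x \cdot \xi} d\xi
\]
into the defining expression \eqref{FBI1Bis} of $T_\ep u(y,\eta)$ and to interchange the order of integration in $x$ and $\xi$. To justify Fubini under only the $L^2$ hypothesis, I would first establish the identity for $u \in \So(\IR^n)$, where absolute convergence is automatic, and then extend to $u \in L^2(\IR^n)$ using the fact that both sides define continuous linear maps from $L^2(\IR^n)$ into a suitable function space: the left side by the $L^2$-boundedness (isometry) of $T_\ep$, the right side by Plancherel combined with the uniform $L^2$ bound on the integral kernel in $\xi$.

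Once the exchange is allowed, the computation reduces to the inner integral
\[
I(y,\eta,\xi) \;=\; \int_{\IR^n} e^{i x \cdot \xi} e^{i \eta \cdot (y-x)/\ep - (y-x)^2/(2\ep)} dx.
\]
After the change of variables $z = y-x$, this becomes $e^{i y \cdot \xi}$ times a Gaussian integral in $z$ with linear phase $z \cdot (\eta/\ep - \xi)$. Completing the square in the exponent $-z^2/(2\ep) + i z \cdot (\eta/\ep - \xi)$ and using the standard Gaussian formula $\int_{\IR^n} e^{-z^2/(2\ep)} dz = (2\pi\ep)^{n/2}$ yields
\[
I(y,\eta,\xi) = (2\pi\ep)^{n/2} e^{i y \cdot \xi} e^{-\ep(\eta/\ep - \xi)^2/2} = (2\pi\ep)^{n/2} e^{i y \cdot \xi} e^{-(\eta - \ep\xi)^2/(2\ep)}.
\]

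Finally, collecting the constants gives
\[
T_\ep u(y,\eta) = c_n \ep^{-\tnq} \cdot (2\pi)^{-n} \cdot (2\pi\ep)^{\nd} \int_{\IR^n} \Fo u(\xi) e^{i y \cdot \xi} e^{-(\eta-\ep\xi)^2/(2\ep)} d\xi,
\]
and since $(2\pi)^{-n} (2\pi\ep)^{\nd} = (2\pi)^{-\nd} \ep^{\nd}$ and $\ep^{-\tnq} \ep^{\nd} = \ep^{-\nq}$, the constants collapse to $c_n (2\pi)^{-\nd} \ep^{-\nq}$, matching the claim. The only delicate step is the Fubini/density argument for $L^2$ data; the Gaussian computation itself is routine and the power-counting in $\ep$ is automatic.
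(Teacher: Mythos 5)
Your proof is correct: the completion of the square, the Gaussian integral, and the power counting in $\ep$ all check out, and the constants collapse as claimed. The paper, however, disposes of the lemma by a more direct route, and the comparison is instructive. For fixed $(y,\eta)$, the paper reads $T_\ep u(y,\eta)$ as the $L^2$ pairing $c_n\ep^{-\tnq}\int_{\IR^n} u(x)\,\overline{g_{y,\eta}(x)}\,dx$ against the fixed Gaussian $g_{y,\eta}(x)=e^{-i\eta\cdot(y-x)/\ep-(y-x)^2/(2\ep)}$, and applies the Parseval formula $\int u\,\bar g\,dx=(2\pi)^{-n}\int \Fo u\,\overline{\Fo g}\,d\xi$. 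Since $g_{y,\eta}\in\So(\IR^n)$, this holds for every $u\in L^2(\IR^n)$ at once---no Fubini, no density argument---and the only computation left is the Fourier transform of the Gaussian, which is (up to conjugation) exactly the inner integral $I(y,\eta,\xi)$ you evaluate. So the two proofs share the same kernel computation, but Parseval applied to the pairing absorbs precisely the step you flag as delicate: the Schwartz-density extension becomes unnecessary because Parseval is already valid on all of $L^2$, and for each fixed $(y,\eta)$ both sides are bounded linear functionals of $u$ by Cauchy--Schwarz, since both kernels $x\mapsto g_{y,\eta}(x)$ and $\xi\mapsto e^{-(\eta-\ep\xi)^2/(2\ep)}$ lie in $L^2$. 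Your route costs a little extra bookkeeping but proves the same identity; if you keep it, make explicit that the passage to the $L^2$ limit is carried out pointwise in $(y,\eta)$ via these Cauchy--Schwarz bounds, since the phrase ``a suitable function space'' leaves that identification implicit.
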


\begin{proof}
The equality is proven by Parseval formula.
\end{proof}

\begin{lemma}\emph{(\cite{BoAkAl09}, Lemma 2.4)}
\label{FBIoutBis}
Let $a$ be a positive real, $E$ a measurable subset of $\IR^n$ and $K\subset \IR^n$ a compact set s.t. $\dist(K,E) \geq a$. If $u \in L^2(\IR^n_x)$ is supported in $K$ then
$$\| T_\ep u\|_{L^2(E \x \IR^n_\eta)} = c_n \ep^{-\nq } \|\Char{E}(y) u(x) e^{-(x-y)^2/(2\ep)}\|_{L^2_{y,x}} \lesssim e^{-a^2/(4\ep)} \|u\|_{L^2_x}.$$
\end{lemma}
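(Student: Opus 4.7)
The proof naturally splits into two steps matching the two relations in the conclusion.

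\textbf{Step 1: the equality.} For fixed $y$, the map $\eta \mapsto T_\ep u(y,\eta)$ is essentially a Fourier transform. Indeed, pulling the $y$-dependent factor out, one may write
\begin{equation*}
T_\ep u(y,\eta) = c_n \ep^{-\tnq} e^{i\eta \cdot y/\ep} \Fo_x\!\left[u(x)\, e^{-(y-x)^2/(2\ep)}\right](\eta/\ep).
\end{equation*}
The plan is to apply Plancherel's theorem in $\eta$: the change of variable $\xi = \eta/\ep$ absorbs one factor of $\ep^n$, and Plancherel (with the convention from the paper) converts the $L^2_\xi$ norm back to an $L^2_x$ norm of $u(x) e^{-(y-x)^2/(2\ep)}$. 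The $\ep$-powers collapse to exactly $\ep^{-\nd/2}$ per $y$, which after squaring and taking the square root gives the claimed prefactor $c_n \ep^{-\nq}$ (the $(2\pi)^{n/2}$ coming from Plancherel being absorbed in the definition of $c_n$, consistent with the isometry $T_\ep^* T_\ep = Id$). Integrating over $y \in E$ and inserting $\Char{E}(y)$ yields the stated identity.

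\textbf{Step 2: the exponential decay.} Once the equality is in hand, what must be bounded is
\begin{equation*}
\|\Char{E}(y) u(x) e^{-(x-y)^2/(2\ep)}\|_{L^2_{y,x}}^2 = \int_E \int_K |u(x)|^2 e^{-(y-x)^2/\ep} \, dx\, dy,
\end{equation*}
since $u$ is supported in $K$. The trick is to split the Gaussian symmetrically:
\begin{equation*}
e^{-(y-x)^2/\ep} = e^{-(y-x)^2/(2\ep)} \cdot e^{-(y-x)^2/(2\ep)}.
\end{equation*}
For $x \in K$ and $y \in E$ the hypothesis $\dist(K,E) \geq a$ forces $|y-x| \geq a$, so one factor is uniformly bounded by $e^{-a^2/(2\ep)}$. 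The other factor is integrated freely over $y \in \IR^n$, giving $\int_{\IR^n} e^{-(y-x)^2/(2\ep)} \, dy = (2\pi\ep)^{\nd/2}$, which exactly cancels the $\ep^{-\nd/2}$ from the prefactor squared. What remains is $C\, e^{-a^2/(2\ep)} \|u\|_{L^2_x}^2$, and taking square roots gives the desired $\lesssim e^{-a^2/(4\ep)} \|u\|_{L^2}$ bound.

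\textbf{Main obstacle.} There is essentially none beyond bookkeeping: the result is a soft consequence of Plancherel plus a Gaussian off-diagonal estimate. The only point requiring care is tracking the $\ep$-powers and the normalization constant $c_n$ through the Fourier scaling $\xi = \eta/\ep$ so that the equality in the statement is reproduced with the right coefficient; after that, the exponential decay falls out of the elementary inequality $(y-x)^2 \geq a^2$ on the support of the integrand.
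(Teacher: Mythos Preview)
Your proof is correct and follows essentially the same approach the paper sketches: recognize $T_\ep u(y,\cdot)$ as a rescaled Fourier transform of $u(x)e^{-(y-x)^2/(2\ep)}$ and apply Parseval in $\eta$ to obtain the equality, then use the Gaussian off-diagonal bound $|x-y|\geq a$ together with the factorization $e^{-(y-x)^2/\ep}=e^{-(y-x)^2/(2\ep)}\cdot e^{-(y-x)^2/(2\ep)}$ to get the exponential decay. The paper's one-line proof (``write the FBI transform as a Fourier transform of an auxiliary function and use Parseval'') is exactly your Step~1, and your Step~2 fills in the straightforward estimate the paper leaves implicit.
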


\begin{proof}
The proof consists of writing the FBI transform as the Fourier transform w.r.t. $x$ of some auxiliary function and using Parseval equality.
\end{proof}

\begin{lemma}
\label{cutx}
Let $\theta$ be a cut-off of $\Co_0^\infty(\IR^n_\eta,\IR)$, $E$ a measurable subset of $\IR^n$ and $K\subset \IR^n$ a compact set s.t. $\dist(K,E) >0$. If $u \in L^2(\IR^n)$ is supported in $K$ then
$$ \nordvois{T_\ep^* \theta(\eta) T_\ep u}{E} \lesssim e^{-C/\ep} \nordvois{u}{\IR^n}.$$
\end{lemma}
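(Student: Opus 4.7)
My approach is to compute the Schwartz kernel of the composed operator $T_\ep^* \theta(\eta) T_\ep$ explicitly, exploit the compactness of $\supp\theta$ (which makes $\mathcal{F}\theta$ rapidly decreasing and bounded), and then extract the exponential smallness from the geometric separation condition $\dist(K,E)\geq a>0$ via a convolution estimate.

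\textbf{Step 1: reducing to a kernel.} Writing out the definitions \eqref{FBI1Bis} and that of $T_\ep^*$, one has
\begin{equation*}
T_\ep^*\theta(\eta) T_\ep u(x)= c_n^2\,\ep^{-\tnd}\int_{\IR^{3n}} u(x')\,\theta(\eta)\, e^{i\eta\cdot(x-x')/\ep}\,e^{-(x-y)^2/(2\ep)-(y-x')^2/(2\ep)}\,dy\,d\eta\,dx'.
\end{equation*}
The Gaussian in $y$ factorises via $(x-y)^2+(y-x')^2=2(y-(x+x')/2)^2+(x-x')^2/2$, so the $y$-integral yields $(\pi\ep)^{n/2}\,e^{-(x-x')^2/(4\ep)}$. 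The $\eta$-integral is then the Fourier transform of $\theta$ evaluated at $-(x-x')/\ep$; calling $\Theta=\mathcal{F}\theta\in\So(\IR^n)$ one obtains after gathering the constants
\begin{equation*}
T_\ep^*\theta(\eta) T_\ep u(x)= (2\pi)^{-n}\,\ep^{-n}\int_{\IR^n} u(x')\,\Theta(-(x-x')/\ep)\,e^{-(x-x')^2/(4\ep)}\,dx'.
\end{equation*}

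\textbf{Step 2: exploiting the separation.} For $x\in E$ and $x'\in\supp u\subset K$ we have $|x-x'|\geq a$. Splitting $e^{-(x-x')^2/(4\ep)}=e^{-(x-x')^2/(8\ep)}\cdot e^{-(x-x')^2/(8\ep)}$ and bounding the first factor by $e^{-a^2/(8\ep)}$ and $|\Theta|$ by $\|\Theta\|_\infty$ (which is finite because $\theta\in\Co_0^\infty$), we obtain for $x\in E$
\begin{equation*}
|T_\ep^*\theta(\eta) T_\ep u(x)|\;\lesssim\; \ep^{-n}\,e^{-a^2/(8\ep)}\int_{K}|u(x')|\,e^{-(x-x')^2/(8\ep)}\,dx'.
\end{equation*}

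\textbf{Step 3: convolution estimate.} The right-hand side is a convolution of $|u|$ (extended by zero) with the Gaussian $g_\ep(x)=e^{-x^2/(8\ep)}$, whose $L^1$-norm is $(8\pi\ep)^{n/2}$. Young's inequality $\|g_\ep* |u|\|_{L^2(E)}\leq \|g_\ep\|_{L^1}\|u\|_{L^2}$ yields
\begin{equation*}
\nordvois{T_\ep^*\theta(\eta) T_\ep u}{E}\;\lesssim\; \ep^{-n/2}\,e^{-a^2/(8\ep)}\,\nordvois{u}{\IR^n}.
\end{equation*}
Since for any $C<a^2/8$ one has $\ep^{-n/2}e^{-a^2/(8\ep)}\lesssim e^{-C/\ep}$ as $\ep\to 0$, the claimed estimate follows.

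\textbf{Main obstacle.} There is no conceptual difficulty here; the only things to be careful about are the bookkeeping of the constants $c_n$, $\pi^{-n}$ and the powers of $\ep$ when collapsing the three successive Gaussian integrations, and making sure that the splitting of $e^{-(x-x')^2/(4\ep)}$ leaves enough Gaussian mass for Young's inequality to compensate the $\ep^{-n}$ factor in the kernel.
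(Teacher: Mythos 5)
Your proof is correct and takes essentially the same route as the paper: both reduce the claim to the explicit kernel $(2\pi)^{-n}\ep^{-n}\,\Fo\theta\big((x-x')/\ep\big)\,e^{-(x-x')^2/(4\ep)}$ of $\Char{E}\,T_\ep^*\theta(\eta)T_\ep\,\Char{K}$ and then extract $e^{-C/\ep}$ from the separation $\dist(K,E)>0$. The only (cosmetic) difference is in the last step: you bound $\Fo\theta$ in $L^\infty$, split off the Gaussian decay, and apply Young's convolution inequality, whereas the paper bounds the row and column $L^1$-norms of the kernel via Cauchy--Schwartz (using $\|\Fo\theta\|_{L^2}$) and invokes Schur's Lemma --- interchangeable arguments for a convolution-type kernel, yielding the same conclusion up to the value of the constant $C$.
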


\begin{proof}
The kernel of $ \Char{E} T_\ep^* \theta(\eta) T_\ep \Char{K} : L^2(\IR^n_x) \mapsto L^2(\IR^n_{w})$ is 
\begin{equation*}\begin{split}
k_\ep(w,x) &= \ep^{-\tnd}c_n^2 \Char{E}(w) \Char{K}(x)\int_{\IR^{2n}} \theta(\eta)e^{i\eta\cdot(w-x)/\ep -(y-x)^2/(2\ep) -(w-y)^2/(2\ep)} dy d\eta \\
&= \Char{E}(w) \Char{K}(x) \ep^{-n}(2\pi)^{-n} \Fo\theta(\frac{x-w}{\ep}) e^{ -(x-w)^2/(4\ep) }.
\end{split}\end{equation*}
For $w\in \IR^n$, one has by Cauchy-Schwartz inequality
\begin{equation*}\begin{split}
\int_{\IR^n} |k_\ep(w,x)| dx &\leq \nordvois{\Fo\theta}{\IR^n} (2\pi)^{-n}\ep^{-\nd} \Par{\int_{\IR^n} \Char{E}(w) \Char{K}(x) e^{-(x-w)^2/(2\ep)} dx}^{\ud} \\
&\lesssim e^{-C/\ep}. 
\end{split}\end{equation*}
Similarly, $\int_{\IR^n} |k_\ep(w,x)| dw$ is dominated by $e^{-C/\ep}$, so one gets by Schur's Lemma
\begin{equation*}\begin{split}
 \nordvois{T_\ep^* \theta(\eta) T_\ep u}{E_{w}}  \lesssim e^{-C/\ep} \nordvois{u}{\IR^n_x}.
\end{split}\end{equation*}
\end{proof}

\begin{lemma}
\label{cuteta}
Let $E$ be a measurable subset of $\IR^n$ and $K\subset \IR^n$ a compact set s.t. $\dist(K,E) >0$.
If $\theta$ is a cut-off of $\Co_0^\infty(\IR^n_{\eta},\IR)$ supported in $K$ then
$$ \|T_\ep T_\ep^* \theta(\eta) T_\ep \|_{L^2(\IR^n) \rightarrow L^2( \IR^n \x E )} \lesssim e^{-C/\ep} .$$
\end{lemma}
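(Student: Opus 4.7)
The plan is to derive an explicit Gaussian integral kernel for the phase-space operator $T_\ep T_\ep^*$, and then to exploit the separation between $\supp \theta \subset K$ and $E$ in the $\eta$-direction to extract an $e^{-C/\ep}$ factor. Setting $g = \theta(\eta) T_\ep u$, the isometry of $T_\ep$ and the boundedness of $\theta$ give $\|g\|_{L^2(\IR^{2n})} \leq \|\theta\|_{L^\infty} \|u\|_{L^2(\IR^n)}$, and $g$ is supported in $\IR^n_y \times K$. A direct computation -- composing the definitions of $T_\ep$ and $T_\ep^*$ and completing the squares in the intermediate $x$-variable appearing in the two Gaussians $e^{-(y-x)^2/(2\ep)}$ and $e^{-(x-y')^2/(2\ep)}$ -- yields the explicit kernel
\begin{equation*}
(T_\ep T_\ep^* g)(y,\eta) = c_n^2 \pi^{\nd} \ep^{-n} \int_{\IR^{2n}} g(y',\eta') e^{i(\eta + \eta')\cdot(y-y')/(2\ep)} e^{-(y-y')^2/(4\ep) - (\eta-\eta')^2/(4\ep)} dy' d\eta' ,
\end{equation*}
so that, in modulus, $T_\ep T_\ep^*$ acts on $L^2(\IR^{2n})$ as a Gaussian convolution on phase space of width $\sqrt \ep$.

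Next, for $\eta \in E$ and $\eta' \in K$ one has $|\eta-\eta'| \geq a := \dist(K,E) > 0$. I would split $e^{-(\eta-\eta')^2/(4\ep)} = e^{-(\eta-\eta')^2/(8\ep)} \cdot e^{-(\eta-\eta')^2/(8\ep)}$ and bound the second factor by $e^{-a^2/(8\ep)}$ on the support of $g$. Taking moduli inside the integral, this pulls an $e^{-C/\ep}$ prefactor out of the pointwise estimate for $|(T_\ep T_\ep^* g)(y,\eta)|$ when $\eta \in E$, leaving a residual positive kernel $\ep^{-n} e^{-(y-y')^2/(4\ep) - (\eta-\eta')^2/(8\ep)}$.

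Finally, Schur's lemma applied to this residual kernel yields a uniform $L^2_{y,\eta} \to L^2_{y,\eta}$ bound of order $1$, since both Schur integrals $\int \ep^{-n} e^{-(y-y')^2/(4\ep)-(\eta-\eta')^2/(8\ep)} \, dy' d\eta'$ and its analogue in $dy\, d\eta$ are constants in $\ep$ (the Gaussian integration produces an $\ep^n$ that cancels the $\ep^{-n}$ prefactor). Combining with $\|g\|_{L^2_{y,\eta}} \leq \|u\|_{L^2}$ yields the announced estimate. There is no essential obstacle here: the separation between $K$ and $E$ already lies in the $\eta$-direction, in which the kernel of $T_\ep T_\ep^*$ carries Gaussian decay of width $\sqrt \ep$, so extracting an $e^{-C/\ep}$ factor is immediate; only bookkeeping of the powers of $\ep$ is required.
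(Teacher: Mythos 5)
Your proposal is correct and follows essentially the same route as the paper: you compute the same explicit Gaussian kernel $c_n^2\pi^{\nd}\ep^{-n}e^{i(\eta+\eta')\cdot(y-y')/(2\ep)}e^{-(y-y')^2/(4\ep)-(\eta-\eta')^2/(4\ep)}$ for $T_\ep T_\ep^*$ restricted by $\Char{E}$ and the support of $\theta$, and conclude by Schur's lemma together with the isometry of $T_\ep$. The only cosmetic difference is that you factor out $e^{-a^2/(8\ep)}$ before applying Schur to an $O(1)$ residual kernel, whereas the paper folds the exponential decay directly into the two Schur integrals of the full kernel $h_\ep$; the estimates are identical.
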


\begin{proof}
Consider the operator $H_\ep :L^2(\IR^{2n}_{y,\eta}) \rightarrow  L^2(\IR^{2n}_{x,\xi})$ defined by
$$H_\ep f (x,\xi)=\Char{E}(\xi) T_\ep T_\ep^* \Par{\Char{K}(\eta)  f(y,\eta)}(x,\xi).$$
It is easy to compute its kernel $h_\ep$
\begin{equation*}\begin{split}
h_\ep(x,\xi,y,\eta) &= c_n^2 \pi^{\nd} \ep^{-n} \Char{E}(\xi) \Char{K}(\eta) e^{i(\xi+\eta)\cdot(x-y)/(2\ep) -(x-y)^2/(4\ep) -(\xi-\eta)^2/(4\ep)}. 
\end{split}\end{equation*}
Hence, $\int_{\IR^{2n}} |h_\ep(x,\xi,y,\eta)| dx d\xi \lesssim e^{-C/\ep}$ and $\int_{\IR^{2n}} |h_\ep(x,\xi,y,\eta)| dy d\eta \lesssim e^{-C/\ep}$.
For $u \in L^2(\IR^n)$ , it follows by Schur's Lemma that
\begin{align*}
 \nordvois{H_\ep T_\ep u}{\IR^{2n}_{x,\xi}} = \nordvois{T_\ep T_\ep^* \theta(\eta) T_\ep u}{ \IR^n_x \x E_{\xi}} &\lesssim e^{-C/\ep} \nordvois{T_\ep u}{\IR^{2n}_{y,\eta}}\\
 & \lesssim e^{-C/\ep} \nordvois{u}{\IR^n}.
\end{align*}
\end{proof}

\begin{lemma}\emph{(\cite{BoAkAl09}, Lemma 3.4)}
\label{estimu}
$\nordvois{ \ep^{-1} T_\ep u_{\ep,\indx}^I}{\IR^{2n}} \lesssim 1$.
\end{lemma}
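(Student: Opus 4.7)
The strategy is to exploit two facts: the FBI transform $T_\ep$ is an isometry $L^2(\IR^n)\to L^2(\IR^{2n})$, and the cut-off $\gamma_{\indx}$ is supported away from the zero frequency (in $\{|\eta|\geq 2 r_0\}$ by \eqref{cutet}), while the zero-extension $\uuepI$ lies uniformly in $H^1(\IR^n)$ thanks to \eqref{CIboundBis} and \eqref{CIcompBis}. The interplay of these two facts will produce an extra factor $\ep^2$ which exactly compensates the $\ep^{-1}$ in front of $T_\ep u_{\ep,\indx}^I$.

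Concretely, I would first reduce the claim to an $L^2$ estimate on $u_{\ep,\indx}^I$ itself: since $T_\ep$ is an isometry and $T_\ep^*$ has operator norm $\leq 1$ on $L^2$ (because $T_\ep T_\ep^*$ is the orthogonal projection onto $T_\ep(L^2)$), one has
\[
\|\ep^{-1} T_\ep u_{\ep,\indx}^I\|_{L^2(\IR^{2n})} = \ep^{-1}\|u_{\ep,\indx}^I\|_{L^2(\IR^n)} \leq \ep^{-1}\|\gamma_{\indx}(\eta)\, T_\ep \uuepI\|_{L^2(\IR^{2n})}.
\]
So it suffices to prove $\|\gamma_{\indx}(\eta) T_\ep \uuepI\|_{L^2(\IR^{2n})}\lesssim \ep$.

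The main computation uses Lemma \ref{FBI2} and Plancherel's theorem in the $y$-variable to express
\[
\int_{\IR^n}|T_\ep \uuepI(y,\eta)|^2\, dy = c_n^2\,\ep^{-\nd}\int_{\IR^n}|\Fo \uuepI(\xi)|^2 e^{-(\eta-\ep\xi)^2/\ep}\, d\xi.
\]
Multiplying by $|\gamma_{\indx}(\eta)|^2$, integrating over $\eta$ (confined to $|\eta|\geq 2 r_0$), and swapping integrals, I would split the $\xi$-integral into two regimes. For $|\ep\xi|\geq r_0$, the change of variable $\omega=(\eta-\ep\xi)/\sqrt\ep$ shows the inner $\eta$-integral is bounded by $\pi^{\nd}\ep^{\nd}$, cancelling the $\ep^{-\nd}$; then the trivial bound $|\Fo\uuepI(\xi)|^2\leq (\ep|\xi|/r_0)^2|\Fo\uuepI(\xi)|^2$ combined with the uniform $H^1$ estimate $\int(1+|\xi|^2)|\Fo\uuepI(\xi)|^2 d\xi \lesssim 1$ yields a contribution $\lesssim \ep^2/r_0^2$. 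For $|\ep\xi|\leq r_0$, the constraint $|\eta|\geq 2 r_0$ forces $|\eta-\ep\xi|\geq r_0$, so the Gaussian $e^{-(\eta-\ep\xi)^2/\ep}$ produces, after integration in $\eta$, an exponentially small factor $\lesssim \ep^{\nd} e^{-r_0^2/(2\ep)}$; combined with the uniform $L^2$ bound on $\uuepI$, this contribution is $O(\ep^\infty)$. Summing the two regimes gives $\|\gamma_{\indx}(\eta) T_\ep \uuepI\|_{L^2}^2\lesssim \ep^2$, which is the desired bound.

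I do not expect a serious obstacle: the proof is an elementary application of Lemma \ref{FBI2} together with the Plancherel identity and the frequency localization of $\gamma_{\indx}$. The only delicate point worth verifying is that the extension $\uuepI=\Char{\Omega}u_\ep^I$ genuinely belongs to $H^1(\IR^n)$ uniformly in $\ep$; this is where \eqref{CIcompBis} is crucial, since it guarantees that $u_\ep^I$ vanishes in a fixed neighborhood of $\dom$, so the zero-extension creates no jump across the boundary and $\|\uuepI\|_{H^1(\IR^n)}=\|u_\ep^I\|_{H^1(\Omega)}\lesssim 1$.
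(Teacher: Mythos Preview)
Your argument is correct and, in fact, cleaner than the paper's own proof. The paper proceeds in two stages: it differentiates the FBI kernel in $y_j$, uses the commutation $\dpp_{y_j}T_\ep=T_\ep\dpp_{x_j}$ together with the uniform $H^1$ bound on $u_{\ep,\indx}^I$ to control $\|\eta_j\,\ep^{-1/2}T_\ep u_{\ep,\indx}^I\|_{L^2}$, then invokes \eqref{CIsmooth2} to strip off the factor $\eta_j$ and deduce $\|u_{\ep,\indx}^I\|_{L^2}\lesssim\sqrt\ep$; the whole argument is then repeated once more (a bootstrap) to upgrade $\sqrt\ep$ to $\ep$. Your approach bypasses the bootstrap entirely: you go straight to the Fourier side via Lemma~\ref{FBI2}, apply Plancherel in $y$, and split according to whether $|\ep\xi|$ is above or below $r_0$; the low-frequency piece is killed exponentially by the Gaussian separation between $\supp\gamma_{\indx}$ and $\{|\eta|<r_0\}$, while the high-frequency piece picks up $\ep^2$ from the uniform $H^1$ bound on $\uuepI$. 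The key extra observation you make---and which the paper does not use at this point---is that \eqref{CIcompBis} guarantees $\uuepI\in H^1(\IR^n)$ uniformly, so you can work directly with $\uuepI$ rather than with $u_{\ep,\indx}^I$. This is what lets you avoid the two-step iteration.
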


\begin{proof} 
Differentiating \eqref{FBI1Bis} w.r.t. $y_j$, $0\leq j \leq n$, yields
\begin{equation*}
\begin{split}
\ep^{\ud} \dpp_{y_j} (T_\ep u_{\ep,\indx}^I) = i \eta_j \ep^{-\ud} T_\ep u_{\ep,\indx}^I -c_n\ep^{-\tnq}\int_{\IR^n}&u^I_{\ep,\indx}(x)\ep^{-\ud}(y_j-x_j) \\
&e^{i \eta \cdot(y-x)/\ep- (y-x)^2/(2\ep)}dx.
\end{split}
\end{equation*}
The l.h.s. is bounded in $L^2_{y,\eta}$ because $\dpp_{y_j} (T_\ep u_{\ep,\indx}^I) = T_\ep (\dpp_{x_j} u_{\ep,\indx}^I)$.
The second term of the r.h.s. is the Fourier transform of a bounded function in $L^2_x$, thus it can be estimated using Parseval equality. One gets
\begin{equation*}
\nordye{\ep^{-\tnq}\int_{\IR^n}u^I_{\ep,\indx}(x)\ep^{-\ud}(y_j-x_j) \,e^{i \eta\cdot(y-x)/\ep- (y-x)^2/(2\ep)}dx} \lesssim \|u_{\ep,\indx}^I\|_{L^2_x}.
\end{equation*}
Thus $\nordye{\ep^{-\ud} \eta_j T_\ep u_{\ep,\indx}^I} \lesssim 1$ and consequently by $\eqref{CIsmooth2}$
\begin{equation*}
	\nordye{\ep^{-\ud} T_\ep u_{\ep,\indx}^I} \lesssim 1.
\end{equation*}
Hence $\nord{u_{\ep,\indx}^I} \lesssim \sqrt \ep$. 
Reproducing the same arguments on the equality
\begin{equation*}
\begin{split}
 \dpp_{y_j} (T_\ep u_{\ep,\indx}^I) = i \eta_j \ep^{-1} T_\ep u_{\ep,\indx}^I -c_n\ep^{-\tnq}\int_{\IR^n}&\Par{\ep^{-\ud}u^I_{\ep,\indx}}(x) \ep^{-\ud}(y_j-x_j) \\
 &e^{i \eta \cdot(y-x)/\ep-(y-x)^2/(2 \ep)}dx
 \end{split}
\end{equation*}
leads to $\nordvois{u_{\ep,\indx}^I}{\IR^n} \lesssim  \ep$.
\end{proof}

\begin{lemma}
\label{modDWig}
Let $(a_\ep)$ and $(b_\ep)$ be two sequences uniformly bounded in $L^2(\IR^n)$ and $H^1(\IR^n)$ respectively. If $\ep^{-1} b_\ep$ is uniformly bounded in $L^2(\IR^n)$, then
$$w_\ep(a_\ep,|D|b_\ep) \approx |\xi| w_\ep(a_\ep,\ep^{-1}b_\ep) \text{ on } \IR^n \x (\IR^n\prive{0}).$$
\end{lemma}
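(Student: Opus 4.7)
My approach is to test the difference against an arbitrary $\phi \in \Co_0^\infty(\IR^n \x (\IR^n \prive{0}), \IR)$ and show that $<w_\ep(a_\ep, |D|b_\ep) - |\xi| w_\ep(a_\ep, \ep^{-1} b_\ep), \phi> = O(\ep)$. I would first use the Fourier representations $|D|b_\ep(z) = (2\pi)^{-n}\int e^{iz\cdot\zeta}|\zeta|\widehat{b_\ep}(\zeta)\,d\zeta$ and the analogous expansion of $a_\ep$, substitute them into the definition of $w_\ep$, and carry out the $v$-integration. This produces a Dirac mass imposing $\xi = \ep(\eta+\zeta)/2$. Integrating $x$ against $\phi$ then yields $\widehat\phi$ in its first variable, so that
\begin{equation*}
<w_\ep(a_\ep,|D|b_\ep),\phi> = (2\pi)^{-2n}\int \widehat\phi\Par{\zeta-\eta, \tfrac{\ep(\eta+\zeta)}{2}}\widehat{a_\ep}(\eta)|\zeta|\overline{\widehat{b_\ep}(\zeta)}\,d\eta\,d\zeta,
\end{equation*}
and similarly $<|\xi|w_\ep(a_\ep,\ep^{-1}b_\ep),\phi>$ is given by the same formula with $|\zeta|$ replaced by $\Modul{(\eta+\zeta)/2}$ — this is precisely what $|\xi|/\ep$ evaluates to on the support of the delta.

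Subtracting and making the symplectic change of variables $p=(\eta+\zeta)/2$, $q=(\eta-\zeta)/2$, the difference reduces up to a constant to
\begin{equation*}
\int \widehat\phi(-2q,\ep p)\,\widehat{a_\ep}(p+q)\Par{|p-q|-|p|}\overline{\widehat{b_\ep}(p-q)}\,dp\,dq.
\end{equation*}
The reverse triangle inequality gives $\Modul{|p-q|-|p|} \leq |q|$, and Cauchy-Schwarz in $(p,q)$ yields the bound
\begin{equation*}
\Par{\int |\widehat\phi(-2q,\ep p)||\widehat{a_\ep}(p+q)|^2\,dp\,dq}^{\ud}\Par{\int |q|^2|\widehat\phi(-2q,\ep p)||\widehat{b_\ep}(p-q)|^2\,dp\,dq}^{\ud}.
\end{equation*}

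In the first factor I would substitute $u=p+q$ and in the second $w=p-q$. Since $\widehat\phi$ is Schwartz in its first argument (as the Fourier transform in $x$ of a function compactly supported in $x$), both inner $q$-integrals $\int |\widehat\phi(-2q,\ep(u-q))|\,dq$ and $\int |q|^2|\widehat\phi(-2q,\ep(w+q))|\,dq$ are bounded uniformly in $u,w$ and $\ep$. Consequently the first factor is $\lesssim \|a_\ep\|_{L^2} \lesssim 1$ and the second is $\lesssim \|b_\ep\|_{L^2}$; by the hypothesis $\|\ep^{-1}b_\ep\|_{L^2} \lesssim 1$ the latter is $O(\ep)$, so the whole pairing is $O(\ep)$. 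The main technical obstacle is the rigorous handling of the intermediate Dirac-mass identities, which I would justify either by first approximating $a_\ep,b_\ep$ by Schwartz sequences or by interpreting each step via Plancherel. The structural point is that the $\ep$-smallness of $b_\ep$ forced by the hypothesis exactly compensates the $\ep^{-1}$ that appears when $|D|$ is viewed semiclassically as $\ep^{-1}(\ep|D|)$, so the natural leading-order identity $w_\ep(a_\ep, \ep|D|b_\ep) \approx |\xi|\,w_\ep(a_\ep,b_\ep)$ survives division by $\ep$.
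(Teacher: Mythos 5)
Your proof is correct, and it organizes the error differently from the paper's own argument. The paper stays on the physical side: it writes $<w_\ep(a_\ep,c_\ep),\phi>=(2\pi)^{-n}\int \Fo_\xi\phi(x-\frac{\ep}{2}v,v)\,a_\ep(x)\,\bar c_\ep(x-\ep v)\,dv\,dx$ with $c_\ep=|D|b_\ep$, replaces $\Fo_\xi\phi(x-\frac{\ep}{2}v,v)$ by $\Fo_\xi\phi(x,v)$ at cost $O(\ep)\nord{a_\ep}\nord{c_\ep}$ (rapid decay in $v$ plus Cauchy--Schwarz), integrates out $v$ to reach the exact diagonal expression $(2\pi)^{-n}\ep^{-n}\int\phi(x,\xi)e^{-ix\cdot\xi/\ep}a_\ep(x)\overline{\Fo c_\ep}(\xi/\ep)\,dx\,d\xi$, and finishes with the exact identity $\Fo(|D|b_\ep)(\xi/\ep)=\ep^{-1}|\xi|\,\Fo b_\ep(\xi/\ep)$; thus the entire $O(\ep)$ error is charged to a shift in the test function, and the multiplier step is exact. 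You instead keep the pairing exact in the double-Fourier representation, where $\widehat\phi$ is evaluated at the midpoint frequency $\ep(\eta+\zeta)/2$, and charge the error to the symbol comparison $\Modul{|\zeta|-|(\eta+\zeta)/2|}\le|\eta-\zeta|/2$, absorbed by the decay of $\widehat\phi$ in its first variable; on the Fourier side the paper's step amounts to moving the second argument of $\widehat\phi$ from the midpoint to $\ep\zeta$, so the two proofs are dual bookkeepings of the same first-order discrepancy. What your route buys: a single bilinear estimate directly on the difference with an explicit $O(\ep)$ rate, hypotheses used exactly where expected ($\nord{b_\ep}\lesssim\ep$ for smallness, the $H^1$ bound only to put $|D|b_\ep$ in $L^2$), and a conclusion that in fact holds on all of $\IR^n\x\IR^n$ --- your Fourier transform acts only in $x$, so you never need $|\xi|\phi$ to be smooth in $\xi$, whereas the paper's shift estimate applied to the test function $|\xi|\phi$ is where the support condition away from $\xi=0$ enters. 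Your plan to legitimize the Dirac-mass manipulations by Plancherel/density is the right fix (both sides of your exact formula are continuous bilinear forms on $L^2\x L^2$, by the same Schwartz-decay bounds, so checking on Schwartz functions suffices) and is more explicit than the paper, which leaves this implicit. Two micro-quibbles: rapid decay of $\widehat\phi(\cdot,\xi)$ uniformly in $\xi$ uses smoothness and compact support of $\phi$ jointly (compact support alone gives no decay), and you should record that the right-hand pairing means testing $w_\ep(a_\ep,\ep^{-1}b_\ep)$ against $|\xi|\phi$, admissible since $|\xi|$ is bounded on $\supp\phi$ --- both harmless given your standing assumptions.
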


\begin{proof} 
Let $\phi$ be a test function in $\Co_0^\infty(\IR^n \x (\IR^n\prive{0}),\IR)$ and denote $c_\ep = |D| b_\ep$.
We use another expression of $<w_\ep(a_\ep,c_\ep),\phi>$ exhibiting the Fourier transform of $c_\ep$:
\begin{equation*}
<w_\ep(a_\ep,c_\ep),\phi> = (2 \pi)^{-n} \int_{\IR^{2n}} \Fo_\xi\phi(x-\frac{\ep}{2}v,v) a_\ep(x) \bar c_\ep(x-{\ep}v) dv dx.
\end{equation*}
Since $\Fo_\xi\phi$ is rapidly decreasing
$$\underset{x}{\sup}\Modul{\Fo_\xi\phi(x-\frac{\ep}{2}v,v) - \Fo_\xi\phi(x,v)} \lesssim \ep (1+v^2)^{-n-1}.$$
By Cauchy-Schwartz inequality w.r.t. $dx$
$$ \int_{\IR^{2n}} |\Par{\Fo_\xi\phi(x-\frac{\ep}{2}v,v) - \Fo_\xi\phi(x,v)} a_\ep(x) \bar c_\ep(x-{\ep}v)| dv dx \lesssim \ep \nord{a_\ep} \nord{c_\ep}.$$ 
It follows that
\begin{equation*}
<w_\ep(a_\ep,c_\ep),\phi> = (2 \pi)^{-n} \int_{\IR^{3n}} \phi(x,\xi) e^{-i v \cdot \xi} a_\ep(x) \bar c_\ep(x-{\ep}v) dv dx d\xi + o(1).
\end{equation*}
Integrating w.r.t. $v$ leads to
\begin{equation*}
<w_\ep(a_\ep,c_\ep),\phi> = (2 \pi)^{-n} \ep^{-n} \int_{\IR^{2n}} \phi(x,\xi) e^{-i x \cdot \xi/\ep } a_\ep(x) \overline{\Fo c_\ep}(\xi/\ep)  dx d\xi + o(1),
\end{equation*}
and replacing $\Fo  c_\ep(\xi/\ep) $ by $ \ep^{-1}|\xi|\Fo  b_\ep(\xi/\ep)$ ends the proof.
\end{proof}


\begin{thebibliography}{99}																														\bibitem{Alexandre}
		\newblock R. Alexandre,
		\newblock \textit{Oscillations in {P}{D}{E} with singularities of codimension one. Part
  					{I} : review of the symbolic calculus and basic definitions}, preprint.


\bibitem{ArEn10} 
		\newblock G. Ariel, B. Engquist, N.M. Tanushev and R. Tsai,
		\newblock \textit{Gaussian beam decomposition of high frequency wave fields using expectation-maximization}, preprint.


\bibitem{Babich} 
		\newblock V.M. Babi{\v{c}},
		\newblock \emph{Eigenfunctions concentrated in a neighborhood of a closed geodesic}, (Russian)
		\newblock in ``Math. Problems in Wave Propagation Theory" (V.M. Babich, editor), 
							Zap. Nau{\v{c}}n. Sem. Leningrad. Otdel. Mat. Inst. Steklov., (1968), 15--63.


\bibitem{BoAkAl09} 
    \newblock S. Bougacha, J.-L. Akian and R. Alexandre, 
    \newblock \emph{Gaussian beams summation for the wave equation in a convex domain},
    \newblock Commun. Math. Sci., \textbf{7} (2009), 973--1008.

\bibitem{Burq} 
    \newblock N. Burq,
		\newblock \emph{Contr\^olabilit\'e exacte des ondes dans des ouverts peu
  	r\'eguliers}, (French) [Exact controllability of waves in nonsmooth domains],
		\newblock  Asympt. Anal., \textbf{14} (1997), 157--191.

\bibitem{Bu97} 
    \newblock N. Burq,
    \newblock  \emph{Mesures semi-classiques et mesures de d{\'{e}}faut}, (French) [Semiclassical measures and defect measures],
    \newblock  S{\'{e}}minaire Bourbaki, Ast{\'{e}}risque, (1997), 167--195.

\bibitem{Burq03} 
    \newblock N. Burq,
		\newblock \emph{Quantum ergodicity of boundary values of eigenfunctions: a control
  	theory approach},
		\newblock  Canad. Math. Bull., \textbf{48} (2005), 3--15.

\bibitem{BuLe} 
    \newblock N. Burq and G. Lebeau,
		\newblock \emph{Mesures de d{\'{e}}faut de compacit\'e, application au syst{\`{e}}me de
	  {L}am{\'{e}}}, (French) [Microlocal defect measures and application to the Lame system],
		\newblock Ann. Sci. {\'{E}}cole Norm. Sup., \textbf{34} ( 2001), 817--870.

\bibitem{Castella} 
		\newblock F. Castella,
		\newblock \emph{The radiation condition at infinity for the high frequency
	  {H}elmholtz equation with source term: a wave packet approach},
		\newblock J. Funct. Anal., \textbf{223} (2005), 204--257.
             
\bibitem{CePoPs}
		\newblock V. {\v{C}}erven{\'{y}}, M.M. Popov and I. P{\v{s}}en{\v{c}}{\'{i}}k,
		\newblock \emph{Computation of wave fields in inhomogeneous media - {G}aussian beam
  	approach},
		\newblock Geophys. J. R. Astr. Soc., \textbf{70} (1982), 109--128.

\bibitem{Chazarain} 
    \newblock J. Chazarain,
		\newblock \emph{Param\'etrix du probl\`eme mixte pour l'\'equation des ondes \`a
  						l'int\'erieur d'un domaine convexe pour les bicaract\'eristiques}, (French),
		\newblock in ``Journ\'ees \'Equations aux d\'eriv\'ees partielles de Rennes", Ast{\'{e}}risque, Soc. Math. France, (1976), 165--181.

\bibitem{CoRaRo} 
    \newblock M. Combescure, J. Ralston  and D. Robert,
		\newblock \emph{A proof of the {G}utzwiller semiclassical trace formula using
  coherent states decomposition},
		\newblock Commun. Math. Phys., \textbf{202} (1999), 463--480.

\bibitem{Duyckaerts04} 
		\newblock T. Duyckaerts,
		\newblock \textit{Stabilization of the linear system of magnetoelasticity}, 
		\newblock preprint.


\bibitem{FiMa} 
		\newblock S. Filippas and G.N. Makrakis,
		\newblock \emph{Semiclassical {W}igner function and geometrical optics},
		\newblock Multiscale Model. Simul., \textbf{1} (2003), 674--710.

\bibitem{Fouassier07} 
		\newblock E. Fouassier,
		\newblock \emph{High frequency limit of {H}elmholtz equations: refraction by sharp
  	interfaces},
		\newblock J. Math. Pures Appl., \textbf{87} (2007) 144--192.
		
\bibitem{GaMa} 
		\newblock I. Gasser and P.A. Markowich,
		\newblock \emph{Quantum hydrodynamics, {W}igner transform and the classical limit},
		\newblock Asympt. Anal., \textbf{14} (1997), 97--116.

\bibitem{Gerard91a} 
		\newblock P. G{\'{e}}rard,
		\newblock \emph{Mesures semi-classiques et ondes de {B}loch}, (French) [Semiclassical measures and Bloch waves],
		\newblock in ``S\'eminaire sur les \'Equations aux D\'eriv\'eees Partielles",
					   \'Ecole Polytech., (1991).

					   
\bibitem{Gerard91b} 
		\newblock P. G{\'{e}}rard,
		\newblock \emph{Microlocal defect measures},
		\newblock Commun. Partial Differential Equations, \textbf{16} (1991), 1761--1794.

\bibitem{GeLe} 
		\newblock P. G\'{e}rard and E. Leichtnam,
		\newblock \emph{Ergodic properties of eigenfunctions for the {D}irichlet problem},
		\newblock Duke Math. J., \textbf{71} (1993), 559--607.
		
\bibitem{GeMaMaPo} 
		\newblock P. G{\'{e}}rard, P.A. Markowich, N.J. Mauser and F. Poupaud,
		\newblock \emph{Homogenization limits and {W}igner transforms},
		\newblock Comm. Pure Appl. Math., \textbf{50} (1997), 323--379.

\bibitem{HormanderPDO1} 
		\newblock L. H{\"{o}}rmander,
		\newblock ``The {A}nalysis of {L}inear {P}artial {D}ifferential {O}perators
  						I. {D}istribution {T}heory and {F}ourier {A}nalysis",
		\newblock Springer-Verlag, Berlin, 1983.

\bibitem{Ivrii} 
		\newblock V. Ivrii,
		\newblock ``Microlocal {A}nalysis and {P}recise {S}pectral {A}symptotics",
		\newblock Springer Monographs in Mathematics,
		\newblock Springer Verlag, Berlin, 1998.

\bibitem{KaKuLa} 
		\newblock  A. Katchalov, Y. Kurylev and M. Lassas,
		\newblock  ``Inverse {B}oundary {S}pectral {P}roblems", 
		\newblock Chapman \& Hall/CRC Monographs and Surveys in Pure and Applied Mathematics, 123,
		\newblock Chapman \& Hall/CRC, Boca Raton, 2001.

\bibitem{KaPo81}
		\newblock A.P. Katchalov and M.M. Popov,
		\newblock \emph{The application of the {G}aussian beam summation method to the
  						computation of high-frequency wave fields},
		\newblock Dokl. Akad. Nauk, \textbf{258} (1981), 1097--1100.

\bibitem{Klimes84}
		\newblock  L. Klime{\v{s}},
		\newblock \emph{Expansion of a high-frequency time-harmonic wavefield given on an
  						initial surface into {G}aussian beams},
		\newblock Geophys. J. R. astr. Soc., \textbf{79} (1984), 105--118.

\bibitem{LaSi} 
		\newblock A. Laptev and I.M. Sigal,
		\newblock \emph{Global {F}ourier integral operators and semiclassical asymptotics},
		\newblock Rev. Math. Phys., \textbf{12} (2000), 749--766.

\bibitem{LeQi2009} 
		\newblock S. Leung and J. Qian,
		\newblock \emph{Eulerian {G}aussian beams for {S}chr{\"{o}}dinger equations in the semi-classical regime},
		\newblock J. Comput. Phys., \textbf{228} (2009), 2951--2977.

\bibitem{LeQi2010}
		\newblock S. Leung and J. Qian,
		\newblock \emph{The backward phase flow and {F}{B}{I}-transform-based {E}ulerian {G}aussian beams for the {S}chr{\"{o}}dinger equation},
		\newblock J. Comput. Phys., \textbf{229} (2010), 8888--8917.

\bibitem{LiPa} 
		\newblock P.-L. Lions and T. Paul,
		\newblock \emph{Sur les mesures de {W}igner}, (French) [On Wigner measures],
		\newblock Rev. Mat. Iberoamericana, \textbf{9} (1993), 553--618.


\bibitem{LiRaAcou} 
		\newblock H. Liu and J. Ralston,
		\newblock \emph{Recovery of high frequency wave fields for the acoustic wave equation},
		\newblock Multiscale Model. Simul., \textbf{8} (2009/10), 428--444.

\bibitem{LiRuTa2010}
		\newblock  H. Liu, O. Runborg and N.M. Tanushev,
		\newblock \emph{Error {E}stimates for {G}aussian {B}eam {S}uperpositions},
		\newblock preprint.


\bibitem{MaZu} 
		\newblock F. Maci{\`{a}} and E. Zuazua,
		\newblock \emph{On the lack of observability for wave equations: a {G}aussian beam
  						approach},
		\newblock Asymptot. Anal., \textbf{32} (2002), 1--26.

\bibitem{MaMa} 
		\newblock P.A. Markowich and N.J. Mauser,
		\newblock \emph{The classical limit of a self-consistent {Q}uantum-{V}lasov equation
  						in $3${D}},
		\newblock Math. Models Methods Appl. Sci., \textbf{3} (1993), 109--124.

\bibitem{MaMaPo} 
		\newblock P.A. Markowich, N.J. Mauser and F. Poupaud,
		\newblock \emph{A {W}igner-function approach to (semi)classical limits: electrons in
  						a periodic potential},
		\newblock J. Math. Phys., \textbf{35} (1994), 1066--1094.

\bibitem{MaPiPo}
		\newblock  P.A. Markowich, P. Pietra and C. Pohl.,
		\newblock \emph{Weak limits of finite difference schemes of {S}chr{\"{o}}dinger-type
						  equations},
		\newblock Pubbl. Ian, \textbf{1035} (1997), 1--57.

\bibitem{Martinez} 
		\newblock A. Martinez,
		\newblock "An {I}ntroduction to {S}emiclassical and {M}icrolocal	{A}nalysis",
		\newblock Springer-Verlag, New York, 2002.

\bibitem{Miller} 
		\newblock L. Miller,
		\newblock \emph{Refraction of high-frequency waves density by sharp interfaces and
	  					semiclassical measures at the boundary},
		\newblock J. Math. Pures Appl., \textbf{79} (2000), 227--269.

\bibitem{MoRu08} 
		\newblock M. Motamed and O. Runborg,
		\newblock \emph{Taylor expansion and discretization errors in Gaussian beam  superposition}, 
		\newblock Wave Motion, \textbf{47} (2010), 421--439.

\bibitem{Norris2}
		\newblock A.N. Norris,
		\newblock \emph{Elastic {G}aussian wave packets in isotropic media},
		\newblock Acta Mech., \textbf{71} (1988), 95--114.

\bibitem{PaRh} 
		\newblock G. Papanicolaou and L. Ryzhik,
		\newblock \emph{Waves and {T}ransport},
		\newblock in ``Hyperbolic Equations and Frequency Interactions",
						 IAS/Park City Math. Ser., 5, Amer. Math. Soc., (1999), 305--382.


\bibitem{PaUr96} 
		\newblock  T. Paul and A. Uribe,
		\newblock \emph{On the pointwise behavior of semi-classical measures},
		\newblock Comm. Math. Phys., \textbf{175} (1996), 229--258.


\bibitem{Pulvirenti} 
		\newblock M. Pulvirenti,
		\newblock \emph{Semiclassical expansion of {W}igner functions},
		\newblock J. Math. Phys., \textbf{47} (2006), 52103--52114.

\bibitem{QiYi2010}
		\newblock  J. Qian and L. Ying,
		\newblock \emph{Fast multiscale {G}aussian wavepacket transforms and multiscale {G}aussian beams for the wave equation},
		\newblock Multiscale Model. Simul., \textbf{} (to appear).

                      
\bibitem{Ralston82} 
		\newblock J. Ralston,
		\newblock \emph{Gaussian beams and the propagation of singularities},
		\newblock in ``Studies in partial differential equations", MAA Stud. Math., 23, Math. Assoc. America, (1982), 206--248.
		



\bibitem{Robinson} 
		\newblock S.L. Robinson,
		\newblock \emph{Semiclassical mechanics for time-dependent {W}igner functions},
		\newblock J. Math. Phys., \textbf{34} (1993), 2185--2205.

\bibitem{Tanushev08} 
		\newblock N.M. Tanushev,
		\newblock \emph{Superpositions and higher order {G}aussian beams},
		\newblock Commun. Math. Sci., \textbf{6} (2008), 449--475.

\bibitem{TaEnTs09} 
		\newblock N.M. Tanushev, B. Engquist and R. Tsai,
		\newblock \emph{Gaussian beam decomposition of high frequency wave fields},
		\newblock J. Comput. Phys., \textbf{228} (2009), 8856--8871.

\bibitem{TaQiRa} 
		\newblock  N.M. Tanushev, J. Qian and J.V. Ralston,
		\newblock \emph{Mountain waves and {G}aussian beams},
		\newblock Multiscale Model. Simul., \textbf{6} (2007), 688--709.

\bibitem{Tartar} 
		\newblock L. Tartar,
		\newblock \emph{$H$-measures, a new approach for studying homogenization, oscillations
  						and concentration effects in partial differential equations},
		\newblock Proc. Roy. Soc. Edinburgh Sect. A, \textbf{115} (1990), 193--230.

\bibitem{Wigner}
		\newblock E. Wigner,
		\newblock \emph{On the quantum correction for thermodynamic equilibrium},
		\newblock Phys. Rev., \textbf{40} (1932), 749--759.

\bibitem{Wilkinson} 
		\newblock M. Wilkinson,
		\newblock \emph{A semiclassical sum rule for matrix elements of classically chaotic
  						systems},
		\newblock J. Phys. A, \textbf{20} (1987), 2415--2423.

\bibitem{Zelditch} 
		\newblock S. Zelditch,
		\newblock \emph{Uniform distribution of eigenfunctions on compact hyperbolic
						  surfaces},
		\newblock Duke Math. J., \textbf{55} (1987), 919--941.    	\end{thebibliography}
\end{document}